\documentclass[a4paper,12pt]{amsart}

 \usepackage{hyperref}

\usepackage{microtype}
 \emergencystretch=1.5em

\usepackage{amssymb,amsmath,amsfonts,amsthm,bm,latexsym,amscd,verbatim,url,nicefrac,stmaryrd,enumerate,colonequals,dsfont,appendix}
\usepackage[all]{xy}
\usepackage{times}
\usepackage{graphicx}
\usepackage{fullpage}

{\bf}{\it}
\newtheorem*{thm*}{Theorem}
\newtheorem{thm}{Theorem}[section]{\bf}{\it}
\newtheorem{prop}[thm]{Proposition}
\newtheorem{lemma}[thm]{Lemma}
\newtheorem{cor}[thm]{Corollary}
\newtheorem{conj}[thm]{Conjecture}
\theoremstyle{definition}
\newtheorem{dfn}[thm]{Definition}

\theoremstyle{remark}
\newtheorem{rmk}[thm]{Remark}
\theoremstyle{remark}
\newtheorem{exm}[thm]{Example}
\newtheorem{assu}[thm]{Assumption}

\newcommand{\A}{\mathbb{A}}
\newcommand{\B}{\mathbb{B}}

\newcommand{\F}{\mathbb{F}}

\newcommand{\HH}{\mathbb{H}}
\newcommand{\LL}{\mathbb{L}}
\newcommand{\N}{\mathbb{N}}
\newcommand{\Q}{\mathbb{Q}}
\newcommand{\RR}{\mathbb{R}}
\newcommand{\R}{\mathbb{R}}
\newcommand{\T}{\mathbb{T}}
\newcommand{\Z}{\mathbb{Z}}

\newcommand{\cat}{\mathbf{C}}
\newcommand{\catD}{\mathbf{D}}

\newcommand{\ra}{\rightarrow}
\newcommand{\mcB}{\mathcal{B}}

\newcommand{\mcF}{\mathcal{F}}

\newcommand{\mcO}{\mathcal{O}}
\newcommand{\mcU}{\mathcal{U}}
\newcommand{\mcV}{\mathcal{V}}

\newcommand{\adj}[4]{#1\negmedspace: #2\rightleftarrows #3:\negmedspace #4}

\DeclareMathOperator{\Aff}{Aff}

\DeclareMathOperator{\Aut}{Aut}

\DeclareMathOperator{\car}{char}

\DeclareMathOperator{\colim}{colim}
\DeclareMathOperator{\ct}{ct}
\DeclareMathOperator{\Cor}{Cor}

\DeclareMathOperator{\eff}{eff}
\DeclareMathOperator{\et}{\acute{e}t}

\DeclareMathOperator{\Et}{Et}

\DeclareMathOperator{\fh}{fh}

\DeclareMathOperator{\fhet}{fh\acute{e}t}

\DeclareMathOperator{\Frob}{Frob}
\DeclareMathOperator{\Frobet}{Frob\acute{e}t}

\DeclareMathOperator{\Hom}{Hom}

\DeclareMathOperator{\id}{id}

\DeclareMathOperator{\Nis}{Nis}
\DeclareMathOperator{\Nor}{Nor}

\DeclareMathOperator{\Perf}{Perf}

\DeclareMathOperator{\qfh}{qfh}
\DeclareMathOperator{\quiet}{quiet}
\DeclareMathOperator{\red}{red}

\DeclareMathOperator{\Rig}{Rig}
\DeclareMathOperator{\RigCor}{RigCor}
\DeclareMathOperator{\RigNor}{RigNor}
\DeclareMathOperator{\RigSm}{RigSm}

\DeclareMathOperator{\Sm}{Sm}
\DeclareMathOperator{\Sp}{Sp}
\DeclareMathOperator{\Spa}{Spa}
\DeclareMathOperator{\Spec}{Spec}

\DeclareMathOperator{\tr}{tr}

\DeclareMathOperator{\Ch}{\bf{Ch}}

\DeclareMathOperator{\DA}{\bf{DA}}
\DeclareMathOperator{\DM}{\bf{DM}}

\DeclareMathOperator{\RigDA}{\bf{RigDA}}
\DeclareMathOperator{\RigDM}{\bf{RigDM}}

\DeclareMathOperator{\Psh}{\bf{Psh}}
\DeclareMathOperator{\PST}{\bf{PST}}

\DeclareMathOperator{\Sh}{\bf{Sh}}

\DeclareMathOperator{\Ho}{Ho}

\begin{document}
\title{Effective motives with and without transfers in characteristic $p$}
\author{Alberto Vezzani}
\address{LAGA - Universit\'e Paris 13 (UMR 7539), 99 av. Jean-Baptiste Cl\'ement, 93430 Villetaneuse, France}

\email{vezzani@math.univ-paris13.fr}

\begin{abstract}
 We prove the equivalence between the category $\RigDM_{\et}^{\eff}(K,\Q)$ of effective motives of rigid analytic varieties over a perfect complete non-archimedean field $K$ and the category $\RigDA_{\Frobet}^{\eff}(K,\Q)$ which is obtained by localizing the category of motives without transfers $\RigDA_{\et}^{\eff}(K,\Q)$ over purely inseparable maps. In particular, we obtain an equivalence between $\RigDM_{\et}^{\eff}(K,\Q)$ and  $\RigDA_{\et}^{\eff}(K,\Q)$ in the characteristic $0$ case and an equivalence between $\DM_{\et}^{\eff}(K,\Q)$ and $\DA_{\Frobet}^{\eff}(K,\Q)$ of motives of algebraic varieties over a perfect field $K$. We also show a relative and a stable version of the main statement.
\end{abstract}

\maketitle

\tableofcontents

\section{Introduction}

Morel and Voevodsky in \cite{mv-99}  introduced the derived category of effective motives over a base $B$ which, in the abelian context with coefficients in a ring $\Lambda$ and with respect to the \'etale topology, is denoted by $\DA^{\eff}_{\et}(B,\Lambda)$. It is obtained as the homotopy category of the model category $\Ch\Psh(\Sm/B,\Lambda)$ of complexes of presheaves of $\Lambda$-modules over the category of smooth varieties over $B$, after a localization with respect to \'etale-local maps (giving rise to \'etale descent in homology) and  projection maps $\A^1_X\ra X$ (giving rise to the homotopy-invariance of homology). Voevodsky in  \cite{voe-tri},  \cite{mvw} also defined   the category of \emph{motives with transfers} $\DM^{\eff}_{\et}(B,\Lambda)$ using analogous constructions starting from the category $\Ch\PST(\Sm/B,\Lambda)$ of complexes of presheaves \emph{ with transfers} over $\Sm/B$ i.e. with extra functoriality with respect to maps which are finite and surjective. 
Both categories of motives can be stabilized, by formally inverting the Tate twist functor $\Lambda(1)$ in a model-categorical sense, giving rise to the categories of stable motives with and without transfers $\DM_{\et}(B,\Lambda)$ and $\DA_{\et}(B,\Lambda)$  respectively.

There exists a natural adjoint pair between the category of motives without and with transfers which is induced by the functor $a_{\tr}$ of ``adjoining transfers'' and its right adjoint $o_{\tr}$ of ``forgetting transfers''. Different authors have proved interesting results on the comparison between the two categories $\DA^{\eff}_{\et}(B,\Lambda)$ and $\DM^{\eff}_{\et}(B,\Lambda)$ induced by this adjunction. 
Morel in \cite{morel-rat} proved the equivalence between the stable categories $\DA_{\et}(B,\Lambda)$ and $\DM_{\et}(B,\Lambda)$ in case $\Lambda$ is a $\Q$-algebra and $B$ is the spectrum of a perfect field, by means of algebraic $K$-theory.  
Cisinski and Deglise in \cite{cd} generalized this fact to the case of a $\Q$-algebra $\Lambda $ and a base $B$ that is of finite dimension, noetherian, excellent and geometrically unibranch.  
Later, Ayoub  (see \cite[Theorem B.1]{ayoub-h1}) gave a simplified  proof of this equivalence for a normal basis $B$ in characteristic $0$ and a coefficient ring $\Lambda$ over $\Q$ that also works for the effective categories. In \cite{ayoub-etale} the same author proved the equivalence between the stable categories of motives with and without transfers for a more general ring of coefficients $\Lambda$, under some technical assumptions on the base $B$ (see \cite[Theorem B.1]{ayoub-etale}). 

The purpose of this paper is to give a generalization of the effective result of Ayoub \cite[Theorem B.1]{ayoub-h1}. We prove an equivalence between the effective categories of motives with rational coefficients for a  normal base $B$ over a perfect field $K$ of arbitrary characteristic. Admittedly, in order to reach this equivalence in characteristic $p$ we need to consider a perfect base $B^{\Perf}$ and invert extra maps in $\DA^{\eff}_{\et}(B^{\Perf},\Q)$ namely the purely inseparable morphisms, or equivalently the relative Frobenius maps.  This procedure can also be interpreted as a localization with respect to a finer topology, that we will call the $\Frobet$-topology. The associated homotopy category will be denoted by $\DA^{\eff}_{\Frobet}(B^{\Perf},\Q)$.

We remark that the approach \emph{without transfers} is much more convenient when computing morphisms, and it is the most natural over a general base. On the other hand, Voevodsky proved a series of useful theorems for the category of motives \emph{with transfers} over a field  (say, the Cancellation theorem \cite{voe-canc} or the homotopy invariance of cohomology \cite[Proposition 3.1.11]{voe-tri}) which are fundamental for developing the theory. Being able to switch between the two definitions via a canonical equivalence is  then useful when dealing with motives, and has been used intensively in the literature (see \cite{ayoub-ICM} for an overview). This article shows that one can finally do so also for effective motives in positive characteristic.

All our statements will be given in the setting of rigid analytic varieties instead of algebraic varieties. The reason is twofold: on the one hand one can deduce immediately the statements on algebraic motives by considering a trivially valued field, on the other hand comparison theorems for motives of rigid analytic varieties $\RigDA^{\eff}_{\et}(B,\Lambda)$ and $\RigDM^{\eff}_{\et}(B,\Lambda)$ are equally useful for some purposes. For example, the result in characteristic $0$ is mentioned and used in \cite[Section 2.2]{ayoub-conj}. Also, this equivalence  in case $B$ is the spectrum of a perfect field of arbitrary characteristic plays a crucial role in \cite{vezz-fw} and actually constitutes the main motivation of this work.  
For the theory of rigid analytic spaces over non-archimedean fields, we refer to \cite{BGR}.

The main theorem of the paper is the following (Theorem \ref{DA=DMp-app}):
\begin{thm*}
	Let $\Lambda$ be a $\Q$-algebra and let $B$ be a  normal rigid  variety over a perfect, complete non-archimedean field $K$. The functor $a_{tr}$ induces an equivalence of triangulated categories:
	\[
	{\LL a_{tr}}\colon{\RigDA^{\eff}_{\Frobet}}(B^{\Perf},\Lambda )\cong{\RigDM^{\eff}_{\et}}(B^{\Perf},\Lambda).
	\]
\end{thm*}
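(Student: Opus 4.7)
The plan is to extend Ayoub's argument for the characteristic-zero case \cite[Theorem B.1]{ayoub-h1} by inverting the relative Frobenius at the appropriate step. The adjoint pair $(a_{tr},o_{tr})$ on presheaves descends to a derived adjunction $(\LL a_{tr},\RR o_{tr})$ between the localized motivic categories, and the proof splits into three parts: (i) checking that $\RR o_{tr}$ takes values in the $\Frobet$-local subcategory so that the adjunction is well-defined, (ii) proving that the unit map is an equivalence on compact generators, and (iii) deducing essential surjectivity from compact generation and from the fact that $\LL a_{tr}\Lambda(X)\simeq\Lambda_{tr}(X)$ for every smooth $X/B^{\Perf}$.

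For (i), the point is that every $\Lambda$-linear presheaf with transfers $F$ automatically has $\Frobet$-descent. Any purely inseparable finite surjection $f\colon Y\ra X$ between normal smooth rigid varieties has degree $p^n$ for some $n$, and on $F$ both $\tr_f\circ f^*$ and $f^*\circ\tr_f$ equal multiplication by $p^n$. The first identity is the usual degree formula; the second follows because, as $f$ is a universal homeomorphism, the cycle $[Y\times_X Y]$ equals $p^n[\Delta_Y]$, and this cycle represents the composition $\gamma_f^t\circ\gamma_f$ of graph correspondences. Since $\Lambda$ is a $\Q$-algebra, $f^*$ is an isomorphism with inverse $p^{-n}\tr_f$, as desired.

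Step (ii) is the crux. By compact generation one reduces to checking that the unit $\Lambda(X)\ra\Lambda_{tr}(X)$ is an equivalence in $\RigDA^{\eff}_{\Frobet}(B^{\Perf},\Lambda)$ for every smooth rigid $X/B^{\Perf}$. This in turn reduces to a local decomposition: for every normal smooth $Y/B^{\Perf}$ and every integral $Z\subset Y\times X$ finite surjective over $Y$, one seeks a $\Frobet$-hypercover of $Y$ along which $Z$ pulls back to a rational linear combination of graphs of morphisms to $X$. To produce it, one writes the function field extension $k(Y)\subset k(Z)$ as $k(Y)\subset L\subset k(Z)$ with $k(Y)\subset L$ separable and $L\subset k(Z)$ purely inseparable of degree $p^n$. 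A sufficiently high iterate of the relative Frobenius $Y^{(1/p^n)}\ra Y$, which is a $\Frobet$-cover (well-defined because $B^{\Perf}$ is perfect and $Y/B^{\Perf}$ is smooth), trivializes the purely inseparable part, and a further étale cover splits the separable part, with multiplicities absorbed by the rational trace.

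The principal obstacle will be passing from this pointwise decomposition of correspondences to an honest equivalence in the $\A^1$-localized derived category. One must verify that the Čech complex associated with the $\Frobet$-hypercovers constructed above is acyclic in positive degrees after $\A^1$-localization and étale sheafification; this combines the strict homotopy invariance of rational presheaves with transfers over normal bases (a Voevodsky-type theorem in the rigid analytic setting) with descent for the $\Frobet$-topology, both of which are expected to be developed earlier in the paper. Once (ii) is in place, the counit $\LL a_{tr}\RR o_{tr}\Lambda_{tr}(X)\ra\Lambda_{tr}(X)$ is seen to be an equivalence by the triangle identity applied to the generators, yielding essential surjectivity and completing the proof.
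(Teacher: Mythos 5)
Your proposal takes a genuinely different route from the paper, and, as written, it leaves the hardest point unresolved. The paper does not argue directly about the unit of the adjunction or about decomposing individual correspondences after hypercovers. Instead, following Ayoub's strategy from the characteristic-zero case, it introduces an auxiliary category $\catD^{\fh}_{\et,\B^1}(\RigNor/B)$ of $\fh$-(\'etale)-$\B^1$-local complexes on normal varieties over $B$, and shows that \emph{both} $\RigDA^{\eff}_{\Frobet}(B^{\Perf})$ and $\RigDM^{\eff}_{\et}(B^{\Perf})$ embed fully faithfully into it (via $\LL i^*$ from Proposition \ref{B.2} plus the $\fh$-locality result of Proposition \ref{arefhlocal}, and via $\LL j^*$, respectively), and that $\LL a_{\tr}$ is obtained by composing with these embeddings. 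Full faithfulness is read off directly; essential surjectivity then follows from the observation that $\LL a_{\tr}$ sends compact generators to compact generators (Proposition \ref{step1a}). The whole point of this $\fh$-sheaf detour is to avoid any need for strict $\B^1$-homotopy invariance of rational presheaves with transfers.

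That is precisely the ingredient your step (ii) requires but does not supply. You openly flag the ``principal obstacle'': acyclity of the \v{C}ech complex after $\B^1$-localization and \'etale sheafification, to be deduced from ``strict homotopy invariance of rational presheaves with transfers over normal bases (a Voevodsky-type theorem in the rigid analytic setting)'' which you expect ``to be developed earlier in the paper.'' It is not: no such theorem appears in this paper, and proving it in the rigid analytic setting is not a routine adaptation. Voevodsky's strict homotopy invariance is already delicate over a perfect field and depends on specific geometry (e.g.\ presentations of smooth pairs and Nisnevich excision in a form adapted to $\A^1$-contractions); its rigid analogue with $\B^1$ in place of $\A^1$ is not established here or in the cited references. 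Without it, the passage from a pointwise splitting of a correspondence into rational combinations of graphs (which is fine: your observation that a $\Frob$-cover $Y^{(-n)}\ra Y$ kills the purely inseparable part of the residue field extension, and that $\Lambda\in\Q\text{-alg}$ absorbs separable multiplicities, is correct) to an equivalence in the localized derived category is a non-sequitur. Your step (i) is sound and corresponds to the uncontroversial part of the paper's Proposition \ref{step1a}, and your appeal to compact generation for essential surjectivity is standard; but the proof as proposed collapses at step (ii). If you want to keep the ``check the unit on generators'' plan, you would need to substitute the $\fh$-locality argument (stratification, locality axiom, and the $G$-invariants trick of Proposition \ref{arefhlocal}) for the strict-homotopy-invariance step, which is precisely what the paper does.

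Two further minor points: in step (i) the relevant direction for the paper is that $\LL a_{\tr}$ factors through the $\Frob$-localization (Proposition \ref{step1a}), which follows simply from $X^{(-1)}\ra X$ being invertible already in $\RigCor/B^{\Perf}$; your cycle-theoretic argument $[Y\times_XY]=p^n[\Delta_Y]$ proves the equivalent dual statement and is correct but heavier than needed. And note the notation $Y^{(1/p^n)}$ is misleading here: the $\Frob$-cover is the twist $Y^{(-n)}\ra Y$ over the \emph{fixed} perfect base $B^{\Perf}$, not a perfection of $Y$.
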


The article is organized as follows. In Section \ref{frobtop} we introduce the $\Frobet$-topology on normal varieties and we prove some general properties it satisfies. In Section \ref{frobmot} we define the categories of motives that we are interested in, as well as other categories of motives which play an auxiliary role in the proof of the main result. In Section \ref{DADMsec} we finally outline the proof of the equivalence above.

\section{The {Frob}-topology}\label{frobtop}

We first define a topology on normal rigid analytic varieties over a field $K$. Along our work, we will always assume the following hypothesis.

\begin{assu}
	We let $K$ be a  perfect  field  which is 
	complete with respect to a non-archimedean norm. 
\end{assu}

Unless otherwise stated, we will use the term ``variety'' to indicate a  
separated rigid analytic variety over $K$ (see \cite[Chapter 9]{BGR}). 

\begin{dfn}
	A map $f\colon Y\ra X$ of varieties  over $K$ is called a \emph{$\Frob$-cover} if it is finite, surjective and for every affinoid $U$ in $X$ the affinoid inverse image $V=f^{-1}(U)$ 
	is such that the induced map of rings $\mcO(U)\ra \mcO(V)$ is radicial.
\end{dfn}

\begin{rmk}
	By \cite[Corollary IV.18.12.11]{EGAIV4} a morphism of schemes is  finite, surjective and radicial if and only if it is a finite universal homeomorphism. We can remark that the same holds true for rigid analytic varieties. That said, we will not use this characterization in this text. 
\end{rmk}

If $\car K=p$ and  $X$ is a  variety over $K$ then the absolute $n$-th Frobenius map $X\ra X$ given by the elevation to the $p^n$-th power, factors over a map $X\ra X^{(n)}$ where we denote by $X^{(n)}$ the base change of $X$ by the absolute $n$-th Frobenius map $K\ra K$.  We denote by $\Phi^{(n)}$ the map $X\ra X^{(n)}$ and we call it the \emph{relative $n$-th Frobenius}. Since $K$ is perfect, $X^{(n)}$ is isomorphic to $X$ endowed with the structure map $X\ra \Spa K\stackrel{\Phi^{-n}}{\ra}\Spa K$ and the relative $n$-th Frobenius is isomorphic to the absolute $n$-th Frobenius of $X$ over $\F_p$. We can also  define $X^{(n)}$ for negative $n$  to be  the base change of $X$ over  the  map $\Phi^{n}\colon K\ra K$ which is again isomorphic to $X$ endowed with the structure map $X\ra \Spa K\stackrel{\Phi^{-n}}{\ra}\Spa K$. The Frobenius map induces a morphism $X^{(-1)}\ra X$ and the collection of maps $\{X^{(-1)}\ra X\}$ 
defines a coverage (see for example \cite[Definition C.2.1.1]{elephant1}).

In case $\car K=0$ we also define $X^{(n)}$ to be $X$ and the maps $\Phi\colon X^{(n-1)}\ra X^{(n)}$ to be the identity maps for all $n\in \Z$.

\begin{prop}\label{frobuniv}
	Let $Y\ra X$ be a $\Frob$-cover between  normal quasi-compact varieties over  $K$. There exists an integer $n$ and a map $X^{(-n)}\ra Y$ such that  the composite map $X^{(-n)}\ra Y\ra X$ coincides with $\Phi^{n}$ and the composite map $Y\ra X\ra Y^{(n)}$ coincides with $\Phi^{n}$. 
\end{prop}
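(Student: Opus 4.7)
The plan is to reduce the statement to a purely algebraic question about a finite radicial extension of normal affinoid $K$-algebras, and to extract an inverse ``up to Frobenius'' from normality. The case $\car K = 0$ is immediate: with the convention $\Phi = \id$, the claim reduces to $f$ being an isomorphism; in characteristic zero a finite radicial morphism has purely inseparable, hence trivial, residue field extensions, making $f$ finite and birational, and therefore an isomorphism onto the normal variety $X$. From now on assume $\car K = p$, and set $A = \mcO(X)$, $B = \mcO(Y)$.

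Since $X, Y$ are reduced (being normal) and $f$ is surjective, $f^{\ast} \colon A \to B$ is injective, and I regard it as an inclusion; $B$ is finite over $A$, and radiciality identifies the connected components of $Y$ with those of $X$. Treating each component separately and taking the largest exponent obtained, I may assume $X$ (and hence $Y$) connected, so that $A, B$ are normal domains. The induced function-field extension $L := \Frac A \subseteq M := \Frac B$ is finite purely inseparable, so $M^{p^n} \subseteq L$ for some $n$. For any $b \in B$ the element $b^{p^n}$ then lies in $L$ and is integral over $A$, so by normality it lies in $A$. This yields a ring homomorphism $\phi \colon B \to A$, $b \mapsto b^{p^n}$, and a corresponding morphism $g \colon X \to Y$ of underlying rigid analytic varieties.

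To promote $g$ to a $K$-morphism $X^{(-n)} \to Y$, recall that the $K$-structure on $\mcO(X^{(-n)}) = A$ is, by definition, the composite $K \xrightarrow{\Phi^n} K \to A$, so $k \in K$ acts by multiplication by $s_X(k)^{p^n}$; using $K$-linearity of $f^{\ast}$ (so $s_Y(k) = s_X(k)$ inside $B$) one checks $\phi(s_Y(k) b) = s_X(k)^{p^n} \phi(b)$, which is exactly the required twisted action, so $\phi$ is $K$-linear into $\mcO(X^{(-n)})$ and $g$ lifts to a $K$-morphism $X^{(-n)} \to Y$. Finally, on underlying rings $\phi \circ f^{\ast}$ and $f^{\ast} \circ \phi$ are the $p^n$-th power maps on $A$ and $B$ respectively, i.e.\ the absolute Frobenius morphisms of $X$ and $Y$; under the standard identifications of $X^{(-n)}$ and $Y^{(n)}$ with $X$ and $Y$ as underlying rigid spaces, these are precisely the relative Frobenii $\Phi^n \colon X^{(-n)} \to X$ and $\Phi^n \colon Y \to Y^{(n)}$.

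The main obstacle I anticipate is not the existence of $\phi$, which falls out of the function-field / normality argument essentially for free, but rather the bookkeeping: correctly identifying the source and target of $g$ as Frobenius twists and recognising the two composites as the relative Frobenii $\Phi^n$ require some care with the twisted $K$-structures, which are invisible on the underlying rigid analytic spaces.
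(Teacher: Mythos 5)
Your proof is correct, and it takes a genuinely different route from the paper's. The paper reduces to affine schemes and then immediately invokes Koll\'ar's Proposition~6.6 (a more general statement about finite universal homeomorphisms) to produce a map $h\colon X\to Y^{(n)}$ with $h\circ f=\Phi^{(n)}_Y$, and then extracts the \emph{other} composite identity from this one via a somewhat non-obvious categorical trick: $f$ is an epimorphism of normal varieties, and one cancels $f$ from the relation $f h f^{(n)}=\Phi^{(n)}_Y f^{(n)}=f\Phi^{(n)}_X$. You instead unfold what Koll\'ar's lemma says in exactly this situation (normal domains over a perfect field, purely inseparable function-field extension) and prove it directly: $M^{p^n}\subseteq L$ gives $b^{p^n}\in L$, integrality and integral closedness of $A$ give $b^{p^n}\in A$, and the resulting ring map $\phi\colon B\to A$, $b\mapsto b^{p^n}$, is essentially the whole proposition. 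One genuine advantage of your approach is symmetry: because $\phi\circ f^{\ast}$ and $f^{\ast}\circ\phi$ are visibly both $p^n$-th power maps, the two composite identities fall out simultaneously with no need for the epimorphism argument; the other advantage is that it is self-contained, at the price of redoing the basic work behind Koll\'ar's statement. Your bookkeeping with the twisted $K$-structures is also correct (using $s_X(k)^{p^n}=s_X(k^{p^n})$ and $s_Y=f^{\ast}\circ s_X$), as is the reduction to connected components via the fact that a finite surjective radicial map is a homeomorphism, and the passage to a uniform $n$ over finitely many components.
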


\begin{proof}
	Let $f\colon Y\ra X$ a $\Frob$-cover of affinoid normal schemes over $K$. We can consider the induced map of $K$-algebras and apply \cite[Proposition 6.6]{kollar} to conclude that it exists an integer $n$ and a map $h\colon X\ra Y^{(n)}$ such that the composite map $Y\ra X\ra Y^{(n)}$ coincides with the relative $n$-th Frobenius. This factorization is also canonical, and therefore can be generalized to the situation in which $X$ and $Y$ are not necessarily affinoid.
	
	We also remark that the map $Y\ra X$ is an epimorphism (in the categorical sense) of normal varieties. 
	From the equalities $fhf^{(n)}=\Phi^{(n)}_Yf^{(n)}=f\Phi^{(n)}_X $ we then conclude that the composite map $X\ra Y^{(n)}\ra X^{(n)}$ coincides with the $n$-relative Frobenius. This proves the  claim. 
\end{proof}

\begin{dfn}
	Let $B$ be a normal variety over $K$. We define $\RigSm/B $ to be the category of  quasi-compact varieties which are smooth over $B$.  We denote by $\tau_{\et}$ the \'etale topology. 
\end{dfn}

\begin{dfn}
	Let $B$ be a normal variety over $K$. We define $\RigNor/B $ to be the category of quasi-compact normal varieties over $B$. 
	\begin{itemize}
		\item We denote by $\tau_{\Frob}$ the topology on $\RigNor/B$ induced by $\Frob$-covers. 
		\item We denote by $\tau_{\et}$ the \'etale topology. 
		\item We denote by $\tau_{\Frobet}$ the topology generated by $\tau_{\Frob}$ and  $\tau_{\et}$.
		\item We denote by $\tau_{\fh}$ the topology generated by covering families $\{f_i\colon X_i\ra X\}_{i\in I}$ such that $I$ is finite, and the induced map $\bigsqcup f_i\colon\bigsqcup_{i\in I} X_i\ra X$ is finite and surjective. 
		\item We denote by $\tau_{\fhet}$ the topology generated by $\tau_{\fh}$ and  $\tau_{\et}$. 
	\end{itemize}
\end{dfn}

\begin{rmk}
	The $\Frobet$ topology is denoted by $\quiet$ (quasi-\'etale) in \cite[Section 5]{fj} and the $\fhet$-topology is often denoted by $\qfh$ (see \cite{voe-h}). We stick to the notation $\fhet$ in order to be consistent with \cite{ayoub-h1}.
\end{rmk}

We are not imposing any additivity condition on the $\Frob$-topology, i.e. the families $\{X_i\ra\bigsqcup_{i\in I} X_i\}_{i\in I}$  are not $\Frob$-covers. This does not interfere much with our theory  since we will mostly be interested in the $\Frobet$-topology, with respect to which such families are covering families.

\begin{rmk}\label{sametopos}
	We are ultimately interested in considering the $\Frobet$-topology on $\RigNor/B$. As any object $X\in \RigNor/B$ is locally affinoid, we can restrict to considering the full subcategory $\Aff\!\Nor/B$ of $\RigNor/B$ made of \emph{affinoid} varieties that are smooth over $B$ since it induces an equivalent \'etale (and $\Frobet$) topos. In proofs we will then, sometimes tacitly, assume that the objects of $\RigNor/B$ and $\RigSm/B$ are affinoid, without loss of generality. For the same reason, one can harmlessly drop the condition on quasi-compactness for objects in $\RigNor/B$ and $\RigSm/B$ without changing the associated topoi.
\end{rmk}

\begin{rmk}
	The $\fh$-topology is obviously finer that the $\Frob$-topology, which is the trivial topology in case $\car K=0$.
\end{rmk}

\begin{rmk}
	The category of normal affinoid varieties is not closed under fiber products, and the $\fh$-coverings do not define a Grothendieck pretopology. Nonetheless, they define a coverage which is enough to have a convenient description of the topology they generate (see for example \cite[Section C.2.1]{elephant1}).  
\end{rmk}

\begin{rmk}\label{pseudoG}
	A particular example of $\fh$-covers is given by \emph{pseudo-Galois covers} which are finite, surjective maps $f\colon Y\ra X$ of normal integral affinoid varieties such that the field extension $K(Y)\ra K(X)$ is obtained as a composition of a Galois extension and a finite, purely inseparable extension. The Galois group $G$ associated to the extension coincides with $\Aut(Y/X)$.  
	As shown in \cite[Corollary 2.2.5]{ayoub-rig}, a presheaf $\mcF$ on $\Aff\!\Nor/B$ with values in a complete and cocomplete category is an $\fh$-sheaf if and only if the two following conditions are satisfied.
	\begin{enumerate}
		\item For every finite set $\{X_i\}_{i\in I}$ of objects in $\RigNor/B$ it holds $\mcF(\bigsqcup_{i\in I} X_i)\cong\prod_{i\in I}\mcF(X_i)$.
		\item For every pseudo-Galois covering $Y\ra X$ with associated Galois group $G$ the map $\mcF(X)\ra\mcF(Y)^G$ is invertible.
	\end{enumerate}
\end{rmk}

\begin{dfn}\label{RigSmBPerf}
	Let $B$ be a normal variety over $K$. 
	\begin{itemize}
		\item We denote by $\RigSm/B^{\Perf}$ the 2-limit category $2-\varinjlim_n\RigSm/B^{(-n)}$ with respect to the  functors $\RigSm/B^{(-n-1)}\ra\RigSm/B^{(-n)}$ induced by the pullback along the map $B^{(-n-1)}\ra B^{(-n)}$. More explicitly, it is equivalent to the category $\cat_B[S^{-1}]$ where $\cat_B$ is the category whose objects are pairs $(X,-n)$ with $n\in\N$ and $X\in\RigSm/B^{(-n)}$   and morphisms $\cat_B((X,-n),(X',-n'))$ are maps $f\colon X\ra X'$ forming  commutative squares
		$$\xymatrix{
			X\ar[d]\ar[r]^-{f} & X'\ar[d]\\
			B^{(-n)}\ar[r]^-{\Phi}&B^{(-n')}
		}$$
		and where $S$ is the class of canonical maps $(X'\times_{B^{(-n')}}B^{(-n)},-n)\ra(X',-n')$ for each $X\in\RigSm/B^{(-n')}$ and $n\geq n'$ (see \cite[Definition VI.6.3]{EGAIV2}).
		\item We say that a map $(X,-n)\ra(X',-n')$ of $\RigSm/B^{\Perf}$ is a\emph{ $\Frob$-cover} if the map $X\ra X'$ is a $\Frob$-cover. We denote by $\tau_{\Frob}$ the topology on $\RigSm/B^{\Perf}$ induced by $\Frob$-covers. 
		\item  		We denote by $\tau_{\et}$ the topology on $\RigSm/B^{\Perf}$ generated by the \'etale coverings on each category $\RigSm/B^{(-n)}$. It defines the ``inverse limit'' topology on $\RigSm/B^{\Perf}$ according to  \cite[Definition VI.8.2.5]{SGAIV2}.
		\item We denote by $\tau_{\Frobet}$ the topology generated by $\tau_{\Frob}$ and $\tau_{\et}$.
	\end{itemize}
\end{dfn}

We now investigate some properties of the $\Frob$-topology.

\begin{prop}\label{frobcov}
	Let $B$ be a normal variety over $K$. 
	\begin{itemize}
		\item A presheaf $\mcF$ on $\RigNor/B$   is a ${\Frob}$-sheaf if and only if $\mcF(X^{(-1)})\cong\mcF(X)$ for all  objects $X$ in $\RigNor/B$.
		\item A presheaf $\mcF$ on  $\RigSm/B^{\Perf}$ is a ${\Frob}$-sheaf if and only if   $\mcF(X^{(-1)},-n-1)\cong\mcF(X,-n)$ for all  objects $(X,-n)$ in $\RigSm/B^{\Perf}$.
	\end{itemize}
\end{prop}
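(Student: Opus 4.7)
The strategy is to reduce both items to proving that $\mcF(X) \to \mcF(Y)$ is bijective for every $\Frob$-cover $f \colon Y \to X$, and then to invoke Proposition \ref{frobuniv} to express that bijectivity entirely in terms of Frobenius morphisms. The key observation underlying the reduction is that a $\Frob$-cover, being a finite universal homeomorphism, is a monomorphism when tested against reduced objects: the diagonal $\Delta \colon Y \to Y \times_X Y$ is a closed immersion (by separatedness of finite morphisms) that is set-theoretically surjective (by radiciality), so $(Y \times_X Y)_{\red} \cong Y$. For any $W$ in $\RigNor/B$ (automatically reduced), two morphisms $a, b \colon W \to Y$ with $fa = fb$ yield a map $W \to Y \times_X Y$ factoring through the reduced subscheme, hence through $\Delta$, forcing $a = b$. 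Since $\Frob$-covers are singleton families in the coverage, the matching family condition is thereby vacuous, and $\mcF$ is a $\Frob$-sheaf if and only if $\mcF(X) \to \mcF(Y)$ is bijective for every $\Frob$-cover.

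Necessity in both items follows immediately: $\Phi \colon X^{(-1)} \to X$ is itself a $\Frob$-cover, so the sheaf condition directly yields the Frobenius-induced isomorphism $\mcF(X) \cong \mcF(X^{(-1)})$.

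For sufficiency in the first item, suppose $\mcF(\Phi) \colon \mcF(X) \to \mcF(X^{(-1)})$ is an isomorphism for every $X \in \RigNor/B$. Given an arbitrary $\Frob$-cover $f \colon Y \to X$, Proposition \ref{frobuniv} provides an integer $n$, a morphism $g \colon X^{(-n)} \to Y$ with $fg = \Phi^n$, and a morphism $h \colon X \to Y^{(n)}$ with $hf = \Phi^n$. Iterating the hypothesis renders both $\mcF(\Phi^n) \colon \mcF(X) \to \mcF(X^{(-n)})$ and $\mcF(\Phi^n) \colon \mcF(Y^{(n)}) \to \mcF(Y)$ bijective. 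The factorization $\mcF(X) \to \mcF(Y) \to \mcF(X^{(-n)})$ of the first exhibits $\mcF(X) \to \mcF(Y)$ as a split monomorphism; the factorization $\mcF(Y^{(n)}) \to \mcF(X) \to \mcF(Y)$ of the second exhibits it as a split epimorphism. Hence $\mcF(X) \to \mcF(Y)$ is bijective, as required.

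The second item is handled by the same argument, applied at a finite level where any $\Frob$-cover in $\RigSm/B^{\Perf}$ can be represented. Moreover, the canonical Frobenius morphism $(X^{(-1)}, -n-1) \to (X, -n)$ coincides with the canonical pullback map in the localizing class $S$ used to construct $\RigSm/B^{\Perf} = \cat_B[S^{-1}]$, so every presheaf on $\RigSm/B^{\Perf}$ automatically inverts it; thus the Frobenius-iso condition holds for free, and the split-mono/split-epi argument shows that in fact every presheaf on $\RigSm/B^{\Perf}$ is a $\Frob$-sheaf. The principal technical point is the monomorphism reduction in the first paragraph: this requires care because fiber products may fail to exist inside $\RigNor/B$, so the diagonal argument must be phrased in the ambient category of rigid varieties while still producing a monomorphism within $\RigNor/B$ and legitimately rendering the coverage's matching condition vacuous.
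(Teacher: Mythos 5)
Your proof is correct and follows essentially the same outline as the paper's, but it is noticeably more self-contained. Where the paper cites external results from Johnstone's book (Lemma~C.2.1.6, Lemma~C.2.1.7, and Lemma~C.2.1.3 of \cite{elephant1}) to (a) identify the $\Frob$-topology with the coverage generated by $\{X^{(-1)}\to X\}$ and (b) reduce the sheaf condition for a monic cover to a simple isomorphism, you unpack both steps by hand. Your diagonal argument (radicial $\Rightarrow$ the diagonal is a surjective closed immersion, hence $(Y\times_X Y)_{\red}\cong Y$, hence $\Frob$-covers are monomorphisms against reduced test objects) is exactly what justifies (b). Your split-mono/split-epi argument deduced from Proposition~\ref{frobuniv} is exactly what justifies (a). This is the right decomposition, and your care about where the fiber product $Y\times_X Y$ lives is appropriate, since $\RigNor/B$ is not closed under fiber products. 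The small thing to be explicit about is that the hypothesis ``$\mcF(X^{(-1)})\cong\mcF(X)$'' must be read as ``the restriction map $\mcF(\Phi)$ is bijective,'' which you do correctly in the sufficiency direction.

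Your remark on the second bullet goes slightly beyond what the paper does (the paper merely says the two statements are analogous and treats only $\RigNor/B$). You observe that the canonical Frobenius map $(X^{(-1)},-n-1)\to(X,-n)$ already lies in the class $S$ inverted to form $\RigSm/B^{\Perf}=\cat_B[S^{-1}]$, because $X\times_{B^{(-n)}}B^{(-n-1)}=X^{(-1)}$ by the pasting lemma for cartesian squares applied to the tower of Frobenii over $\Spa K$. That identification is correct, and combined with your split argument it shows that every $\Frob$-cover in $\RigSm/B^{\Perf}$ is an isomorphism and hence every presheaf on $\RigSm/B^{\Perf}$ satisfies the $\Frob$-sheaf condition. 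That is consistent with (and in fact subsumed by) the statement of the proposition; you are essentially noting that its second half has no nontrivial content once the filtered-colimit construction is unwound, whereas the first half, on $\RigNor/B$, is where the real content lies. This is a legitimate observation, not an error, though it is worth flagging that it is not how the paper frames things.
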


\begin{proof}
	The two statements are analogous and  we only prove the claim for $\RigNor/B$. By means of \cite[Lemma C.2.1.6 and Lemma C.2.1.7]{elephant1} the topology generated by maps $f\colon Y\ra X$ which factor a power of Frobenius $X^{(-n)}\ra X$ is the same as the one generated by the coverage $X^{(-1)}\ra X$. Using Proposition \ref{frobuniv}, we conclude that the $\Frob$-topology coincides with the one generated by the coverage $\{X^{(-1)}\ra X\}$. 
	Since the Frobenius map is a monomorphism of normal varieties, the sheaf condition associated to the coverage $X^{(-1)}\ra X$ is simply the one of the statement by \cite[Lemma 2.1.3]{elephant1}.
\end{proof}

\begin{cor}\label{perfsm}
	Let $B$ be a normal variety over $K$.
	\begin{itemize}
		\item The class $\Phi$ of maps $\{ X^{(-r)}\ra X\}_{r\in\N,X\in\RigNor/B}$ admits calculus of fractions, and its saturation consists of  $\Frob$-covers. In particular, the continuous map \[(\RigNor/B,\Frob)\ra\RigNor/B[\Phi^{-1}]\] defines an equivalence of topoi.
		\item  The class $\Phi$ of maps $\{ (X^{(-r)},-n-r)\ra (X,-n)\}_{r\in\N,(X,n)\in\RigSm/B^{\Perf}}$ admits calculus of fractions, and its saturation consists of $\Frob$-covers. In particular, the continuous map  \[(\RigSm/B^{\Perf},\Frob)\ra\RigSm/B^{\Perf}[\Phi^{-1}]\] defines an equivalence of topoi.
	\end{itemize}
\end{cor}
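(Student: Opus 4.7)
The plan is to verify a calculus of fractions for $\Phi$ directly from the Gabriel--Zisman axioms, identify its saturation using Proposition \ref{frobuniv}, and then deduce the equivalence of topoi via Proposition \ref{frobcov}. The two items in the corollary are proved by formally the same argument, so I would focus on the case of $\RigNor/B$; the case of $\RigSm/B^{\Perf}$ is identical with the additional bookkeeping of the index $n$.

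For the calculus of fractions, closure under composition and the presence of identities are immediate from $\Phi^r \circ \Phi^s = \Phi^{r+s}$. The Ore-type axiom reduces to the naturality of the Frobenius: given a map $g\colon Y \to X$ and an arrow $\Phi^r\colon X^{(-r)} \to X$ in $\Phi$, the square
\[
\xymatrix{
Y^{(-r)} \ar[r]^{g^{(-r)}} \ar[d]_{\Phi^r} & X^{(-r)} \ar[d]^{\Phi^r} \\
Y \ar[r]_{g} & X
}
\]
provides the required completion, with the left leg in $\Phi$. The cancellation axiom exploits the fact, already used in the proof of Proposition \ref{frobuniv}, that the Frobenius map is an epimorphism of normal varieties: if $f_1 \circ \Phi^r = f_2 \circ \Phi^r$, then $f_1 = f_2$ without any further witness being needed.

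The heart of the statement is the identification of the saturation of $\Phi$ with the class of $\Frob$-covers. One inclusion is tautological, since every Frobenius map is itself a $\Frob$-cover. Conversely, given a $\Frob$-cover $f\colon Y \to X$, Proposition \ref{frobuniv} produces maps whose compositions with $f$ on either side equal Frobenius maps in $\Phi$, from which it follows that $f$ becomes invertible in $\RigNor/B[\Phi^{-1}]$. For the equivalence of topoi, the calculus of fractions guarantees that presheaves on $\RigNor/B[\Phi^{-1}]$ are exactly those presheaves on $\RigNor/B$ that send every map in $\Phi$ to an isomorphism; by Proposition \ref{frobcov} this coincides with the $\Frob$-sheaf condition, and the continuous map of sites in the statement therefore induces the desired equivalence. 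The main obstacle I anticipate is this saturation step, since the class of $\Frob$-covers is a priori much larger than $\Phi$ and one must produce an explicit inverse in the localized category for each such cover; the control is supplied precisely by Proposition \ref{frobuniv}, and the normality hypothesis is essential for this control to hold.
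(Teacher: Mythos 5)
Your proposal is correct and takes essentially the same route as the paper, which likewise dispatches the calculus-of-fractions verification as ``an easy check'', invokes Proposition \ref{frobuniv} for the characterization of the saturation, and derives the equivalence of topoi from Proposition \ref{frobcov}. A minor point of consistency: you set up the Ore square in the right-handed form (spans $X \leftarrow X^{(-r)} \to Y$) but verify cancellation via the epimorphism property of Frobenius, which is the left-handed form; since Frobenius is also a monomorphism of normal varieties, as used in the proof of Proposition \ref{frobcov}, both go through, but one should fix a single handedness when writing this out.
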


\begin{proof}We only prove the first claim. 
	The fact that $\Phi$ admits calculus of fractions is an easy check, and the characterization of its saturation follows from Proposition \ref{frobuniv}. The sheaf condition for a presheaf $\mcF$ with respect to the $\Frob$-topology is simply $\mcF(X^{(-1)})\cong\mcF(X)$ by Corollary \ref{frobcov} hence the last claim.
\end{proof}

\begin{rmk}\label{addtransfer}
	We follow the notations introduced in Definition \ref{RigSmBPerf}. Any pullback of a finite, surjective radicial map between normal algebraic varieties is also finite, surjective and radicial. This can be generalized to  rigid analytic varieties, given the explicit description of the pull-back of a finite map (see for example \cite[Lemma 1.4.5]{huber}). 
	In particular, if $B$ is a normal variety, the maps in the class $S$  are invertible in $\RigNor/B[\Phi^{-1}]$. The functor $\cat_B\ra\RigNor/B[\Phi^{-1}]$ defined by mapping $(X,-n)$ to $X$ factors through a functor $\RigSm/B^{\Perf}\ra\RigNor/B[\Phi^{-1}]$. In particular, there is a functor $\RigSm/B^{\Perf}[\Phi^{-1}]\ra \RigNor/B[\Phi^{-1}]$ defined by sending $(X,-n)$ to $X$ hence, by  Corollary \ref{perfsm}, there is a functor $\Sh_{\Frob}(\RigSm/B^{\Perf})\ra\Sh_{\Frob}(\RigNor/B)$.
\end{rmk}

\begin{rmk}\label{e*}
	If $e\colon B'\ra B$ is a finite map of normal varieties, any \'etale hypercover $\mcU\ra B'$ has a refinement by a hypercover $\mcU'$  obtained by pullback from  an \'etale hypercover $\mcV$ of $B$ (see for example \cite[Tag 04DL]{stacks-project}).  In particular, the functor $e_*\colon\Psh(\RigSm/B')\ra\Psh(\RigSm/B)$ commutes with the functor $a_{\et}$ of $\et$-sheafification. The same holds true for the functor $e_*\colon\Psh(\RigSm/B'^{\Perf})\ra\Psh(\RigSm/B^{\Perf})$.
\end{rmk}

From now on, we fix a commutative ring $\Lambda$ and work  with $\Lambda$-enriched categories. In particular, the term ``presheaf'' should be understood as ``presheaf of $\Lambda$-modules'' and similarly for the term ``sheaf''. It follows that the presheaf $\Lambda(X)$ represented by an object $X$ of a category $\cat$ sends an object $Y$ of $\cat$ to the free $\Lambda$-module $\Lambda\Hom(Y,X)$. 

\begin{assu}
	Unless otherwise stated, we assume from now on that $\Lambda$ is a $\Q$-algebra and we omit it from the notations.
\end{assu}

The following facts are immediate, and will also be useful afterwards.

\begin{prop}\label{afrobet}
	Let $B$ be a normal variety over $K$. 
	\begin{itemize}
		\item If $\mcF$ is an \'etale sheaf on $\RigSm/B^{\Perf}$ [resp. on $\RigNor/B$] then $a_{\Frob}\mcF$ is a $\Frobet$-sheaf.
		\item If $\mcF$ is a $\Frob$-sheaf on $\RigSm/B^{\Perf}$ [resp. on $\RigNor/B$] then $a_{\et}\mcF$ is a $\Frobet$-sheaf.
	\end{itemize}
\end{prop}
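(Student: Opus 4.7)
\emph{Reduction.} Since $a_{\Frob}\mcF$ is a $\Frob$-sheaf and $a_{\et}\mcF$ is an $\et$-sheaf by construction, and since the $\Frobet$-topology is generated by the $\Frob$- and $\et$-topologies, the two bullets reduce respectively to showing (1) that $a_{\Frob}\mcF$ is an $\et$-sheaf whenever $\mcF$ is one, and (2) that $a_{\et}\mcF$ is a $\Frob$-sheaf whenever $\mcF$ is one. The arguments are parallel for the two base categories $\RigNor/B$ and $\RigSm/B^{\Perf}$; I will phrase them for $\RigNor/B$.

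\emph{Part (1).} By the calculus of fractions of Corollary \ref{perfsm}, the $\Frob$-sheafification can be computed as the filtered colimit
\[
a_{\Frob}\mcF(X) \cong \colim_{r \in \N} \mcF(X^{(-r)}),
\]
with transition maps induced by Frobenius. For any $\et$-cover $\{U_i \to X\}$, the pullback $\{U_i^{(-r)} \to X^{(-r)}\}$ is again an $\et$-cover, and fiber products commute with the Frobenius base change, so the $\et$-sheaf condition for $\mcF$ at each level $r$ yields an exact sequence of $\Q$-modules. Since filtered colimits of $\Q$-modules are exact, and affinoid $\et$-covers can be refined to finite ones so that the relevant products are finite limits, the sheaf condition is inherited by $a_{\Frob}\mcF$.

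\emph{Part (2).} The Frobenius $\Phi \colon X^{(-1)} \to X$ is a finite universal homeomorphism of normal varieties, so base change along $\Phi$ induces an equivalence of small étale sites $X_{\et} \simeq X^{(-1)}_{\et}$, sending $V \to X$ to $V^{(-1)} \to X^{(-1)}$. By the $\Frob$-sheaf hypothesis and Proposition \ref{frobcov}, $\mcF(V^{(-1)}) \cong \mcF(V)$ functorially in $V$, so the restrictions of $\mcF$ to the two small étale sites correspond under this equivalence. Since $\et$-sheafification is a topos-intrinsic operation, the values agree:
\[
a_{\et}\mcF(X^{(-1)}) \cong a_{\et}\mcF(X),
\]
which by Proposition \ref{frobcov} is exactly the condition for $a_{\et}\mcF$ to be a $\Frob$-sheaf.

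\emph{Main obstacle.} The essential technical input is the topological invariance of the small étale site under the finite universal homeomorphism $\Phi$ in the rigid analytic context (the rigid analog of SGA~4, Expos\'e~VIII, Th\'eor\`eme~1.1). Granted this, both parts reduce to routine filtered-colimit manipulations combined with exactness of filtered colimits of $\Q$-modules and compatibility of fiber products with Frobenius base change.
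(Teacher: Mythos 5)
Your two parts divide along exactly the same lines as the paper's proof, and your Part~(1) is essentially identical to it: both compute $a_{\Frob}\mcF(X)$ as the filtered colimit $\varinjlim_n\mcF(X^{(-n)})$ via Corollary~\ref{perfsm}/Proposition~\ref{frobuniv}, observe that \'etale covers and fiber products are stable under the Frobenius base change $X\mapsto X^{(-n)}$, and pass the sheaf condition through the colimit using exactness of filtered colimits. (One small slip: exactness of filtered colimits is a property of abelian groups and has nothing to do with $\Lambda$ being a $\Q$-algebra, so that hypothesis is not what's doing the work here.)

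Your Part~(2) is where you genuinely diverge from the paper. You invoke the topological invariance of the small \'etale topos under the universal homeomorphism $\Phi\colon X^{(-1)}\ra X$ — the rigid analytic analog of SGA~4, Exp.~VIII, Th.~1.1 — to identify $X_{\et}\simeq X^{(-1)}_{\et}$ via $V\mapsto V^{(-1)}$, and then observe that a $\Frob$-sheaf restricts to matching sheaves on the two sides, so that their \'etale sheafifications agree section-by-section. The paper instead avoids topological invariance entirely: it uses the weaker Remark~\ref{e*}, that \'etale hypercovers of $X^{(-1)}$ can be refined by pullbacks of hypercovers of $X$ because $\Phi$ is a finite morphism of normal varieties, writes out the explicit colimit formula
$a_{\et}\mcF(X^{(-1)})=\varinjlim_{\mcU\ra X}\ker\left(\mcF(\mcU'_0)\ra\mcF(\mcU'_1)\right)$,
and applies the $\Frob$-sheaf property termwise. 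Both arguments reach the conclusion $a_{\et}\mcF(X^{(-1)})\cong a_{\et}\mcF(X)$, which by Proposition~\ref{frobcov} is the $\Frob$-sheaf condition. Your route is cleaner but leans on a deeper theorem whose rigid analytic version you would need to cite or prove; the paper's route is more elementary and is also reused later (Proposition~\ref{fhet}, Proposition~\ref{Ri*}), which is likely why the author preferred it. You correctly identified this invariance as the one nontrivial ingredient — just be aware that the paper shows it can be traded for a finite-descent refinement statement that is easier to establish.
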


\begin{proof}
	We only prove the claims for $\RigNor/B$. First, suppose that $\mcF$ is an \'etale sheaf. 
	By Proposition \ref{frobuniv}, we obtain that $a_{\Frob}\mcF(X)=\varinjlim_n\mcF(X^{(-n)})$. Whenever $U\ra X$ is \'etale, then $U\times_XX^{(-n)}\cong U^{(-n)}$ and $U^{(-n)}\times_{X^{(-n)}}U^{(-n)}\cong(U\times_XU)^{(-n)}$ so that the following diagram is exact
	\[
	0\ra\mcF(X^{(-n)})\ra\mcF(U^{(-n)})\ra\mcF((U\times_X U)^{(-n)}).
	\]
	The first claim the follows by taking the limit over $n$.
	
	We now prove the second claim. Suppose $\mcF$ is a $\Frob$-sheaf. For any \'etale covering $\mcU\ra X$ we indicate with $\mcU'$ the associated covering of $X^{(-1)}$ obtained by pullback. From Remark \ref{e*} one can compute the sections of $a_{\et}\mcF(X^{(-1)})$ with the formula
	\[
	a_{\et}\mcF(X^{(-1)})=\varinjlim_{\mcU\ra X}\ker\left(\mcF(\mcU'_0)\ra\mcF(\mcU'_1)\right)
	\]
	where $\mcU\ra X$ varies among \v{C}ech covers of $X$. Since $\mcF$ is a $\Frob$-sheaf, then $\mcF(U'_0)\cong\mcF(U_0)$ and $\mcF(U_1')\cong\mcF(U_1)$. The formula above then implies
	\[
	a_{\et}\mcF(X^{(-1)})=\varinjlim_{\mcU\ra X}\ker\left(\mcF(\mcU_0)\ra\mcF(\mcU_1)\right)=a_{\et}\mcF(X)
	\]
	proving the claim.
\end{proof}

\begin{prop}\label{fhet}
	Let $B$ be a normal variety over $K$. 
	If $\mcF$ is a $\fh$-sheaf on $\RigNor/B$ then $a_{\et}\mcF$ is a $\fhet$-sheaf.
\end{prop}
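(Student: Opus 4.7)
The plan is to verify the two conditions of Remark \ref{pseudoG} for $a_{\et}\mcF$. The first, that $a_{\et}\mcF$ turns finite disjoint unions into products, is immediate: étale sheafification is a left exact functor and therefore preserves finite products, so it inherits this property from $\mcF$. The real content is checking pseudo-Galois descent, so I focus on that.

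Let $e\colon Y\to X$ be a pseudo-Galois cover with associated group $G$; I must show that $a_{\et}\mcF(X)\to a_{\et}\mcF(Y)^G$ is an isomorphism. The strategy is the one already used in the proof of Proposition \ref{afrobet}: compute both sides via the standard formula
\[ a_{\et}\mcF(T)=\varinjlim_{\mcU\to T}\ker\bigl(\mcF(\mcU_0)\to\mcF(\mcU_1)\bigr) \]
over étale hypercovers. The map $e$ induces a pullback functor from étale hypercovers of $X$ to $G$-equivariant étale hypercovers of $Y$. By (the proof of) Remark \ref{e*}, every étale hypercover of $Y$ admits a refinement of the form $\mcU\times_X Y$ for some étale hypercover $\mcU\to X$, and such refinements are cofinal among $G$-equivariant ones. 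Because $G$ is finite, taking $G$-invariants commutes with both the filtered colimit and the kernel, so matters reduce to showing that for every term,
\[ \mcF(\mcU_n)\;\stackrel{\sim}{\longrightarrow}\;\mcF(\mcU_n\times_X Y)^G. \]

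To prove this termwise identity I will use that $\mcU_n\to X$ is étale, hence smooth, which guarantees that $\mcU_n\times_X Y$ is again smooth over $Y$ and in particular normal, so $\mcF$ applied to it makes sense. The map $\mcU_n\times_X Y\to\mcU_n$ is finite surjective and carries a $G$-action over $\mcU_n$. Decomposing $\mcU_n$ into connected components and, over each component $C$, decomposing $C\times_X Y$ into its $G$-orbits, one identifies $C\times_X Y\to C$ as a disjoint union of pseudo-Galois covers whose Galois groups are the stabilizer subgroups of $G$, with $G$ permuting the orbits transitively. The required identity then drops out from the two $\fh$-sheaf conditions of Remark \ref{pseudoG} applied to $\mcF$: the disjoint-union property handles the decomposition into orbits, and pseudo-Galois descent handles each orbit separately. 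Combining the termwise isomorphisms yields the desired isomorphism of kernels of filtered colimits, concluding the verification.

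The main obstacle is the bookkeeping for the base change $\mcU_n\times_X Y\to\mcU_n$: one needs the standard but slightly technical fact that base change of a pseudo-Galois cover along an étale (and normal) map produces, on each connected component, a disjoint union of pseudo-Galois covers on which $G$ acts by permutation of the orbits composed with the Galois action of their stabilizers. Given the characterization of pseudo-Galois via a fraction field extension factorization (Galois composed with purely inseparable), this reduces to the analogous description of $K(\mcU_n)\otimes_{K(X)} K(Y)$, but writing the argument cleanly for not necessarily connected $\mcU_n$ is where a little care is required.
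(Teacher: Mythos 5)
Your proof follows the paper's argument: both compute $a_{\et}\mcF(X')$ via the colimit over étale hypercovers of $X$ using Remark \ref{e*}, commute $G$-invariants past the colimit and the kernel, and reduce to the termwise identity $\mcF(\mcU_n')^G\cong\mcF(\mcU_n)$, which you then unwind via connected components, orbits, and stabilizers — a step the paper leaves implicit but which does require exactly the decomposition you describe. The approach and conclusion are correct and essentially identical to the paper's.
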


\begin{proof}
	Let $f\colon X'\ra X$ be a pseudo-Galois cover in $\Aff\!\Nor/B$  with associated group $G$. In light of Remark \ref{pseudoG}, we need to show that $a_{\et}\mcF(X)\cong a_{\et}\mcF(X')^G$. For any \'etale covering $\mcU\ra X$ we indicate with $\mcU'$ the associated covering of $X'$ obtained by pullback. From Remark \ref{e*} one can compute the sections of $a_{\et}\mcF(X')$ with the formula
	\[
	a_{\et}\mcF(X')=\varinjlim_{\mcU\ra X}\ker\left(\mcF(\mcU'_0)\ra\mcF(\mcU'_1)\right)
	\]
	where $\mcU\ra X$ varies among \v{C}ech covers of $X$. Taking the $G$-invariants is an exact functor as $\Lambda$ is a $\Q$-algebra and when applied to the formula above it yields
	\[
	a_{\et}\mcF(X')^G=\varinjlim_{\mcU\ra X}\ker\left(\mcF(\mcU'_0)^G\ra\mcF(\mcU'_1)^G\right)=\varinjlim_{\mcU\ra X}\ker\left(\mcF(\mcU_0)\ra\mcF(\mcU_1)\right)=a_{\et}\mcF(X)
	\]
	as wanted.
\end{proof}

\begin{prop}\label{oexact}
	Let $B$ be a  normal variety over $K$. The canonical inclusions 
	\[
	\begin{aligned}
	o_{\Frob}\colon&\Sh_{\Frob}(\RigNor/B)\ra\Psh(\RigNor/B)\\
	o_{\Frob}\colon&\Sh_{\Frob}(\RigSm/B^{\Perf})\ra\Psh(\RigSm/B^{\Perf})\\
	o_{\fh}\colon&\Sh_{\fh}(\RigNor/B)\ra\Psh(\RigNor/B)\\
	\end{aligned}
	\] 
	are exact.
\end{prop}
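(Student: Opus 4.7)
The plan is to exploit the fact that the inclusion $o$ of sheaves in presheaves is always a right adjoint and hence left exact, so one only has to verify that the cokernel (computed in $\Psh$) of a morphism between sheaves is again a sheaf. Equivalently: the subcategory of sheaves is closed under cokernels in $\Psh$. This reduces exactness of each inclusion to a pointwise verification of the relevant sheaf condition on cokernels, for which I can use the concrete characterizations already established.

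For the two $\Frob$-sheaf cases, I would appeal to Proposition \ref{frobcov}, which reformulates the $\Frob$-sheaf condition as the purely pointwise requirement $\mcF(X^{(-1)})\cong\mcF(X)$ (resp.\ $\mcF(X^{(-1)},-n-1)\cong\mcF(X,-n)$). Given a morphism $\mcF_1\to\mcF_2$ of $\Frob$-sheaves, its presheaf cokernel $\mcH$ satisfies
\[
\mcH(X^{(-1)})=\operatorname{coker}\bigl(\mcF_1(X^{(-1)})\to\mcF_2(X^{(-1)})\bigr)\cong\operatorname{coker}\bigl(\mcF_1(X)\to\mcF_2(X)\bigr)=\mcH(X),
\]
so $\mcH$ is again a $\Frob$-sheaf. (Alternatively, one can invoke Corollary \ref{perfsm} to identify $\Sh_{\Frob}$ with presheaves on the localized category $\RigNor/B[\Phi^{-1}]$; under this identification $o_{\Frob}$ is restriction along the localization functor, and restriction of presheaves is exact since (co)limits of presheaves are computed pointwise.)

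For the $\fh$-case I would use the characterization in Remark \ref{pseudoG}: $\mcF$ is an $\fh$-sheaf iff it is additive and satisfies pseudo-Galois descent, i.e.\ $\mcF(X)\cong\mcF(Y)^{G}$ for every pseudo-Galois cover $Y\to X$ with group $G$. Given a morphism $\mcF_1\to\mcF_2$ of $\fh$-sheaves with presheaf cokernel $\mcH$, additivity of $\mcH$ is immediate from commutation of cokernels with finite products of $\Lambda$-modules. For descent, the key point is that taking $G$-invariants is exact on $\Lambda$-modules because $|G|$ is invertible in the $\Q$-algebra $\Lambda$; hence
\[
\mcH(Y)^{G}=\operatorname{coker}\bigl(\mcF_1(Y)\to\mcF_2(Y)\bigr)^{G}\cong\operatorname{coker}\bigl(\mcF_1(Y)^{G}\to\mcF_2(Y)^{G}\bigr)\cong\operatorname{coker}\bigl(\mcF_1(X)\to\mcF_2(X)\bigr)=\mcH(X),
\]
so $\mcH$ is an $\fh$-sheaf.

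The main potential obstacle is verifying that the $\fh$-sheaf condition is preserved under pointwise cokernels; this is exactly where the hypothesis $\Q\subseteq\Lambda$ is used, via the exactness of $(-)^{G}$ for finite $G$. The $\Frob$-cases, by contrast, are essentially formal once one knows the sheaf condition is pointwise, so no assumption on $\Lambda$ is needed there.
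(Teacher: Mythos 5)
Your proposal is correct and follows essentially the same route as the paper's own (very terse) proof: reduce exactness of the right adjoint $o$ to the preservation of cokernels, then verify the sheaf condition on pointwise cokernels using the characterization of $\Frob$-sheaves from Proposition \ref{frobcov} and of $\fh$-sheaves from Remark \ref{pseudoG}, with the $\fh$-case hinging on the exactness of $G$-invariants over the $\Q$-algebra $\Lambda$. You have simply spelled out the details that the paper dismisses as ``obvious'' for the $\Frob$-cases, and your observation that those cases do not actually need $\Q\subseteq\Lambda$ is accurate.
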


\begin{proof}
	In light of Proposition \ref{frobcov} the statements about $o_{\Frob}$ are obvious. Since $\Lambda$ is a $\Q$-algebra, the functor of $G$-invariants from $\Lambda[G]$-modules to $\Lambda$-modules is exact. The third claim then follows from Remark \ref{pseudoG}.
\end{proof}

We now investigate the functors of the topoi introduced above induced by a map of varieties $B'\ra B$. 

\begin{prop}\label{quillenpairsS}
	Let $f\colon B'\ra B$ be a map of normal varieties over $K$. 
	\begin{itemize}
		\item Composition with $f$ defines a functor $f_\sharp$ from normal varieties over $B'$ to normal varieties over $B$ which induces the following adjoint pair
		\[\adj{ f_\sharp}{\Ch\Sh_{\Frobet}(\RigNor/B')}{\Ch\Sh_{\Frobet}(\RigNor/B)} { f^*}\]
		\item The base change over $f$ defines functors  $ f^{(-n)*}$ from  smooth varieties over $B^{(n)}$ to  smooth varieties over $B'^{(n)}$  which induce the following adjoint pair
		\[\adj{f^*}{\Ch\Sh_{\Frobet}(\RigSm/B^{\Perf})}{\Ch\Sh_{\Frobet}(\RigSm/B'^{\Perf})}{f_*}\]
		\item If $f$ is a $\Frob$-cover,  the functors above are equivalences of categories.
		\item If $f$ is a  smooth map,  the composition with $f$ defines functors $f^{(-n)}_\sharp$ from smooth varieties over $B'^{(-n)}$ to smooth varieties over $B^{(-n)}$ which induce the following adjoint pair
		\[\adj{f_\sharp}{\Ch\Sh_{\Frobet}(\RigSm/B'^{\Perf})}{\Ch\Sh_{\Frobet}(\RigSm/B^{\Perf})}{f^*}\]
	\end{itemize}
\end{prop}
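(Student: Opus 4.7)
The argument follows a uniform pattern for each of the four parts. For each bullet I would first produce the stated functor at the level of the underlying categories, then pass to presheaves by left Kan extension (which automatically supplies the claimed adjoint pair), verify that the categorical functor is continuous for $\tau_{\Frobet}$ so that the right adjoint preserves sheaves, and finally observe that the resulting adjunction on chain complexes is Quillen for the injective $\Frobet$-local model structure. In each case the right adjoint either is restriction (so visibly preserves sectionwise monomorphisms and local quasi-isomorphisms) or is the pushforward part of a morphism of topoi, in which case the left adjoint is exact.

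Concretely, in the first bullet, composition with $f$ preserves normality (the underlying variety is unchanged) and sends both \'etale and $\Frob$-covers to covers of the same type, giving $(f_\sharp, f^*)$ with $f^*$ being restriction. In the second bullet, pullback along $f$ preserves smoothness, is continuous for $\tau_{\Frobet}$ (base change preserves \'etale covers together with finiteness, surjectivity and radiciality), and commutes up to canonical isomorphism with the transition functors of Definition \ref{RigSmBPerf}; it therefore descends to a continuous functor $\RigSm/B^{\Perf} \to \RigSm/B'^{\Perf}$ and yields $(f^*, f_*)$. In the fourth bullet, composition with a smooth $f$ preserves smoothness on each $\RigSm/B^{(-n)}$, giving $(f_\sharp, f^*)$; the right adjoint $f^*$ here coincides with the one from the second bullet, because at the category level base change is right adjoint to composition, and restriction along a left adjoint agrees with left Kan extension along its right adjoint.

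The main obstacle is the third bullet, the equivalence when $f$ is a $\Frob$-cover. The plan is to apply Proposition \ref{frobuniv} to produce maps $B^{(-n)} \to B'$ and $B \to B'^{(n)}$ whose composites with $f$ on either side equal $\Phi^n$, so that $f$ becomes invertible in the localization of $\RigNor$ at the class $\Phi$ of Frobenius twists. By Corollary \ref{perfsm} this localization presents the $\Frob$-topos, so $f$ induces an equivalence between the $\Frob$-topoi of $\RigNor/B$ and $\RigNor/B'$, and analogously between $\Sh_{\Frob}(\RigSm/B^{\Perf})$ and $\Sh_{\Frob}(\RigSm/B'^{\Perf})$. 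To upgrade from $\Frob$- to $\Frobet$-sheaves, I would invoke Proposition \ref{afrobet}: \'etale sheafification carries $\Frob$-sheaves to $\Frobet$-sheaves and commutes with the adjoint pair in question (which also preserves \'etale covers), so the equivalence on $\Frob$-sheaves lifts to one on $\Frobet$-sheaves. Extending to complexes with the injective model structure is then automatic, since the Quillen equivalence is induced term-wise from an equivalence of the underlying abelian categories of sheaves.
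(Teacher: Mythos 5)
Your proposal is correct and, for the first three bullets, follows essentially the same line as the paper: the first two adjoint pairs are extracted from the continuity of the underlying functors $f_\sharp$ and $f^*$, and the third bullet is reduced via Proposition~\ref{frobuniv} and Corollary~\ref{perfsm} to the fact that a $\Frob$-cover becomes invertible after localizing at the class $\Phi$ of Frobenius twists, so that $f_\sharp\colon\RigNor/B'[\Phi^{-1}]\to\RigNor/B[\Phi^{-1}]$ and $f^*\colon\RigSm/B^{\Perf}[\Phi^{-1}]\to\RigSm/B'^{\Perf}[\Phi^{-1}]$ are equivalences; your added remark about using Proposition~\ref{afrobet} to upgrade from $\Frob$-sheaves to $\Frobet$-sheaves is a detail the paper glosses over and is welcome. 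Where you genuinely diverge is the fourth bullet. The paper's concern there is that it is not obviously convenient to define $f_\sharp$ directly on the localizations $\RigSm/B'^{\Perf}=\cat_{B'}[S'^{-1}]$ and to check its continuity for the $\Frobet$-topology; it therefore switches models, replacing the site $(\RigSm/B^{\Perf},\Frobet)$ by the Morita-equivalent site $(\cat_B[\Phi^{-1}],\et)$ (extending the argument of Corollary~\ref{perfsm}), after which $f_\sharp$ is visibly defined and its continuity is just \'etale continuity. You instead invoke the purely formal identity ``restriction along a left adjoint coincides with left Kan extension along its right adjoint'' so that the right Quillen partner of $f_\sharp$ is literally the $f^*$ already produced in the second bullet. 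This is a legitimate shortcut and is arguably more conceptual, but it leaves implicit two facts which you should record: that the category-level functor $f_\sharp\colon\cat_{B'}\to\cat_B$ carries the localizing class $S'$ into $S$ (this uses that Frobenius twists commute with base change along $f$, i.e.\ $B'^{(-n)}\cong B'\times_B B^{(-n)}$, which is why the paper preferred to work in $\cat_B[\Phi^{-1}]$ where $\Phi\supseteq S$), and that $f_\sharp$ at the category level is continuous for the $\Frobet$-topology, so that restriction along it does land in sheaves. Both hold, so your route works; the paper's route is designed to make these checks disappear at the price of introducing an extra model.
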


\begin{proof}
	We initially remark that the functors $f^{(-n)*}$ induce a functor $f^*\colon \cat_B\ra\cat_{B'}$ where $\cat_B$ is the fibered category introduced in Definition \ref{RigSmBPerf} where we drop the condition of being quasi-compact (see Remark \ref{sametopos}). As  cartesian squares are mapped to cartesian squares, they also induce a functor from smooth varieties over $B^{\Perf}$ to smooth varieties over $B'^{\Perf}$.
	
	The existence of the first two adjoint pairs is then a formal consequence of the continuity of the functors $f_\sharp$ and $f^*$.

	Let now $f$ be a $\Frob$-cover. The functors $f^*\colon\RigSm/B^{\Perf}[\Phi^{-1}]\ra\RigSm/B'^{\Perf}[\Phi^{-1}]$ and $f_\sharp\colon \RigNor/B'[\Phi^{-1}]\ra\RigNor/B[\Phi^{-1}]$ are equivalences, and we conclude the third claim by what proved above and Corollary \ref{perfsm}.
	
	For the  fourth claim, we  use a different model for the $\Frobet$-topos on $\RigSm/B^{\Perf}$.  
	The fibered category  $\cat_B$  can be endowed with the $\Frob$-topology and the $\Frobet$-topology. Following the proof of Corollary \ref{perfsm}, the map $(\cat_B,\Frob)\ra\cat_B[\Phi^{-1}]$ induces an equivalence of topoi. Moreover, the canonical functor $\cat_B[\Phi^{-1}]\ra\RigSm/B^{\Perf}[\Phi^{-1}]$  induces    an equivalence of categories. 
	
	The existence of the last Quillen functor is therefore a formal consequence of the continuity of the functor  $f_\sharp\colon(\cat_{B'}[\Phi^{-1}],\et)\ra (\cat_B[\Phi^{-1}],\et)$.
\end{proof}

\begin{rmk}
	Let $f\colon B'\ra B$ be a map of normal varieties. The image via $f^*$ of the presheaf represented by $(X,-n)$ is the  presheaf represented by $(X\times_BB'^{(-n)},-n)$ and if $f$ is smooth, the image via $f_\sharp$ of the presheaf represented by $(X',-n)$ is the sheaf represented by $(X',-n)$.
\end{rmk}

\section{Rigid motives and Frob-motives}\label{frobmot}

We recall that the ring of coefficients $\Lambda$ is assumed to be a $\Q$-algebra, and that presheaves and sheaves take values in the category of $\Lambda$-modules.

We make extensive use of the theory of model categories and localization, following the approach of Ayoub in \cite{ayoub-th2} and \cite{ayoub-rig}. Fix a  site $(\cat,\tau)$. 
The category of complexes of presheaves $\Ch(\Psh(\cat))$ can be endowed with the \emph{projective model structure} for which weak equivalences are quasi-isomorphisms (maps inducing isomorphisms of homology presheaves) and fibrations are maps $\mcF\ra\mcF'$ such that $\mcF(X)\ra\mcF'(X)$ is a surjection for all $X$ in $\cat$ (cfr \cite[Section 2.3]{hovey} and \cite[Proposition 4.4.16]{ayoub-th2}). 

\begin{rmk}\label{modelcatch}
	If we take $\cat=\{*\}$ we obtain in particular the usual projective model category structure on $\Ch(\Lambda)$ which is cellular and left proper (see for example \cite[Example 4.4.24(2)]{ayoub-th2} and \cite[Proposition 2.3.22]{hovey}). For any $\cat$ the category $\Ch(\Psh(\cat))$ is equivalent to the category of presheaves on $\cat$ with values in $\Ch(\Lambda)$. With this respect, the projective model structure described above  coincides with the one induced by defining weak-equivalences and fibrations point-wise, starting from the projective model structure on $\Ch(\Lambda)$. One could alternatively consider the (Quillen equivalent) \emph{injective model structure} on $\Ch(\Psh(\cat))$ obtained by defining weak-equivalences and cofibrations point-wise (see \cite[Definition 4.4.15]{ayoub-th2}).
\end{rmk}

Also the category of complexes of sheaves $\Ch(\Sh_\tau(\cat))$ can be endowed with the \emph{projective model structure} defined in  \cite[Proposition 4.4.41]{ayoub-th2}. In this structure, weak equivalences are quasi-isomorphisms of complexes of sheaves (maps inducing isomorphisms on the sheaves associated to the homology presheaves).

Just as in \cite{jardine-s},  \cite{mvw}, \cite{mv-99} or \cite{riou}, we  consider the  left Bousfield localization of $\Ch(\Psh(\cat))$ with respect to the topology we select, and  a chosen ``contractible object''. We recall that left Bousfield localizations with respect to a class of maps $S$ (see \cite[Chapter 3]{hirschhorn}) is the universal model categories in which the maps in $S$ become weak equivalences. The existence of such structures is granted only under some technical hypothesis, as shown in \cite[Theorem 4.1.1]{hirschhorn} and \cite[Theorem 4.2.71]{ayoub-th2}.

\begin{prop}\label{locsets}
	Let $(\cat,\tau)$ be a site with finite direct products and let $\cat'$ be a full subcategory of $\cat$  such that every object of $\cat$ has a covering by objects of $\cat'$. Let also $I$ be an object of $\cat'$.
	\begin{enumerate}
		\item The projective model category $\Ch\Psh (\cat)$  admits a  left Bousfield localization $\Ch_{I}\Psh (\cat)$  with respect to the set $S_{I}$ of all maps $\Lambda(I\times X)[i]\ra\Lambda( X)[i]$ as $X$ varies in $\cat$  and $i$ varies in $\Z$. 
		\item The projective model categories $\Ch\Psh (\cat)$ and $\Ch\Psh(\cat')$  admit  left Bousfield localizations  $\Ch_{\tau}\Psh (\cat)$ and $\Ch_{\tau}\Psh(\cat')$ with respect to the class $S_{\tau}$   of   maps $\mcF\ra\mcF'$ inducing isomorphisms on the $\et$-sheaves associated to $H_i(\mcF)$ and $H_i(\mcF')$ for all $i\in\Z$. Moreover, the two localized model categories are Quillen equivalent and the sheafification functor induces a Quillen equivalence to the projective model category  $\Ch\Sh_{\tau}(\cat)$.
		\item The  model categories $\Ch_{\tau}\Psh (\cat)$ and $\Ch_{\tau}\Psh(\cat')$  admit  left Bousfield localizations $\Ch_{\tau,I}\Psh (\cat)$ and $\Ch_{\tau,I}\Psh(\cat')$ with respect to the set $S_{I}$ defined above.  Moreover, the two localized model categories are Quillen equivalent.
	\end{enumerate}
\end{prop}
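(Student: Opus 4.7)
The plan is to reduce everything to the general machinery of left Bousfield localization for combinatorial (cellular) left proper model categories, as developed in \cite[Chapter 3]{hirschhorn} or \cite[Section 4.2]{ayoub-th2}. The base inputs are the projective model structures on $\Ch\Psh(\cat)$, $\Ch\Psh(\cat')$, and $\Ch\Sh_\tau(\cat)$, all of which are left proper and cofibrantly generated by \cite[Section 2.3]{hovey} and \cite[Propositions 4.4.16, 4.4.41]{ayoub-th2}. Once this is granted, each of the three localizations is obtained by verifying that the class of maps to invert is (generated by) a set.

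For part (1), the set $S_I$ is literally indexed by $X\in\cat$ and $i\in\Z$, so the existence of $\Ch_I\Psh(\cat)$ follows immediately from \cite[Theorem 4.1.1]{hirschhorn}, respectively \cite[Theorem 4.2.71]{ayoub-th2}. For part (2), the class $S_\tau$ of $\tau$-local equivalences is a proper class, but it coincides with the saturated class generated by the set $T$ of maps $\Lambda(\mcU_\bullet)\to\Lambda(X)$ where $\mcU_\bullet\to X$ runs over a small cofinal family of bounded $\tau$-hypercovers of objects $X\in\cat$ (resp.\ $X\in\cat'$). Applying the cited existence theorems to $T$ yields $\Ch_\tau\Psh(\cat)$ and $\Ch_\tau\Psh(\cat')$; that the resulting fibrant replacement computes $\tau$-descent is the content of \cite[Proposition 4.4.60]{ayoub-th2}. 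The Quillen equivalence with $\Ch\Sh_\tau(\cat)$ is then given by the sheafification adjunction $a_\tau\dashv o_\tau$: $a_\tau$ sends $T$ to isomorphisms and $o_\tau$ sends quasi-isomorphisms of complexes of sheaves to $S_\tau$, so the derived adjunction becomes an equivalence on homotopy categories.

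For the comparison between $\Ch_\tau\Psh(\cat)$ and $\Ch_\tau\Psh(\cat')$, consider the restriction functor $j^*\colon\Ch\Psh(\cat)\to\Ch\Psh(\cat')$ and its left adjoint $j_!$ (left Kan extension along the inclusion $\cat'\hookrightarrow\cat$). This pair is Quillen for the projective structures, since $j^*$ preserves fibrations and trivial fibrations objectwise. The hypothesis that every object of $\cat$ has a $\tau$-covering by objects of $\cat'$ implies that for any presheaf $\mcF$ on $\cat$, the unit $\mcF\to j_*j^*\mcF$ becomes an isomorphism after $\tau$-sheafification (both sides have the same associated $\tau$-sheaf); symmetrically, the counit $j^*j_!\mcG\to\mcG$ is already the identity. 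It follows that the derived functors of $(j_!,j^*)$ descend to inverse equivalences between $\Ho\Ch_\tau\Psh(\cat')$ and $\Ho\Ch_\tau\Psh(\cat)$, giving the Quillen equivalence.

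For part (3), start from the two Quillen-equivalent model categories constructed in (2) and further left Bousfield localize by the set $S_I$ (well-defined on both sides since $I\in\cat'\subseteq\cat$). Left Bousfield localization of a left proper cellular model category by a set of maps is again left proper cellular, so the existence is formal. The Quillen equivalence is preserved because $j_!\Lambda(I\times X)\simeq\Lambda(I\times X)$ and $j^*\Lambda(I\times X)\simeq \Lambda(I\times X)$ for $X\in\cat'$: the adjunction $(j_!,j^*)$ maps the generating sets $S_I$ into each other up to weak equivalence, so the derived adjunction remains an equivalence on the further localizations. The main technical obstacle is the comparison of $\cat$ and $\cat'$ in (2): one has to verify that the Kan extension along $\cat'\hookrightarrow\cat$ interacts correctly with $\tau$-sheafification, which is precisely where the hypothesis that $\cat'$ generates the $\tau$-topology on $\cat$ is used in an essential way.
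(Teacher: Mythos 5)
Your overall strategy matches the paper's: the paper proves each part by noting that the relevant projective model structures are left proper and cellular, invoking \cite[Theorem 4.1.1]{hirschhorn} (equivalently \cite[Theorem 4.2.71]{ayoub-th2}) for existence, using \cite[Proposition 4.4.32, Lemma 4.4.35]{ayoub-th2} to identify $S_\tau$ with a set-generated localization, and \cite[Corollary 4.4.43, Proposition 4.4.56]{ayoub-th2} for the Quillen equivalences; you are essentially unpacking those citations. A few of your intermediate statements are misphrased, however. Under the adjunction $(j_!,j^*)$ with $j_!$ the left Kan extension along the full inclusion $\cat'\hookrightarrow\cat$, the relevant map for full faithfulness is the \emph{unit} $\mcG\to j^*j_!\mcG$ (an isomorphism since $j$ is fully faithful), not ``the counit $j^*j_!\mcG\to\mcG$'', and the map $\mcF\to j_*j^*\mcF$ you invoke belongs to the other adjunction $(j^*,j_*)$; what must actually be checked for the Quillen equivalence is that the derived counit $\LL j_!\,j^*\mcF\to\mcF$ is a $\tau$-equivalence, which is exactly where the hypothesis that objects of $\cat'$ cover $\cat$ enters. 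Also, $I\times X$ need not lie in $\cat'$, so $j^*\Lambda(I\times X)$ is generally not representable on $\cat'$ and the identification $j^*\Lambda(I\times X)\simeq\Lambda(I\times X)$ does not literally hold; what one should say is that on the $\cat'$-side one localizes by $j^*S_I$, and the Quillen equivalence from part (2) descends to the further localizations because it identifies $j^*S_I$ with $S_I$ up to $\tau$-equivalence. None of these slips break the argument, but they should be tightened.
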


\begin{proof}
	By \cite[Proposition 4.4.16]{ayoub-th2} and Remark \ref{modelcatch} the projective model structures  in the statement are  left proper and cellular. Any such model category admits a left Bousfield localization with respect to a set of maps ( \cite[Theorem 4.1.1]{hirschhorn}) hence the first claim.
	
	For the first part of second claim, it suffices to apply \cite[Proposition 4.4.32, Lemma 4.4.35]{ayoub-th2} showing that the localization over $S_\tau$ is equivalent to a localization over a set of maps. The second part is a restatement of  \cite[Corollary 4.4.43, Proposition 4.4.56]{ayoub-th2}.
	
	Since by  \cite[Proposition 4.4.32]{ayoub-th2} the $\tau$-localization coincides with the Bousfield localization with respect to a set, we conclude by \cite[Theorem 4.2.71]{ayoub-th2} that the  model category $\Ch_{\tau}\Psh (\cat)$ is still left proper and cellular. The last statement then follows from  \cite[Theorem 4.1.1]{hirschhorn} and the second claim.
\end{proof}

In the situation above, we will denote by $S_{(\tau,I)}$ the union of the class $S_{\tau}$ and the set $S_I$.

\begin{rmk}
	A geometrically relevant situation is induced when $I$ is endowed with a multiplication map $\mu\colon I\times I\ra I$ and maps $i_0$ and  $i_1$ from the terminal object to $I$ satisfying the relations of a monoidal object with $0$ as in the definition of an interval object (see \cite[Section 2.3]{mv-99}). Under these hypotheses, we say that the triple $(\cat,\tau,I)$ is a \emph{site with an interval}.
\end{rmk}

\begin{exm}
	The affinoid rigid variety  $\B^1=\Spa K\langle \chi\rangle$ is an interval object with respect to the natural multiplication $\mu$ and maps $i_0$ and  $i_1$ induced by the substitution  $\chi\mapsto0 $ and  $\chi\mapsto1$ respectively. 
\end{exm}

\begin{dfn}
	Let $B$ be a normal variety over $K$. 
	\begin{itemize}
		\item The triangulated homotopy category of $\Ch_{\et,\B^1}\Psh (\RigSm/B)$   will be denoted by $\RigDA_{\et}^{\eff}(B,\Lambda)$. 
		\item The triangulated homotopy category of $\Ch_{\et,\B^1}\Psh (\Rig\Sm/B^{\Perf})$   will be denoted by $\RigDA_{\et}^{\eff}(B^{\Perf},\Lambda)$ and  the one of $\Ch_{\Frobet,\B^1}\Psh (\Rig\Sm/B^{\Perf})$   will be denoted by $\RigDA_{\Frobet}^{\eff}(B^{\Perf},\Lambda)$. 
		\item The triangulated homotopy category of $\Ch_{\Frobet,\B^1}\Psh (\RigNor/B)$   will be denoted by $\catD_{\Frobet,\B^1}(\RigNor/B,\Lambda)$ and the one of  $\Ch_{\fhet,\B^1}\Psh (\RigNor/B)$   will be denoted by  $\catD_{\et,\B^1}^{\fh}(\RigNor/B,\Lambda)$.  
		\item If $\cat$ is one of the categories $\RigSm/B$, $\Rig\Sm/B^{\Perf}$ and $\RigNor/B$ and $\eta\in\{\et,\Frob,\fh,\Frobet,\fhet, \B^1,(\et,\B^1),(\Frobet,\B^1),(\fhet,\B^1)\}$  we say that a map   in  $\Ch\Psh (\cat)$  is a \emph{$\eta$-weak equivalence} if it is a weak equivalence in the model structure   $\Ch_{\eta}\Psh (\cat)$ whenever this makes sense.
		\item  We will omit $\Lambda$ from the notation whenever the context allows it. The image of a variety $X$ in one of these categories will be denoted by $\Lambda(X)$. 
	\end{itemize}
\end{dfn}

We now want to introduce the analogue of the previous definitions for motives with transfers. 
By Remark \ref{addtransfer} the mapping $(X,-n)\mapsto X$ induces a functor $\Sh_{\Frob}(\RigSm/B^{\Perf})\ra\Sh_{\Frob}(\RigNor/B)$. If we compose it with the Yoneda embedding and the functor $a_{\fh}$ of $\fh$-sheafification we obtain a functor
\[
\RigSm/B^{\Perf}\ra\Sh_{\Frob}(\RigSm/B^{\Perf})\ra\Sh_{\fh}(\RigNor/B).
\]

\begin{dfn} 
	Let $B$ be a normal variety over $K$.
	\begin{itemize}
		\item We define the category $\RigCor/B$ as the category whose objects are those of $\RigSm/B$ and whose morphisms $\Hom(X,Y)$ are computed in $\Sh_{\fh}(\RigNor/B)$.
		The category $\Psh(\RigCor/B)$ will be denoted by $\PST(\RigSm/B)$.
		\item We define the category $\RigCor/B^{\Perf}$ as the category whose objects are those of $\RigSm/B^{\Perf}$ and whose morphisms $\Hom(X,Y)$ are computed in $\Sh_{\fh}(\RigNor/B)$.
		The category $\Psh(\RigCor/B^{\Perf})$ will be denoted by $\PST(\RigSm/B^{\Perf})$.
	\end{itemize}
\end{dfn}

We remark that, as $\Lambda$ is a $\Q$-algebra, morphisms $X\ra Y$ of $\RigCor$ admit a more concrete description in terms of \emph{correspondences} defined in \cite[Noltation 2.2.22]{ayoub-rig} and denoted in \cite{ayoub-rig} by $\Cor(X,Y)$. 
We also remark that the inclusions of categories $\RigSm/B\ra\RigCor/B$ and $\RigSm/B^{\Perf}\ra\RigCor/B^{\Perf}$ induce the following adjunctions:
\[
\adj{a_{\tr}}{\Ch\Psh(\RigSm/B)}{\Ch\PST(\RigSm/B)}{o_{\tr}}.
\]
\[
\adj{a_{\tr}}{\Ch\Psh(\RigSm/B^{\Perf})}{\Ch\PST(\RigSm/B^{\Perf})}{o_{\tr}}.
\]

We now define the category of motives with transfers.

\begin{prop}\label{rigdmperfet}
	Let $B$ be a normal variety and $\cat$ be either $\RigSm/B$ or $ \RigSm/B^{\Perf}$. 
	The projective model  category $\Ch\PST (\cat)$   
	admits a left Bousfield localization $\Ch_{\et}\PST (\cat)$   with respect to $S_{\et}$, the class  of  of maps $f$ such that $o_{\tr}(f)$ is a $\et$-weak equivalence. 
	It also admits a further Bousfield localization $\Ch_{\et,\B^1}\PST (\cat)$   with respect to the set  formed by all maps $\Lambda(\B^{1}_X)[i]\ra\Lambda( X)[i]$ by letting $X$ vary in $\cat$  and $i$ vary in $\Z$.
\end{prop}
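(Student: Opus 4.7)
The plan is to deduce both Bousfield localizations from Hirschhorn's theorem \cite[Theorem 4.1.1]{hirschhorn}, which requires that the ambient model category be left proper and cellular and that the class of maps inverted be a \emph{set}. The projective model structure on $\Ch\PST(\cat)$ is built exactly as the one on $\Ch\Psh(\cat)$, since $\PST(\cat)$ is itself a Grothendieck abelian category, and it is cellular and left proper by the very arguments invoked in the proof of Proposition \ref{locsets} (cf.\ \cite[Proposition 4.4.16]{ayoub-th2}). The only genuine issue for the first claim is that $S_{\et}$ is defined as a proper class and must be replaced by a set generating the same Bousfield localization.

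Concretely, I would fix a set $T$ of maps in $\Ch\Psh(\cat)$ whose Bousfield localization produces the $\et$-local model structure (supplied by \cite[Proposition 4.4.32]{ayoub-th2}), and then take $U := a_{\tr}(T)$ as a set of maps in $\Ch\PST(\cat)$. Hirschhorn's theorem then produces the left proper and cellular model category $L_U \Ch\PST(\cat)$, and the remaining task for the first claim is to identify its weak equivalences with $S_{\et}$. Once this identification is settled, the second claim follows at once by a further direct application of Hirschhorn's theorem to $\Ch_{\et}\PST(\cat)$ with respect to the \emph{a priori} set $\{\Lambda(\B^1_X)[i] \to \Lambda(X)[i]\}_{X \in \cat,\ i \in \Z}$ of $\B^1$-maps, which is genuinely a set as $\cat$ is essentially small.

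The main obstacle is precisely the identification of the $L_U$-weak equivalences with $S_{\et}$. The inclusion $\subseteq$ follows formally: by the choice $U = a_{\tr}(T)$, the adjunction $\adj{a_{\tr}}{\Ch\Psh(\cat)}{\Ch\PST(\cat)}{o_{\tr}}$ descends to a Quillen adjunction $L_U \Ch\PST(\cat) \rightleftarrows \Ch_{\et}\Psh(\cat)$, so $o_{\tr}$ sends $L_U$-weak equivalences to $\et$-weak equivalences. For the reverse inclusion the crucial input is that $\Lambda$ is a $\Q$-algebra: this ensures simultaneously that $o_{\tr}$ is exact and that the $\et$-sheafification of a presheaf with transfers carries a canonical transfer structure, so that $a_{\et}$ commutes with $o_{\tr}$ (in the spirit of the argument of Proposition \ref{fhet}; see also \cite{ayoub-rig}). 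Under this compatibility both ``$f$ is an $L_U$-weak equivalence'' and ``$o_{\tr}(f)$ is an $\et$-weak equivalence'' reformulate as the assertion that $f$ induces isomorphisms on the $\et$-sheafifications of the homology presheaves, and the two conditions therefore coincide, completing the verification.
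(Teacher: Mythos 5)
The overall strategy you propose -- construct the localization by applying Hirschhorn's theorem to the set $U=a_{\tr}(T)$ and then argue that the resulting weak equivalences coincide with $S_{\et}$ -- is indeed the right idea, and it is roughly what lives inside \cite[Theorem 2.5.7]{ayoub-rig}, which is the reference the paper actually invokes. However, the verification you sketch has a genuine gap at the crux of the matter, namely the identification of the $L_U$-weak equivalences with $S_{\et}$.

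Your claim that the inclusion ``$L_U$-weak equivalence $\Rightarrow$ $o_{\tr}(f)$ is an $\et$-weak equivalence'' follows formally because $o_{\tr}$ is the right adjoint of a Quillen pair is not correct. A right Quillen functor preserves weak equivalences only between \emph{fibrant} objects (Ken Brown's lemma), and after localization the fibrant objects of $L_U\Ch\PST(\cat)$ are the $U$-local ones, not arbitrary complexes. To conclude that $o_{\tr}$ takes every $L_U$-weak equivalence to an $\et$-weak equivalence you would need to already know that the $L_U$-weak equivalences are exactly the maps inducing isomorphisms on $\et$-sheafified homology with transfers; combined with exactness of $o_{\tr}$ and commutation of $o_{\tr}$ with $a_{\et}$ that would indeed give the inclusion, but that characterization is precisely what you are trying to prove, so the argument is circular as stated. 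Similarly, the final step asserting that ``both conditions reformulate as the assertion that $f$ induces isomorphisms on the $\et$-sheafifications of the homology presheaves'' is exactly the content of the transfers-analogue of \cite[Proposition 4.4.32]{ayoub-th2}: it requires a descent argument (compatibility of $L_U$-local objects with $\et$-hypercovers, Verdier-type hypercohomology spectral sequence, and so on), and is not something that falls out of the setup. The paper sidesteps all of this by citing the proof of Ayoub's Theorem 2.5.7, which carries out this verification in detail; your sketch would need to reproduce or cite that argument to close the gap. The parts of your write-up that are on solid ground are the reduction to showing that $\Ch\PST(\cat)$ is left proper and cellular, the observation that $o_{\tr}$ is exact and commutes with $a_{\et}$ once $\Lambda$ is a $\Q$-algebra, and the second localization (at the $\B^1$-set), which is indeed a direct application of \cite[Theorem 4.1.1]{hirschhorn} as the paper also notes.
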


\begin{proof}
	The proof of \cite[Theorem 2.5.7]{ayoub-rig} also applies in our situation. For the second statement, it suffices to apply \cite[Theorem 4.1.1]{hirschhorn}.
\end{proof}

\begin{rmk}
	By means of an \'etale version of \cite[Corollary 2.5.3]{ayoub-rig}, if $\mcF$ is a presheaf with transfers then the associated \'etale sheaf $a_{\et}\mcF$ can be endowed with a unique structure of presheaf with transfers such that $\mcF\ra a_{\et}\mcF$ is a map of presheaves with transfers. The class $S_{\et}$ can then be defined intrinsically, as the class of maps $\mcF\ra\mcF'$ inducing isomorphisms of \'etale sheaves with transfers $a_{\et}H_i\mcF\ra a_{\et}H_i\mcF'$. 
\end{rmk}

\begin{dfn}
	Let $B$ be a normal variety over $K$. 
	\begin{itemize}
		\item The triangulated homotopy category  of $\Ch_{\et,\B^1}\PST (\RigSm/B)$ will be denoted by $\RigDM_{\et}^{\eff}(B,\Lambda)$. 
		\item The triangulated homotopy category  of $\Ch_{\et,\B^1}\PST (\RigSm/B^{\Perf})$ will be denoted by $\RigDM_{\et}^{\eff}(B^{\Perf},\Lambda)$.
		\item  We will omit $\Lambda$ from the notation whenever the context allows it. The image of a variety $X$ in one of these categories will be denoted by $\Lambda_{\tr}(X)$. 
	\end{itemize}
\end{dfn}

We remark that if $\car K=0$ the two definitions above coincide. Also, if $B$ is the spectrum of the perfect field $K$ the category $\RigDM^{\eff}_{\et}(B^{\Perf} )$ coincides with $\RigDM^{\eff}_{\et}(K )$. In this case, the definition of $\RigDA_{\Frobet}^{\eff}(B^{\Perf} )$ also coincides with the one of $\RigDA_{\Frobet}^{\eff}(K)$ given in the introduction 
as the following fact shows.

\begin{prop}\label{frobetlocal}
	Let $B$ be a  normal variety over $K$. The category   $\Ch_{\Frobet}( \RigSm/B^{\Perf} )$ is Quillen equivalent  to the left Bousfield localization of $\Ch_{\et}\Psh(\RigSm/B^{\Perf})$ over the set of all shifts of maps $\Lambda(X^{(-1)},-n-1)\ra\Lambda( X,-n)$ as $(X,-n)$ varies in $\RigSm/B^{\Perf}$. 
\end{prop}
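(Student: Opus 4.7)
The plan is to realize both model categories as left Bousfield localizations of the $\et$-local projective structure $\Ch_{\et}\Psh(\RigSm/B^{\Perf})$ and to verify that they share the same class of fibrant objects; any two left Bousfield localizations of a common model category with coinciding fibrant objects agree as model categories, and in particular the identity functor is a Quillen equivalence. First I would confirm that the left Bousfield localization $L_{S_{\Phi}}\Ch_{\et}\Psh(\RigSm/B^{\Perf})$ at the set $S_{\Phi}$ of shifts of the maps $\Lambda(X^{(-1)},-n-1)\to\Lambda(X,-n)$ exists: this uses the fact that $\Ch_{\et}\Psh(\RigSm/B^{\Perf})$ is left proper and cellular (as in the proof of Proposition \ref{locsets}) together with \cite[Theorem 4.1.1]{hirschhorn}. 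The $\Frobet$-local structure is constructed in the same manner. Since every map in $S_{\Phi}$ is, by Proposition \ref{frobcov}, a $\Frobet$-weak equivalence, the identity from $L_{S_\Phi}\Ch_{\et}\Psh$ to $\Ch_{\Frobet}\Psh$ is automatically left Quillen, so only the coincidence of fibrant objects remains.

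Next I would characterize fibrancy on each side. A complex $\mcF$ is fibrant in $\Ch_{\Frobet}\Psh(\RigSm/B^{\Perf})$ iff it is projectively fibrant and its cohomology presheaves satisfy $\Frobet$-descent. As $\tau_{\Frobet}$ is generated by $\tau_{\Frob}$ and $\tau_\et$, Proposition \ref{afrobet} shows that for an $\et$-fibrant $\mcF$ this is equivalent to asking that the \'etale sheaves $a_{\et}H_i\mcF$ be additionally $\Frob$-sheaves, which by Proposition \ref{frobcov} amounts to
\[
(a_{\et}H_i\mcF)(X,-n)\cong (a_{\et}H_i\mcF)(X^{(-1)},-n-1)
\]
for every $(X,-n)\in\RigSm/B^{\Perf}$ and $i\in\Z$. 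Dually, $\mcF$ is $S_{\Phi}$-local in $\Ch_{\et}\Psh$ iff it is $\et$-fibrant and the derived maps $\RHom(\Lambda(X,-n),\mcF)\to\RHom(\Lambda(X^{(-1)},-n-1),\mcF)$ are quasi-isomorphisms. For $\et$-fibrant $\mcF$, the former identifies with the honest section map $\mcF(X,-n)\to\mcF(X^{(-1)},-n-1)$, and passing to $i$-th homology recovers exactly the map above, by the standard identification $H_i(\mcF(X,-n))=(a_{\et}H_i\mcF)(X,-n)$ valid for $\et$-fibrant complexes.

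Thus both classes of fibrant objects are characterized by the same $\et$-fibrancy plus the same cohomological condition, so they coincide; the identity is therefore a left Quillen equivalence between $L_{S_\Phi}\Ch_{\et}\Psh(\RigSm/B^{\Perf})$ and $\Ch_{\Frobet}\Psh(\RigSm/B^{\Perf})$. The main obstacle I foresee is the last identification in paragraph two: one must verify that for an $\et$-fibrant complex $\mcF$ the presheaf $X\mapsto H_i(\mcF(X))$ is already the $\et$-sheaf $a_{\et}H_i\mcF$, so that $S_{\Phi}$-locality can be read off from the $\et$-sheafified cohomology. This is essentially the local criterion for \'etale descent built into the model structure $\Ch_{\et}\Psh$, and together with Proposition \ref{afrobet} it is what translates the set-theoretic localization at $S_{\Phi}$ into the topological localization at $\tau_{\Frob}$.
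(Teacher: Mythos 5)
Your overall strategy — compare the two model structures as Bousfield localizations of $\Ch_{\et}\Psh(\RigSm/B^{\Perf})$ and match fibrant objects — is the right one and is essentially the shape of the paper's proof. However, there is a genuine gap at exactly the point you flag as the ``main obstacle'': the claimed identification $H_i(\mcF(X,-n))=(a_{\et}H_i\mcF)(X,-n)$ for $\et$-fibrant $\mcF$ is false. An $\et$-fibrant complex $\mcF$ satisfies $H_i(\mcF(X))\cong\HH^{-i}_{\et}(X,\mcF)$, i.e.\ naive sections compute \emph{hypercohomology} of the complex, and there is a descent spectral sequence $E_2^{p,q}=H^p_{\et}(X,a_{\et}H_{-q}\mcF)\Rightarrow\HH^{p+q}_{\et}(X,\mcF)$ relating this to sections of the sheafified homology. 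In general this spectral sequence does not degenerate, $H_i\mcF$ is not an $\et$-sheaf, and $H_i(\mcF(X))\neq(a_{\et}H_i\mcF)(X)$. So the step that is supposed to convert $S_\Phi$-locality for an $\et$-fibrant complex into the condition that $a_{\et}H_i\mcF$ is a $\Frob$-sheaf does not go through. For the record, Lemma \ref{froblocal} characterizes $S_\Phi$-locality of a projectively fibrant complex by: \emph{the homology presheaves $H_i\mcF$ themselves} (not $a_{\et}H_i\mcF$) are $\Frob$-sheaves — a weaker and genuinely different condition.

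The other half of your argument, the characterization of $\Frobet$-fibrancy as ``$\et$-fibrant and $a_{\et}H_i\mcF$ is a $\Frob$-sheaf'', is also not justified by Proposition \ref{afrobet} alone. Proposition \ref{afrobet} is a statement about sheafification of abelian presheaves and gives $a_{\Frobet}=a_{\et}a_{\Frob}$, but it does not directly upgrade to a statement about local (fibrant) complexes: one must still show that $\Frobet$-locality equals the conjunction of $\Frob$-locality and $\et$-locality at the level of complexes, which is the content of the paper's Lemma \ref{toptop} and hinges on the exactness of $o_{\Frob}$ (Proposition \ref{oexact}). The paper's proof of Proposition \ref{frobetlocal} is short precisely because it outsources both difficulties: Lemma \ref{toptop} (together with Propositions \ref{afrobet} and \ref{oexact}) shows $\Frobet$-local $\Leftrightarrow$ $\et$-local and $\Frob$-local, and Lemma \ref{froblocal} identifies $\Frob$-local fibrant objects with $S_\Phi$-local ones. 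Your write-up skips both of these ingredients and instead leans on a false homological identification; filling the gap would amount to reproving Lemmas \ref{toptop} and \ref{froblocal}.
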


\begin{proof}
	From Lemmas \ref{afrobet}, \ref{oexact} and \ref{toptop}   we conclude that $\Frobet$-local objects are those which are $\Frob$-local and $\et$-local. We can then conclude using Lemma \ref{froblocal}.
\end{proof}

\begin{lemma}\label{toptop}
	Let $\cat$ be a category endowed with two Grothendieck topologies $\tau_1$, $\tau_2$ and let $\tau_3$ be the topology generated by $\tau_1$ and $\tau_2$. We denote by $a_{\tau_i}$ 
	the associated sheafification functor and with $o_{\tau_i}$ their right adjoint functors. If $o_{\tau_1}$ is exact and   
	$a_{\tau_3}=a_{\tau_2}a_{\tau_1}$ then the  following  categories are   canonically equivalent:
	\begin{enumerate}
		\item The homotopy category of $\Ch_{\tau_3}\Psh(\cat )$.
		\item The full triangulated subcategory of $\catD(\Psh(\cat ))$ formed by objects which are $\tau_3$-local.
		\item The full triangulated subcategory of $\catD(\Psh(\cat ))$ formed by objects which are $\tau_1$-local and $\tau_2$-local.
	\end{enumerate}
\end{lemma}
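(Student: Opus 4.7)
The plan is to prove $(1) \Leftrightarrow (2)$ and $(2) \Leftrightarrow (3)$ separately.

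The equivalence $(1) \Leftrightarrow (2)$ is formal: by the general theory of left Bousfield localizations (cf. \cite[Theorem 4.2.71]{ayoub-th2}), the homotopy category of $\Ch_{\tau_3}\Psh(\cat)$ embeds fully faithfully into $\catD(\Psh(\cat))$ with image the full triangulated subcategory of $\tau_3$-local objects. The implication $(2) \Rightarrow (3)$ is immediate, since $\tau_1$ and $\tau_2$ are coarser than $\tau_3$, so every $\tau_3$-weak equivalence is in particular both a $\tau_1$- and a $\tau_2$-weak equivalence, and $\tau_3$-locality then forces $\tau_1$- and $\tau_2$-locality.

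The main content is therefore $(3) \Rightarrow (2)$. I will use the standard characterization that an object $\mcF \in \catD(\Psh(\cat))$ is $\tau$-local if and only if $\Hom_{\catD(\Psh(\cat))}(\mcG, \mcF) = 0$ for every complex $\mcG$ with $a_\tau H_i(\mcG) = 0$ for all $i$ (obtained by looking at cones of $\tau$-weak equivalences, and using that $\tau$-acyclic complexes are themselves cones of $\tau$-weak equivalences $0 \to \mcG$). So I assume $\mcF$ is both $\tau_1$-local and $\tau_2$-local, take $\mcG$ that is $\tau_3$-acyclic in this sense, and aim to show $\Hom_{\catD(\Psh(\cat))}(\mcG, \mcF) = 0$.

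The key construction is the termwise endofunctor $\tilde{a}_{\tau_1} := o_{\tau_1} a_{\tau_1}$ on $\Ch\Psh(\cat)$. The hypothesis that $o_{\tau_1}$ is exact, combined with the (automatic) exactness of $a_{\tau_1}$, makes $\tilde{a}_{\tau_1}$ exact on complexes of presheaves, so that $H_i(\tilde{a}_{\tau_1}\mcG) = \tilde{a}_{\tau_1} H_i(\mcG)$ and the unit $\mcG \to \tilde{a}_{\tau_1}\mcG$ is a $\tau_1$-weak equivalence (because $a_{\tau_1} \to a_{\tau_1} o_{\tau_1} a_{\tau_1} = a_{\tau_1}$ is an isomorphism on the homology presheaves). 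By $\tau_1$-locality of $\mcF$,
\[
\Hom_{\catD(\Psh(\cat))}(\mcG, \mcF) \;\cong\; \Hom_{\catD(\Psh(\cat))}(\tilde{a}_{\tau_1}\mcG, \mcF).
\]
Using $a_{\tau_3} = a_{\tau_2} a_{\tau_1}$, one computes
\[
a_{\tau_2} H_i(\tilde{a}_{\tau_1}\mcG) \;=\; a_{\tau_2} o_{\tau_1} a_{\tau_1} H_i(\mcG) \;=\; a_{\tau_3} H_i(\mcG) \;=\; 0,
\]
so $\tilde{a}_{\tau_1}\mcG$ is $\tau_2$-acyclic, and $\tau_2$-locality of $\mcF$ forces the right hand side to vanish.

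I do not anticipate serious obstacles; the subtlety is that both hypotheses are indispensable and used precisely once. Exactness of $o_{\tau_1}$ is what lets $\tilde{a}_{\tau_1}$ be applied termwise and still commute with $H_i$, so that $\mcG \to \tilde{a}_{\tau_1}\mcG$ is a genuine $\tau_1$-weak equivalence in $\catD(\Psh(\cat))$; while $a_{\tau_3} = a_{\tau_2} a_{\tau_1}$ is exactly the identity that converts $\tau_3$-acyclicity into $\tau_2$-acyclicity after one pass of $\tau_1$-sheafification, bridging the two locality assumptions on $\mcF$.
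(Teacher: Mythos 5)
Your proof is correct, and it uses the two hypotheses in the same places as the paper, but via a more direct route. The paper's argument stays in model-category language: exactness of $o_{\tau_1}$ is used to identify the $\tau_1$-local objects of $\catD(\Psh(\cat))$ with $\catD(\Sh_{\tau_1}(\cat))$; the identity $a_{\tau_3} = a_{\tau_2}a_{\tau_1}$ is then used to identify the $\tau_3$-localization of $\Ch(\Sh_{\tau_1}(\cat))$ with its $\tau_2$-localization; and finally the Quillen adjunction $(a_{\tau_1},o_{\tau_1})$ is invoked to carry $\tau_2$-local complexes of $\tau_1$-sheaves back to $\tau_3$-local complexes of presheaves. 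You instead work entirely inside $\catD(\Psh(\cat))$ using the standard acyclic-object criterion for locality, avoiding the passage to a second model category. Your exact endofunctor $\tilde{a}_{\tau_1}=o_{\tau_1}a_{\tau_1}$ plays exactly the role that identification with $\catD(\Sh_{\tau_1}(\cat))$ plays in the paper — exactness lets it commute with homology and makes the unit $\mcG\to\tilde{a}_{\tau_1}\mcG$ a $\tau_1$-equivalence — and the computation $a_{\tau_2}\tilde{a}_{\tau_1}=a_{\tau_3}$ is the same fact the paper expresses by saying that on $\Ch(\Sh_{\tau_1}(\cat))$ the $\tau_3$-equivalences coincide with the $\tau_2$-equivalences. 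The advantage of your presentation is that the mechanism is made visible at the level of individual objects: a single pass of $\tilde{a}_{\tau_1}$ converts a $\tau_3$-acyclic complex into a $\tau_1$-equivalent, $\tau_2$-acyclic one, so that each of the two locality hypotheses on $\mcF$ is applied exactly once and the $\Hom$-vanishing drops out.
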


\begin{proof}
	The equivalence between the first and the second category 
	follows by definition of the Bousfield localization.  We are left to prove the equivalence between the second and the third. We remark that $\tau_3$-local objects are in particular $(\tau_1,\tau_2)$-local. 
	
	Since $o_{\tau_1}$ is exact, the category of $\tau_1$-local objects coincides with the category of complexes quasi-isomorphic to complexes of $\tau_1$-sheaves. 
	Consider the model category $\Ch_{\tau_3}(\Sh_{\tau_1}(\cat))$ which is the Bousfield localization of $\Ch(\Sh_{\tau_1}(\cat))$ over the class of maps   of complexes inducing isomorphisms on the $\tau_3$-sheaves associated to the homology presheaves, that we will call $\tau_3$-equivalences. From the assumption $a_{\tau_3}=a_{\tau_2}a_{\tau_1}$ the class of $\tau_3$-equivalences coincides with the class of maps $S_{\tau_2}$  of complexes inducing isomorphisms on the $\tau_2$-sheaves associated to the homology $\tau_1$-sheaves. Hence $\Ch_{\tau_3}(\Sh_{\tau_1}(\cat))$ coincides with $\Ch_{\tau_2}(\Sh_{\tau_1}(\cat))$ and its derived category is equivalent to the category of $(\tau_1,\tau_2)$-local complexes.

	Because of the following Quillen adjunction
	\[
	\adj{\LL a_{\tau_1}=a_{\tau_1}}{\Ho(\Ch_{\tau_3}\Psh(\cat)}{\Ho(\Ch_{\tau_3}\Sh_{\tau_1}(\cat))}{\RR o_{\tau_1}=o_{\tau_1}}.
	\]
	we conclude that the image via $o_{\tau_1}$ of a $\tau_2$-local complex of sheaves i.e. a $(\tau_1.\tau_2)$-local complex, is $\tau_3$-local, as wanted.
\end{proof}

\begin{lemma}\label{froblocal}
	Let $B$ be a  normal variety over $K$.  A fibrant object of $\Ch\Psh(\RigSm/B^{\Perf})$ is $\Frob$-local if and only if it is local with respect to the set of all shifts of maps $\Lambda(X^{(-1)},-n-1)\ra\Lambda(X,-n)$ as $(X,-n)$ varies in $\RigSm/B^{\Perf}$. 
\end{lemma}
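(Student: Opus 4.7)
The starting observation is that for the projective-fibrant $\mcF$ and any object $(Y,-m)$ of $\RigSm/B^{\Perf}$, the derived mapping complex $\RHom(\Lambda(Y,-m),\mcF)$ is computed by the sections $\mcF(Y,-m)$, since representables are projectively cofibrant. Consequently, $\mcF$ is local with respect to all shifts of $\Lambda(X^{(-1)},-n-1)\to\Lambda(X,-n)$ if and only if the pullback map on sections $\mcF(X,-n)\to\mcF(X^{(-1)},-n-1)$ is a quasi-isomorphism for every $(X,-n)\in\RigSm/B^{\Perf}$. Passing to homology, this is equivalent to $H_i\mcF(X,-n)\cong H_i\mcF(X^{(-1)},-n-1)$ for every $i$ and every $(X,-n)$, which by Proposition \ref{frobcov} is exactly the condition that each homology presheaf $H_i\mcF$ is a $\Frob$-sheaf.

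The plan is therefore to establish the parallel reformulation: a projective-fibrant $\mcF$ is $\Frob$-local if and only if every $H_i\mcF$ is a $\Frob$-sheaf. The crucial input is that $a_{\Frob}$ is exact. By Corollary \ref{perfsm}, $\Frob$-sheafification is computed by the filtered colimit $a_{\Frob}\mcG(X,-n)=\varinjlim_r\mcG(X^{(-r)},-n-r)$, and filtered colimits of $\Lambda$-modules are exact; combined with the exactness of $o_{\Frob}$ (Proposition \ref{oexact}) this shows that $a_{\Frob}$ commutes with the formation of homology, so that $H_i(a_{\Frob}\mcF)=a_{\Frob}H_i\mcF$. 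Invoking the $\Frob$-analogue of Proposition \ref{locsets}(2), which provides a Quillen equivalence between $\Ch_{\Frob}\Psh(\RigSm/B^{\Perf})$ and the projective model category of complexes of $\Frob$-sheaves, a projective-fibrant $\mcF$ is $\Frob$-local precisely when the unit $\mcF\to a_{\Frob}\mcF$ is a quasi-isomorphism of presheaves. By the commutation of $a_{\Frob}$ with $H_i$ this reduces to each $H_i\mcF\to a_{\Frob}H_i\mcF$ being an isomorphism, i.e.\ to each $H_i\mcF$ being a $\Frob$-sheaf.

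Chaining the two equivalences finishes the proof. The main (and really only) point requiring care is the identification of projective-fibrant $\Frob$-local objects with complexes whose homology presheaves are $\Frob$-sheaves; this rests entirely on the exactness of $a_{\Frob}$, which is transparent from the filtered colimit description in Corollary \ref{perfsm}. Everything else is formal bookkeeping with the universal property of the Bousfield localization and the sheaf criterion of Proposition \ref{frobcov}.
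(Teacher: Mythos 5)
Your proof is correct and follows the same skeleton as the paper's: both reduce the statement to the identification ``$\mcF$ is local with respect to the shifted Frobenius maps $\iff$ each $H_i\mcF$ is a $\Frob$-sheaf $\iff$ $\mcF$ is $\Frob$-local,'' with the first equivalence read off from Proposition~\ref{frobcov} and the fact that representables are projectively cofibrant. Where you genuinely diverge from the paper is in establishing the second equivalence. You argue via exactness: $a_{\Frob}$ is a filtered colimit (Corollary~\ref{perfsm}), hence exact, and together with the exactness of $o_{\Frob}$ (Proposition~\ref{oexact}) it commutes with homology; then you recast ``$\Frob$-local'' as ``the unit $\mcF\to a_{\Frob}\mcF$ is a presheaf quasi-isomorphism.'' The paper instead avoids the exactness of $a_{\Frob}$ and the Quillen equivalence machinery: for the easy direction it just observes that the representable Frobenius maps are $\Frob$-weak equivalences, and for the converse it picks a fibrant $\Frob$-local replacement $\mcF\to C^{\Frob}\mcF$ and sandwiches, noting that both $H_i\mcF$ and $H_iC^{\Frob}\mcF$ are $\Frob$-sheaves with the same $\Frob$-sheafification, forcing $\mcF\to C^{\Frob}\mcF$ to be a presheaf quasi-isomorphism. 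Both strategies succeed; yours is conceptually tidy because it isolates ``exactness of sheafification'' as the single driving fact, while the paper's is more hands-on and self-contained.

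One step in your write-up deserves more care. The assertion that ``a projective-fibrant $\mcF$ is $\Frob$-local precisely when the unit $\mcF\to a_{\Frob}\mcF$ is a quasi-isomorphism of presheaves'' is true but is not a tautology extracted from having a Quillen equivalence. The map $\mcF\to a_{\Frob}\mcF$ is always a $\Frob$-weak equivalence (using $a_{\Frob}H_i=H_ia_{\Frob}$), and if $\mcF$ is $S_{\Frob}$-local one needs either a retract argument in the homotopy category (which gives that $H_i\mcF$ is a retract of the $\Frob$-sheaf $a_{\Frob}H_i\mcF$, hence itself a $\Frob$-sheaf) or the sandwiching used in the paper, rather than a one-line appeal to the Quillen equivalence. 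Likewise, the converse direction (quasi-isomorphism $\Rightarrow$ locality) uses implicitly that complexes whose homology presheaves are $\Frob$-sheaves are already $\Frob$-local, which holds precisely because the $\Frob$-topology imposes no higher \v{C}ech coherence --- that is the content of Corollary~\ref{perfsm} --- and ought to be said. With those two points made explicit, the argument is complete.
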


\begin{proof} 
	We initially remark that a fibrant complex $\mcF$ is local with respect to the set of maps in the claim if and only if $(H_i\mcF)(X,-n)\cong (H_i\mcF)(X^{(-1)},-n-1)$ for all $X$ and $i$. By Proposition \ref{frobuniv}, this amounts to say that $H_i\mcF$ is a $\Frob$-sheaf for all $i$.  
	
	Suppose now that  $\mcF$ is fibrant and $\Frob$-local. Since the map of presheaves $\Lambda(X^{(-1)},-n-1)\ra\Lambda(X,-n)$ induces an isomorphism on the associated $\Frob$-sheaves, we deduce that $(H_i\mcF)(X^{(-1)},-n-1)\cong(H_i\mcF)(X,-n)$. This implies that $H_i\mcF$ is a $\Frob$-sheaf and hence $\mcF$ is local with respect to the maps of the claim, as wanted.
	
	Suppose now that  $\mcF$ is fibrant and local with respect to the maps of the claim. Let $\mcF\ra C^{\Frob}\mcF$ a $\Frob$-weak equivalence to a fibrant $\Frob$-local object. By definition, we deduce that the $\Frob$-sheaves associated to $H_i\mcF$ and to $H_iC^{\Frob}\mcF$ are isomorphic. On the other hand, we know that these presheaves are already $\Frob$-sheaves, and hence the map $\mcF\ra C^{\Frob}\mcF$ is a quasi-isomorphism of presheaves and $\mcF$ is $\Frob$-local.
\end{proof}

We now want to find another model for the category $\catD^{\fh}_{\et,\B^1}(\RigNor/B)$. This is possible by means of the model-categorical machinery developed above.

By Remark \ref{pseudoG} an object $\mcF$ in $\Ch\Psh(\RigNor/B )$ is $\fh$-local if and only if it is additive and 
\[
{\catD\Psh(\RigNor/B)}(\Lambda(X),\mcF)\ra{\catD\Psh(\RigNor/B)}(\Lambda(X'),\mcF)^{\Aut(X'/X)}
\]
is an isomorphism, for all pseudo-Galois coverings $X'\ra X$ in $\Aff\!\Nor/B$. 
Therefore,  if we consider $\catD_{\Frobet,\B^1}(\RigNor/B)$ as the subcategory of $(\B^1,\Frobet)$-local objects in $\catD\Psh(\RigNor/B)$ we say that  an object $\mcF$ of $\catD_{\Frobet,\B^1}(\RigNor/B ) $ is $\fh$-local if and only if 
\[
{\catD_{\Frobet,\B^1}(\RigNor/B)}(\Lambda(X),\mcF)\ra{\catD_{\Frobet,\B^1}(\RigNor/B)}(\Lambda(X'),\mcF)^{\Aut(X'/X)}
\]
is an isomorphism, for all pseudo-Galois coverings $X'\ra X$.

\begin{prop}\label{compcor}
	Let $B$ be a  normal variety over $K$. The category $\catD^{\fh}_{\et,\B^1}(\RigNor/B )$ is canonically isomorphic  to the 
	category of $\fh$-local objects in $\catD_{\Frobet,\B^1}(\RigNor/B )$.
\end{prop}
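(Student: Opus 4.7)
The plan is to realize both sides of the claim as full triangulated subcategories of $\catD(\Psh(\RigNor/B))$ cut out by local conditions, and to verify that these conditions coincide. By definition, $\catD^{\fh}_{\et,\B^1}(\RigNor/B)$ is the subcategory of $(\fhet,\B^1)$-local objects of $\catD(\Psh(\RigNor/B))$, while the $\fh$-local objects of $\catD_{\Frobet,\B^1}(\RigNor/B)$ are those complexes which are simultaneously $\fh$-local, $\Frobet$-local, and $\B^1$-local. The proposition thus reduces to identifying $\fhet$-local objects with those that are both $\fh$-local and $\Frobet$-local.

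First, I would apply Lemma \ref{toptop} with $\tau_1=\fh$, $\tau_2=\et$, $\tau_3=\fhet$. Exactness of $o_{\fh}$ is provided by Proposition \ref{oexact}. For the identity $a_{\fhet}=a_{\et}\circ a_{\fh}$, one observes that, by Proposition \ref{fhet}, the functor $a_{\et}$ sends $\fh$-sheaves to $\fhet$-sheaves, so the composite $a_{\et}\circ a_{\fh}$ factors through $\Sh_{\fhet}$; since it is then left adjoint to the inclusion $\Sh_{\fhet}\hookrightarrow\Psh$, uniqueness of adjoints forces it to coincide with $a_{\fhet}$. The lemma then identifies $\fhet$-local objects in $\catD(\Psh(\RigNor/B))$ with those that are both $\fh$-local and $\et$-local.

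Next I would apply Lemma \ref{toptop} a second time, now with $\tau_1=\Frob$, $\tau_2=\et$, $\tau_3=\Frobet$, following the same pattern used in the proof of Proposition \ref{frobetlocal}. Exactness of $o_{\Frob}$ again comes from Proposition \ref{oexact}, and the identity $a_{\Frobet}=a_{\et}\circ a_{\Frob}$ follows from Proposition \ref{afrobet} by the same adjoint uniqueness argument. This identifies $\Frobet$-local objects with those that are both $\Frob$-local and $\et$-local.

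Combining the two decompositions, $\catD^{\fh}_{\et,\B^1}(\RigNor/B)$ consists of complexes in $\catD(\Psh(\RigNor/B))$ which are $\fh$-local, $\et$-local, and $\B^1$-local, whereas the $\fh$-local objects of $\catD_{\Frobet,\B^1}(\RigNor/B)$ consist of complexes which are $\fh$-local, $\Frob$-local, $\et$-local, and $\B^1$-local. Since the $\fh$-topology is finer than the $\Frob$-topology (as noted in the remark following the topology definitions), every $\fh$-local object is automatically $\Frob$-local, so the two lists of local conditions coincide, and the identification of the two categories follows. The only genuine technical point is a clean verification of $a_{\fhet}=a_{\et}\circ a_{\fh}$, which is where the specific content of Proposition \ref{fhet} enters; everything else is a formal combination of Lemma \ref{toptop} with the observation that $\fh$ refines $\Frob$.
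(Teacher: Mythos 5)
Your proof is correct and follows essentially the same route as the paper: both decompose the local conditions via two applications of Lemma \ref{toptop} (using Propositions \ref{afrobet}, \ref{fhet} and Lemma \ref{oexact} to verify its hypotheses) and then conclude by observing that $\fh$-locality subsumes $\Frob$-locality since $\fh$ refines $\Frob$. The paper's proof is simply a terser citation of the same ingredients.
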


\begin{proof}
	It suffices to prove the claim before performing the $\B^1$-localization on each category. The statement then follows from  Propositions \ref{afrobet} and \ref{fhet} together with Lemmas \ref{oexact} and \ref{toptop}.
\end{proof}

We now study some functoriality properties of the categories just defined, and later prove a fundamental fact: the {locality axiom} (see \cite[Theorem 3.2.21]{mv-99}).

\begin{prop}\label{quillenpairs}
	Let $f\colon B'\ra B$ be a map of normal varieties over $K$. 
	The first two adjoint pairs of Proposition \ref{quillenpairsS}  
	induce the following Quillen pairs: 
	\[\adj{\LL f_\sharp}{\catD_{\Frobet,\B^1}(\RigNor/B')}{\catD_{\Frobet,\B^1}(\RigNor/B)}{\RR f^*}\]
	\[\adj{\LL f^*}{\RigDA_{\Frobet}^{\eff}(B^{\Perf})}{\RigDA_{\Frobet}^{\eff}(B'^{\Perf})}{\RR f_*}\]
	which are equivalences whenever $f$ is a $\Frob$-covering. Moreover, 
	if $f$ is a  smooth map,  the third adjoint pair of Proposition \ref{quillenpairsS} induces 
	a Quillen pair:
	\[\adj{\LL f_\sharp}{\RigDA_{\Frobet}^{\eff}(B'^{\Perf})}{\RigDA_{\Frobet}^{\eff}(B^{\Perf})}{\LL f^*}\]
\end{prop}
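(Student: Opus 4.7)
The plan is to combine Proposition \ref{quillenpairsS} with the standard criterion for descending a Quillen adjunction to a left Bousfield localization (see, e.g., \cite[Theorem 3.3.20]{hirschhorn} or \cite[Proposition 4.2.74]{ayoub-th2}): given a Quillen pair $\adj{F}{\catM}{\catM'}{G}$ between left proper cellular model categories and sets $S$, $S'$ of maps in $\catM$, $\catM'$ respectively, the pair descends to a Quillen pair between the Bousfield localizations $\catM_S$ and $\catM'_{S'}$ as soon as $\LL F$ sends every element of $S$ to an $S'$-weak equivalence. In our setting, $S$ and $S'$ are the generating $\B^1$-contractions on each side, and the underlying Quillen pairs at the level of $\Ch\Sh_{\Frobet}$ have already been produced by Proposition \ref{quillenpairsS}. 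It therefore suffices to verify that each of the three left adjoints $f_\sharp$, $f^*$, and $f_\sharp$ (for $f$ smooth) sends a generating morphism of the form $\Lambda(\B^1_X)\to \Lambda(X)$, respectively $\Lambda(\B^1_X,-n)\to\Lambda(X,-n)$, to a $\B^1$-weak equivalence in the target.

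This is immediate from the fact that the interval object $\B^1=\Spa K\langle\chi\rangle$ is absolute, so that $\B^1_X$ is compatible with both composition and base change: for an arbitrary morphism $f$, one has $f^*(\B^1_X,-n)\cong(\B^1_{X\times_{B^{(-n)}}B'^{(-n)}},-n)=\B^1_{f^*(X,-n)}$; for $f$ smooth and $X'/B'$ smooth, $f_\sharp(\B^1_{X'})$ is $\B^1_{X'}$ reindexed over $B$, which is $\B^1_{f_\sharp X'}$; and similarly in the first adjunction the sheaf $f_\sharp\Lambda(\B^1_X)$ is represented by $\B^1_X$ viewed over $B$. Hence each left adjoint carries a generating $\B^1$-contraction to a morphism of the same form in the target, which is a $\B^1$-weak equivalence by construction. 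This yields the three Quillen pairs claimed in the statement.

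For the equivalence assertion when $f$ is a $\Frob$-cover, the third item of Proposition \ref{quillenpairsS} already provides equivalences of the underlying $\Ch\Sh_{\Frobet}$ categories; the same computation of the effect on $\B^1$-contractions applies symmetrically to the inverse functors, so the classes of $(\Frobet,\B^1)$-weak equivalences on the two sides are identified under the equivalence, and the induced adjunctions on the $\B^1$-localizations are still equivalences. The main point to verify, and really the only obstacle, is a bookkeeping one: one must check that the $\Frobet$-localized model structures on $\Ch\Psh(\RigNor/B)$ and on $\Ch\Psh(\RigSm/B^{\Perf})$ remain left proper and cellular so that the further $\B^1$-localization is legitimate, which follows from \cite[Theorem 4.2.71]{ayoub-th2} once we observe, via Propositions \ref{frobetlocal} and \ref{locsets}, that the $\Frobet$-localization is itself a Bousfield localization along a set of maps.
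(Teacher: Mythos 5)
Your proof is correct and follows essentially the same route as the paper: both invoke Proposition \ref{quillenpairsS} for the underlying Quillen pairs and then descend through the $\B^1$-Bousfield localization by noting that each left adjoint is compatible with $\B^1$ (via $f^*(\B^1_X)=\B^1_{f^*X}$ and $f_\sharp(\B^1_X)=\B^1_X$), where the paper cites \cite[Theorem 4.4.61]{ayoub-th2} for the localization step and you invoke the equivalent criterion from Hirschhorn/Ayoub. Your added remarks on the equivalence case and on cellularity/left properness of the intermediate localization are consistent with, and slightly more explicit than, what the paper leaves implicit.
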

\begin{proof}
	The statement is a formal consequence of Proposition \ref{quillenpairsS}, \cite[Theorem 4.4.61]{ayoub-th2} and  the formulas $f^*(\B^1_X)=\B^1_{f^*(X)}$ and $f_\sharp(\B^1_X)=\B^1_X$. 
\end{proof}

\begin{prop}\label{Ri*}
	Let $e\colon B'\ra B$ be a finite map of normal varieties over $K$. The functor 
	\[e_*\colon \Ch\Psh(\RigSm/B'^{\Perf})\ra \Ch\Psh(\RigSm/B^{\Perf})\]
	preserves the $(\Frobet,\B^1)$-equivalences.
\end{prop}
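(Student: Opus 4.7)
The plan is to show that $e_*$ commutes, up to natural isomorphism, with each of the operations used to detect $(\Frobet,\B^1)$-equivalences, namely $\Frobet$-sheafification and Suslin--Voevodsky singularization. The pivotal observation is that $e_*$ on presheaves is computed pointwise: by adjunction,
\[
(e_*\mcF)(X,-n) = \mcF(X \times_{B^{(-n)}} B'^{(-n)},-n),
\]
so $e_*$ is exact on the abelian category of presheaves and preserves quasi-isomorphisms of complexes; in particular $H_i \circ e_* = e_* \circ H_i$.

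Next I would verify that $e_* \circ a_{\Frobet} = a_{\Frobet} \circ e_*$. By Proposition \ref{afrobet} together with Lemma \ref{toptop}, one has $a_{\Frobet} = a_{\et} \circ a_{\Frob}$. Remark \ref{e*} supplies the commutation $e_* \circ a_{\et} = a_{\et} \circ e_*$, using crucially that $e$ is finite. For the $\Frob$-part, Corollary \ref{perfsm} gives $a_{\Frob}\mcF(X,-n) = \varinjlim_r \mcF(X^{(-r)},-n-r)$, and a direct calculation---using that Frobenius twist is compatible with the base change $(-)\times_{B^{(-n)}} B'^{(-n)}$---yields $e_* \circ a_{\Frob} = a_{\Frob} \circ e_*$. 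Combined with exactness, this shows that $e_*$ preserves $\Frobet$-equivalences, i.e.\ maps inducing an isomorphism on $a_{\Frobet} H_i$ for every $i\in\Z$.

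For the $\B^1$-part I would invoke the Suslin--Voevodsky singularization $\Sing^{\B^1}$ associated to the cosimplicial interval $\B^\bullet$. Since $e^*(X \times \B^n) = e^*(X) \times \B^n$, the pointwise description of $e_*$ directly yields $e_* \circ \Sing^{\B^1} = \Sing^{\B^1} \circ e_*$. By Ayoub's framework (cf.\ \cite[Section 4.4]{ayoub-th2}), a map of complexes of presheaves is a $(\Frobet,\B^1)$-equivalence if and only if its image under $\Sing^{\B^1}$ is a $\Frobet$-equivalence; combining with the previous step, the characterizing property is preserved by $e_*$, as required.

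The main technical checkpoint is the bookkeeping needed for the commutation with $a_{\Frob}$ on the 2-colimit category $\RigSm/B^{\Perf}$, where the indexed objects $(X,-n)$ and the interplay between base change and Frobenius twist have to be carefully tracked; once this is done, the remaining verifications are essentially formal.
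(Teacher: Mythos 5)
Your two-part strategy is sound, and the $\Frobet$-half coincides with the paper's own argument: you use Remark \ref{e*} to see that $e_*$ commutes with $a_{\et}$, compatibility of Frobenius twist with base change along $e$ together with Corollary \ref{perfsm} to see that it commutes with $a_{\Frob}$, and then Proposition \ref{afrobet} to combine. For the $\B^1$-half you depart from the paper. The paper invokes \cite[Proposition 4.2.74]{ayoub-th2} to reduce to checking that $e_*$ sends the generating maps $\Lambda(\B^1_V)\to\Lambda(V)$ to $\B^1$-equivalences, and then exhibits the explicit contracting $\B^1$-homotopy on $e_*\Lambda(\B^1_V)$. You instead observe that $e_*$ commutes with $\Sing^{\B^1}$, which is indeed immediate once you note that $e_*$ is computed pointwise and that base change over $e$ commutes with $-\times\B^n$, and you then appeal to a characterization of $(\Frobet,\B^1)$-equivalences as the maps whose image under $\Sing^{\B^1}$ is a $\Frobet$-equivalence. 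That characterization is correct, but it is not a triviality: the direction ``$(\Frobet,\B^1)$-equivalence implies $\Sing^{\B^1}$ becomes a $\Frobet$-equivalence'' requires knowing that the right adjoint of $\Sing^{\B^1}$ (totalization along $\B^\bullet$) carries $\Frobet$-fibrant complexes to $(\Frobet,\B^1)$-fibrant ones; this rests on stability of $\Frobet$-fibrancy under the pushforwards $j_{n*}$ and under homotopy limits, plus $\B^1$-invariance of $RW$. You should spell this out or give a precise citation, since \cite[Section 4.4]{ayoub-th2} alone is too vague a pointer. The paper's route, by contrast, needs only the single categorical lemma \cite[Proposition 4.2.74]{ayoub-th2} and a quotable explicit homotopy, so it is shorter and more self-contained; your route has the merit of expressing the commutation entirely at the level of concrete functors ($a_{\Frobet}$ and $\Sing^{\B^1}$) rather than at the level of model-categorical localizations.
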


\begin{proof}
	Let $e\colon B'\ra B$ be a finite map of normal varieties. The functor $e_*$ is induced by the map $\RigSm/B^{\Perf}\ra\RigSm/B'^{\Perf}$ sending $(X,-n)$ to $(X\times_{B^{(-n)}}B'^{(-n)},-n)$. From Remark \ref{e*} it commutes with $\et$-sheafification. As the image of $(X^{(-1)},-n-1)$ is isomorphic to $((X\times_{B^{(-n)}}B'^{(-n)})^{(-1)},-n-1)$ we deduce from Corollary \ref{perfsm} that $e_*$ commutes with $\Frob$-sheafification. Therefore by Proposition \ref{afrobet} we deduce that 
	$e_*\colon\Psh(\RigSm/B'^{\Perf})\ra\Psh(\RigSm/B^{\Perf})$ commutes with the functor $a_{\Frobet}$ of $\Frobet$-sheafification, hence it preserves $\Frobet$-equivalences.

	We now prove that it also preserves $\B^1$-equivalences. By \cite[Proposition 4.2.74]{ayoub-th2} it suffices to show that $e_*(\Lambda(\B^1_V)\ra\Lambda(V))$ is a $\B^1$-weak equivalence for any $V$ in $\RigSm/X'^{\Perf}$. This follows from the explicit homotopy between the identity and the zero map on $e_*(\Lambda(\B^1_V))$ (see the argument of \cite[Theorem 2.5.24]{ayoub-rig}).  
\end{proof}

The following property  is an extension of \cite[Theorem 1.4.20]{ayoub-rig} and referred to as the \emph{locality axiom.} 

\begin{thm}\label{locality}
	Let $i\colon Z\hookrightarrow B$ be a closed immersion of normal varieties over $K$ and let $j\colon U\hookrightarrow B$ be the open complement. For every object $M$ in $\RigDA^{\eff}_{\Frobet}(B^{\Perf})$ there is 
	an distinguished triangle
	\[
	\LL j_\sharp\LL j^* M\ra M\ra \RR i_*\LL i^*M\ra
	\]
	In particular, the pair $(\LL j^*,\LL i^*)$ is conservative.
\end{thm}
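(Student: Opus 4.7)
The plan is to reduce to Ayoub's locality axiom \cite[Theorem 1.4.20]{ayoub-rig} for $\RigDA^{\eff}_{\et}$ in two stages: first establish the triangle in $\RigDA^{\eff}_{\et}(B^{\Perf})$ by assembling the classical triangles at every Frobenius level, and then descend to $\RigDA^{\eff}_{\Frobet}(B^{\Perf})$ through the localization functor. For the first stage, I pull back the decomposition $Z \hookrightarrow B \hookleftarrow U$ along each relative Frobenius to obtain compatible closed/open decompositions $Z^{(-n)} \hookrightarrow B^{(-n)} \hookleftarrow U^{(-n)}$ for all $n \geq 0$. Ayoub's axiom provides at each level a functorial distinguished triangle
\[
\LL j^{(-n)}_\sharp \LL j^{(-n)*} M_n \ra M_n \ra \RR i^{(-n)}_* \LL i^{(-n)*} M_n \ra
\]
in $\RigDA^{\eff}_{\et}(B^{(-n)})$. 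Since $\RigDA^{\eff}_{\et}(B^{\Perf})$ is generated by the objects $\Lambda(X,-n)$ with $X \in \RigSm/B^{(-n)}$, and the functors $j_\sharp, j^*, i_*, i^*$ are compatible with the Frobenius pullback functors between levels, these triangles assemble into the analogous triangle on each generator, and hence for all $M$ by functoriality and closure of distinguished triangles under homotopy colimits.

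For the second stage, I invoke the $\Frobet$-Bousfield localization functor $\mathbf{L}\colon \RigDA^{\eff}_{\et}(B^{\Perf}) \ra \RigDA^{\eff}_{\Frobet}(B^{\Perf})$, which is triangulated. It commutes formally with $\LL j_\sharp$, $\LL j^*$ and $\LL i^*$, since each is induced by a left Quillen functor (Proposition \ref{quillenpairs}) that sends $\Frob$-covers to $\Frob$-covers. The nontrivial commutation $\mathbf{L} \circ \RR i_* \simeq \RR i_* \circ \mathbf{L}$ follows from Proposition \ref{Ri*}: a closed immersion of affinoids is finite, so $i_*$ preserves $(\Frobet,\B^1)$-equivalences. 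Given $M \in \RigDA^{\eff}_{\Frobet}(B^{\Perf})$, viewing it inside $\RigDA^{\eff}_{\et}(B^{\Perf})$ via the inclusion of $\Frobet$-local objects (whose composition with $\mathbf{L}$ is naturally the identity) and then applying $\mathbf{L}$ to the first-stage triangle produces the desired triangle in $\RigDA^{\eff}_{\Frobet}(B^{\Perf})$.

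Conservativity of $(\LL j^*, \LL i^*)$ falls out immediately: if $\LL j^* M \simeq 0$ and $\LL i^* M \simeq 0$, then both outer terms vanish, forcing $M \simeq 0$. The step I expect to be the main obstacle is the commutation $\mathbf{L} \circ \RR i_* \simeq \RR i_* \circ \mathbf{L}$, as it is the only ingredient specific to the $\Frobet$-topology; it hinges on Proposition \ref{Ri*} and the finiteness of the closed immersion $i$.
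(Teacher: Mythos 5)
Your proposal follows essentially the same route as the paper's proof: invoke Proposition \ref{Ri*} to identify $\RR i_*$ with $i_*$ so that the triangle commutes with the $\Frob$-localization, reduce to $\RigDA^{\eff}_{\et}(B^{\Perf})$, and then reduce along the Frobenius tower to Ayoub's locality theorem. The step you compress is the descent from $B^{\Perf}$ to each finite level $B^{(-n)}$: the paper first notes that $\LL j_\sharp$, $\LL j^*$, $\LL i^*$ and $\RR i_*=i_*$ all commute with small sums, reduces to compact objects, and then invokes Lemma \ref{DA=limDA} (that $\varinjlim_n\RigDA^{\ct}_{\et}(B^{(-n)})\cong\RigDA^{\ct}_{\et}(B^{\Perf})$) to make your "assembly" precise; note also that \cite[Theorem 1.4.20]{ayoub-rig} is stated for the Nisnevich topology, so one further localization from $\Nis$ to $\et$ is needed before applying it at each level.
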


\begin{proof}
	First of all, we remark that by Proposition \ref{Ri*} one has $\RR i_*=i_*$. In particular it suffices to prove the claim before performing the localization over the shifts of maps $\Lambda(X^{(-1)},-n-1)\ra\Lambda( X,-n)$ 
	i.e. in the  category  $\RigDA_{\et}^{\eff}(B^{\Perf} )$. 
	
	The functors  $\LL j_\sharp$ $\LL j^*$  and $\LL i^*$ commute with small sums because they admit right adjoint functors. Also $\RR i_*$ does, since it holds $\RR i_*=i_*$.  We conclude that  the full subcategory  of $\RigDA^{\eff}_{\Frobet}(B^{\Perf})$ of objects $M$ such that 
	\[
	\LL j_\sharp\LL j^* M\ra M\ra \RR i_*\LL i^*M\ra
	\]
	is an distinguished triangle is closed under cones, and under small sums. We can then equivalently prove the claim in the subcategory $\RigDA^{\ct}_{\et}(B^{\Perf})$ of compact objects, since these motives generate $\RigDA^{\eff}_{\et}(B^{\Perf})$ as a triangulated category with small sums.
	
	By means of Lemma \ref{DA=limDA} and Proposition \ref{Ri*}, it suffices to prove
	the statement for  each category $\RigDA^{\eff}_{\et}(B^{(-n)} )$. It is then enough to prove the claim for the categories $\RigDA^{\eff}_{\Nis}(B^{(-n)} )$ as defined in \cite[Definition 1.4.12]{ayoub-rig} since $\RigDA^{\eff}_{\et}(B^{(-n)} )$ is a further localization of $\RigDA^{\eff}_{\Nis}(B^{(-n)} )$. In this case, the statement is proved in  \cite[Theorem 1.4.20]{ayoub-rig}.
\end{proof}

\begin{lemma}\label{DA=limDA} Let $B$ be a  normal variety over $K$. 
	The canonical functors $\RigSm/B^{(-n)}\ra\RigSm/B^{\Perf}$ induce a triangulated equivalence of categories \[\varinjlim_n\RigDA^{\ct}_{\et}(B^{(-n)} )\cong\RigDA^{\ct}_{\et}(B^{\Perf} )\]
	where we denote by $\RigDA^{\ct}_{\et}(B^{(-n)} )$ [resp.  with $\RigDA^{\ct}_{\et}(B^{\Perf} )$] the subcategory  of compact objects of $\RigDA^{\eff}_{\et}(B^{(-n)} )$ [resp.  of $\RigDA^{\eff}_{\et}(B^{\Perf} )$].
\end{lemma}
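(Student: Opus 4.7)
The plan is to establish essential surjectivity and full faithfulness of the canonical functor
$$F \colon \varinjlim_n \RigDA^{\ct}_{\et}(B^{(-n)}) \longrightarrow \RigDA^{\ct}_{\et}(B^{\Perf})$$
separately. Both source and target are compactly generated by shifts of representable motives, and by Definition \ref{RigSmBPerf} every object $(X,-n)$ of $\RigSm/B^{\Perf}$ arises from some $X \in \RigSm/B^{(-n)}$. Hence each compact generator $\Lambda(X,-n)$ of the target is hit by $F$ through the level-$n$ summand of the source. Since a filtered 2-colimit of triangulated categories along triangulated functors remains triangulated and closed under cones and retracts, essential surjectivity follows.

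For full faithfulness, fix $n$ and $X, Y \in \RigSm/B^{(-n)}$. Writing $f_{n,m} \colon B^{(-m)} \to B^{(-n)}$ for the canonical Frobenius map and $X_m := f_{n,m}^* X$, the content reduces to showing that
$$\varinjlim_{m \geq n} \Hom_{\RigDA^{\eff}_{\et}(B^{(-m)})}(\Lambda(X_m), \Lambda(Y_m)[i]) \;\longrightarrow\; \Hom_{\RigDA^{\eff}_{\et}(B^{\Perf})}(\Lambda(X,-n), \Lambda(Y,-n)[i])$$
is bijective for every $i \in \Z$, because by d\'evissage this extends to all pairs of compact objects on either side. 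Each such Hom set is the $i$-th cohomology of the value at the source object of a fibrant replacement of the target in the corresponding $\Ch_{\et,\B^1}\Psh$ model structure.

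The main technical step, and the principal obstacle I anticipate, is constructing these fibrant replacements compatibly along the tower. Writing $R_m$ for a functorial $(\et,\B^1)$-fibrant replacement at level $m$ and $\iota_m \colon \RigSm/B^{(-m)} \to \RigSm/B^{\Perf}$ for the canonical functor, the aim is to show that $\varinjlim_{m} (\iota_m)_\sharp R_m(\Lambda(Y_m))$ is a fibrant replacement of $\Lambda(Y,-n)$ in $\Ch_{\et,\B^1}\Psh(\RigSm/B^{\Perf})$. This decomposes into two independent commutation claims: \'etale sheafification commutes with the filtered colimit along $m$, since any \'etale hypercover in $\RigSm/B^{\Perf}$ is refined by one pulled back from a finite level (exactly as in Remark \ref{e*}); and $\B^1$-localization commutes with the colimit, because $\B^1$ is defined uniformly at every level, so the Suslin singular construction does. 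Evaluating at $(X,-n)$ and using exactness of filtered colimits of $\Lambda$-modules then yields the displayed bijection.
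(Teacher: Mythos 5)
Your overall strategy agrees with the paper's: reduce to full faithfulness on representable generators, build a candidate fibrant replacement $\hat R$ of $\Lambda(\bar Y)$ in $\Ch\Psh(\RigSm/B^{\Perf})$ as a filtered colimit of fibrant replacements at finite levels, and decompose the verification that $\hat R$ is $(\et,\B^1)$-local into two separate claims. (The paper does exactly this in its Steps 1--6.) However, your treatment of the $\et$-locality of $\hat R$ contains a genuine gap.

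Saying that ``\'etale sheafification commutes with the filtered colimit along $m$'' is not the statement you need. Being $\et$-local means that $\hat R$ computes \'etale \emph{hyper}cohomology, i.e., that for every $U$ one has $\HH^k_{\et}(\bar U,\hat R)\cong H_{-k}\hat R(\bar U)$, which is a statement about the interaction of a filtered colimit with a totalization over hypercovers. Filtered colimits do not commute with totalizations of cosimplicial complexes in general. The hypercover-refinement observation (as in Remark \ref{e*}) tells you that hypercovers of $\bar U$ come from finite levels, but after writing down each level-$m$ descent equivalence $\alpha_m^*R(U_m)\xrightarrow{\sim}\Tot\,\alpha_m^*R(\mcV_m)$ and passing to the colimit, you still need $\colim_m\Tot\cong\Tot\,\colim_m$, which requires a convergence argument. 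The paper supplies it: it invokes \cite[Theorem VI.8.7.3]{SGAIV2} (continuity of \'etale cohomology over a 2-limit of sites) together with a hypercohomology spectral sequence, and the convergence of that spectral sequence uses that the relevant sites have \emph{bounded cohomological dimension}, which is where the hypothesis that $\Lambda$ is a $\Q$-algebra enters in an essential way. Your proposal never mentions this boundedness, and without it the $\et$-locality of $\hat R$ does not follow.

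A smaller point: to speak of $\varinjlim_m(\iota_m)_\sharp R_m(\Lambda(Y_m))$ you need the level-$m$ fibrant replacements to be compatible along the Frobenius tower. Just picking a ``functorial fibrant replacement'' at each level does not by itself produce a sequential diagram indexed by $m$; the paper circumvents this by taking a single fibrant replacement $R$ of $\alpha_{0*}\Lambda(Y)$ in the model category $\Ch\Psh(\RigSm/\mcB)$ of presheaves over the diagram $\mcB$, and then restricting to each level via $\alpha_n^*$. You should either adopt this device or otherwise justify that your $R_m$ assemble into a direct system. Finally, your essential-surjectivity paragraph reads as if surjectivity is established independently of full faithfulness; the clean phrasing, as in the paper, is that once full faithfulness on representables (and their shifts) is known, the functor identifies the source with a thick subcategory containing a set of compact generators, hence with all compact objects of the target.
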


\begin{proof}
	The functor $\RigDA^{\eff}_{\et}(B^{(-n)} )\ra\RigDA^{\eff}_{\et}(B^{\Perf} )$ is triangulated and sends the objects $\Lambda(X)[i]$ which are compact generators of the first category,  to compact objects of the second. It then induces an exact functor between the two subcategories of compact objects. Moreover, by letting $n$ vary, the images of the objects in $\RigDA^{\ct}_{\et}(B^{(-n)} )$ generate the category $\RigDA^{\ct}_{\et}(B^{\Perf} )$.

	Up to shifting indices, it therefore suffices to show that for $X$, $Y$ in $\RigSm/B$  one has  
	\[\varinjlim_n\RigDA^{\eff}_{\et}(B^{(-n)} )(\Lambda(X\times_BB^{(-n)}),\Lambda(Y\times_BB^{(-n)}))\cong\RigDA^{\eff}_{\et}(B^{\Perf} )(\Lambda(\bar{X}),\Lambda(\bar{Y}))\]
	where we denote by $\bar{X}=(X,0)$ and $\bar{Y}=(Y,0)$ the object of $\RigSm/B^{\Perf}$ associated to $X$ resp. $Y$. 
	To this aim, we simply follow the proof of \cite[Proposition 1.A.1]{ayoub-rig}. For the convenience of the reader, we reproduce it here. 
	
	\textit{Step 1}: We consider the  directed diagram $\mcB$ formed the maps $B^{(-n-1)}\ra B^{(-n)}$ and we let $\RigSm/\mcB$ be the  the category of
	rigid smooth varieties over it as defined in \cite[Section 1.4.2]{ayoub-rig}. We can endow the category $\Ch\Psh(\RigSm/\mcB)$ with the $(\et,\B^1)$-local model structure, and consider the Quillen adjunctions induced by the map of diagrams $\alpha_n\colon B^{(-n)}\ra\mcB$, $f_{nm}\colon B^{(-n)}\ra B^{(-m)}$:
	\[
	\begin{aligned}
	\adj{\alpha_n^*}{\Ch\Psh(\RigSm/\mcB)}{\Ch\Psh(\RigSm/{B^{(-n)}})}{\alpha_{n*}}\\
	\adj{\alpha_{n\sharp}}{\Ch\Psh(\RigSm/{B^{(-n)}})}{\Ch\Psh(\RigSm/\mcB)}{\alpha_{n}^*}\\
	\adj{f_{nm}^*}{\Ch\Psh(\RigSm/{B^{(-m)}})}{\Ch\Psh(\RigSm/{B^{(-n)}})}{f_{nm*}}
	\end{aligned}
	\]
	We also remark that 
	the canonical map $\RigSm/{B^{(-n)}}\ra\RigSm/B^{\Perf}$ induces a Quillen adjunction
	\[
	\adj{f_{\infty n}^*}{\Ch\Psh(\RigSm/{B^{(-n)}})}{\Ch\Psh(\RigSm/{B^{\Perf}})}{f_{\infty n*}}.
	\]
	Consider a trivial cofibration $ \alpha_{0*}\Lambda(Y)\ra R$ with target $R$ that is $(\et,\B^1)$-fibrant. Since $\alpha^*_{n}$ is a left and right Quillen functor and $\alpha^*_n\alpha_{0*}=f_{n0}^*$ we deduce that the map $\Lambda(Y\times_BB^{(-n)})=f_{n0}^*\Lambda(Y)\ra \alpha_n^*R $ is also an $(\et,\B^1)$-trivial cofibration with an $(\et,\B^1)$-fibrant target.

	\textit{Step 2}: By applying the left Quillen functors $f_{nm}^*$ and $f_{\infty m}^*$ we also obtain that $f_{n0}^*\Lambda(Y)=f_{nm}^*f_{m0}^*\Lambda(Y)\ra f_{nm}^*\alpha_m^*R$ and $f_{\infty0}^*\Lambda(Y)=f_{\infty m}^*f_{m0}^*\Lambda(Y)\ra f_{\infty m}^*\alpha_m^*R $ are $(\et,\B^1)$-trivial cofibrations. By the 2-out-of-3 property of weak equivalences applied to the composite map 
	\[
	f_{n0}^*\Lambda(Y)\ra f_{nm}^*\alpha_m^*R\ra \alpha_n^*R
	\]
	we then deduce that the map $f_{nm}^*\alpha_m^*R\ra \alpha_n^*R$ is an $(\et,\B^1)$-weak equivalence. 
	
	\textit{Step 3}: We now claim that the natural map $\Lambda(\bar{Y})\rightarrow \hat{R}$ with $\hat{R}:= \colim_nf_{\infty n}^*\alpha_i^*R$ is an $(\et,\B^1)$-weak equivalence in $\Ch\Psh(\RigSm/{B^{\Perf}})$. By what shown in Step 2, it suffices to prove that the functor \[\colim\colon \Ch\Psh(\RigSm/{B^{\Perf}})^{\N}\ra\Ch\Psh(\RigSm/{B^{\Perf}})\] preserves $(\et,\B^1)$-weak equivalences. First of all, we remark that it is a Quillen left functor with respect to the projective model structure on the diagram category  $\Ch\Psh(\RigSm/{B^{\Perf}})^{\N}$ induced by the point-wise $(\et,\B^1)$-structure. Hence, it preserves $(\et,\B^1)$-weak equivalences between cofibrant objects. On the other hand, as directed colimits commute with homology, it also preserves weak equivalences of presheaves. Since any complex is quasi-isomorphic to a cofibrant one, we deduce the claim.
	
	\textit{Step 4}: We now prove that $\hat{R}$ is $\B^1$-local. Consider a variety $U$ smooth over $B^{(-n)}$. 
	From the formula
	\[
	\hat{R}(\bar{U})=\colim_{m\geq n}\alpha^*_{m}R(U\times_{B^{(-n)}}B^{(-m)})
	\]
	and the fact that $\alpha^*_mR$ is $\B^1$-local, we deduce a quasi-isomorphism $\hat{R}(U)\cong\hat{R}(\B^1_U)$ as wanted.
	
	\textit{Step 5}: We now prove that $\hat{R}$ is $\et$-local. It suffices to show that for any   $U$ smooth over $B^{(-n)}$  one has  $\HH^k_{\et}(\bar{U},\hat{R})\cong H_{-k}\hat{R}(\bar{U})$. 
	The topos associated to $\Et/U$ is equivalent to the one of  $\varinjlim\Et/(U\times_{B^{(-n)}}B^{(-m)})$ and all these sites have a bounded cohomological dimension since $\Lambda$ is a $\Q$-algebra. 
	By applying 
	\cite[Theorem VI.8.7.3]{SGAIV2} together with a spectral sequence argument given by \cite[Theorem 0.3]{sv-bk}, we then deduce the formula
	\[
	\HH^k_{\et}(\bar{U},\hat{R})\cong\colim_m\HH^k_{\et}(U\times_{B^{(-n)}}B^{(-m)},\alpha_m^*R). 
	\]
	On the other hand, as $\alpha_m^*R$ is $\et$-local, we conclude that
	\[\colim_m\HH^k_{\et}(U\times_{B^{(-n)}}B^{(-m)},\alpha_i^*R)\cong\colim_mH_{-k}(\alpha_m^*R)(U\times_{B^{(-n)}}B^{(-m)})\cong H_{-k} \hat{R} (\bar{U})
	\]
	proving the claim.
	
	\textit{Step 6}: From Steps 3-5, we conclude that we can compute ${\RigDA^{\eff}_{\et}(B^{\Perf)})}(\Lambda(\bar{X}),\Lambda(\bar{Y}))$ as $\hat{R}(\bar{X})$ which coincides with $\colim_n(\alpha_n^*R)(X\times_{B}B^{(-n)})$. By what is proved in Step 1, we also deduce that $\alpha_n^*R$ is a $(\et,\B^1)$-fibrant replacement of $\Lambda(Y\times_{B}B^{(-n)})$ and hence the last group coincides with $\colim_n{\RigDA^{\eff}_{\et}(B^{(-n)})} (\Lambda(X\times_{B}B^{(-n)}),\Lambda({Y\times_{B}B^{(-n)}}))$ proving the statement.
\end{proof}

\section{The equivalence between motives with and without transfers}\label{DADMsec}

We can finally present the main result of this paper. We recall that the ring of coefficients $\Lambda$ is assumed to be a $\Q$-algebra.

\begin{thm}\label{DA=DMp-app}
	Let $B$ be a  normal variety over $K$. The functor $a_{tr}$ induces an equivalence of triangulated categories:
	\[
	{\LL a_{tr}}\colon{\RigDA^{\eff}_{\Frobet}}(B^{\Perf} )\cong{\RigDM^{\eff}_{\et}}(B^{\Perf} ).
	\]
\end{thm}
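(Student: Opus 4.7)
My plan is to present $\LL a_{tr}$ as an identification between two subcategories of a common ambient derived category and then show, by a direct descent argument, that these subcategories in fact coincide.

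The two key identifications rely on material already established. On one hand, Corollary \ref{perfsm} together with Remark \ref{addtransfer} embeds $\RigDA^{\eff}_{\Frobet}(B^{\Perf})$ into $\catD_{\Frobet,\B^1}(\RigNor/B)$ as the full subcategory generated by (the images of) smooth varieties. On the other hand, by construction the morphisms in $\RigCor/B^{\Perf}$ are sections of representable $\fh$-sheaves on $\RigNor/B$, which realizes $\RigDM^{\eff}_{\et}(B^{\Perf})$ as a full subcategory of $\catD^{\fh}_{\et,\B^1}(\RigNor/B)$; Proposition \ref{compcor} in turn identifies this latter category with the $\fh$-local objects inside $\catD_{\Frobet,\B^1}(\RigNor/B)$. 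Under both identifications, $\LL a_{tr}$ becomes (up to restricting to the smooth generators) the $\fh$-localization functor on $\catD_{\Frobet,\B^1}(\RigNor/B)$.

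The crux of the proof is therefore to show that every $(\Frobet,\B^1)$-local complex on $\RigNor/B$ is automatically $\fh$-local, i.e., that the $\fh$-localization functor is already the identity on this subcategory. By Remark \ref{pseudoG} this reduces to verifying that for every pseudo-Galois cover $Y\to X$ of normal affinoids with Galois group $G$, one has $\mcF(X)\cong\mcF(Y)^G$. Since the extension $K(Y)/K(X)$ factors as a Galois extension composed with a purely inseparable one, a sufficiently high Frobenius twist $Y^{(-n)}\to X^{(-n)}$ becomes an honest Galois cover with group $G$. Then $\Frob$-locality yields $\mcF(X)\cong\mcF(X^{(-n)})$ and $\mcF(Y)\cong\mcF(Y^{(-n)})$, while étale descent together with the exactness of the $G$-invariants functor on $\Q[G]$-modules (as in the proof of Proposition \ref{fhet}) provides $\mcF(X^{(-n)})\cong\mcF(Y^{(-n)})^G$, combining to give the desired descent formula.

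The main obstacle will be the technical bookkeeping required to set up these identifications cleanly and to verify that $\LL a_{tr}$ really corresponds to $\fh$-localization through them. In particular, one must check compatibility of the equivalences with $\B^1$-localization and with the passage from the site $\RigSm/B^{\Perf}$ to the larger site $\RigNor/B$; this should follow from Proposition \ref{quillenpairs} and Proposition \ref{Ri*} for the basic functoriality, from Lemma \ref{DA=limDA} for the reduction to the finite levels $B^{(-n)}$, and from the locality axiom (Theorem \ref{locality}) to extend descent statements from smooth varieties to singular normal ones via Noetherian induction on the singular locus. Once the ambient identification is in place, the equivalence claimed in the theorem follows formally from the fact that $\LL a_{tr}$ and $\RR o_{tr}$ then match, respectively, the inclusion and the left adjoint of the inclusion of a subcategory which turns out to be the whole ambient category.
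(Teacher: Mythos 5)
Your overall architecture matches the paper's: embed both motivic categories fully faithfully into the common ambient category built on $\RigNor/B$, and win by showing the essential images coincide. However, the step you identify as the crux is both over-stated and incorrectly justified, and this is a genuine gap.

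First, you claim that \emph{every} $(\Frobet,\B^1)$-local complex on $\RigNor/B$ is automatically $\fh$-local, so that the $\fh$-localization is the identity on all of $\catD_{\Frobet,\B^1}(\RigNor/B)$. That claim is stronger than what is needed and is almost certainly false; it would identify $\catD_{\Frobet,\B^1}(\RigNor/B)$ with $\catD^{\fh}_{\et,\B^1}(\RigNor/B)$ wholesale, and your concluding remark (``a subcategory which turns out to be the whole ambient category'') is symptomatic of this confusion. What the argument actually needs, and what the paper proves in Proposition \ref{arefhlocal}, is that the objects \emph{in the image of} $\LL i^*$ (equivalently, those generated by smooth varieties) are $\fh$-local. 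Neither fully faithful functor is essentially surjective onto $\catD^{\fh}_{\et,\B^1}(\RigNor/B)$; rather, both $\LL i^*$ and $\LL j^*$ have the same essential image because they send compact generators to the same objects and commute with small sums.

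Second and more seriously, your proposed mechanism for establishing $\fh$-locality is flawed. You argue that after a Frobenius twist $Y^{(-n)}\to X^{(-n)}$ becomes ``an honest Galois cover'' and then invoke \'etale descent. Frobenius twisting removes the purely inseparable part of $K(Y)/K(X)$, but it does nothing about ramification: a pseudo-Galois cover of normal varieties with Galois function field extension can still be ramified over a divisor (already $\Spec K[y]\to\Spec K[x]$, $y^2=x$, with $\car K\neq 2$ shows this), and then $Y^{(-n)}\to X^{(-n)}$ is not an \'etale cover. \'Etale descent together with exactness of $G$-invariants, as in Proposition \ref{fhet}, applies along $\et$-hypercovers, not along a ramified finite map. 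Note also that Proposition \ref{fhet} runs in the opposite direction from what you need: it deduces $\fhet$-descent from $\fh$-descent, whereas here one must deduce $\fh$-descent from $(\Frobet,\B^1)$-descent, which is the genuinely hard direction. The paper instead stratifies $X$ into locally closed normal pieces over which the pseudo-Galois cover becomes a composition of \'etale and $\Frob$-covers (Lemma \ref{strat}) and glues using the locality axiom (Theorem \ref{locality}), together with Lemmas \ref{B.5}, \ref{B.7} and \ref{conservative}. Crucially, this stratification argument is carried out after pulling the motive $M$ back to $\RigDA^{\eff}_{\Frobet}(X_i^{\Perf})$, which is only available for $M$ in the image of $\LL i^*$; it is not a formal statement about arbitrary $(\Frobet,\B^1)$-local objects on $\RigNor/B$. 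In your write-up the locality axiom is relegated to ``technical bookkeeping,'' but it is in fact the central input that makes the $\fh$-locality argument work, and only in the restricted scope the paper uses it in.
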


As a corollary, we obtain the two following results, which are indeed our main motivation.

\begin{thm}\label{DA=DMp-appK}
	The functor $a_{tr}$ induces an equivalence of triangulated categories:
	\[
	{\LL a_{tr}}\colon{\RigDA^{\eff}_{\Frobet}}(K )\cong{\RigDM^{\eff}_{\et}}(K ).
	\]
\end{thm}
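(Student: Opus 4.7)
The corollary is the specialization $B=\Spa K$ of Theorem~\ref{DA=DMp-app}, since $K$ being perfect means $(\Spa K)^{(-n)}\cong\Spa K$ canonically for every $n$, so the 2-colimit $\RigSm/(\Spa K)^{\Perf}$ reduces to $\RigSm/K$ with its natural $\Frob$- and $\et$-topologies and both sides of Theorem~\ref{DA=DMp-appK} become identical to those of the main theorem. The only real work is therefore to prove Theorem~\ref{DA=DMp-app}, for which I outline a plan.

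The strategy is to factor both sides of the comparison through a common intermediate category built on $\RigNor/B$, exploiting the fact (Remark~\ref{pseudoG}) that with $\Q$-coefficients the $\fh$-topology on normal varieties is generated by pseudo-Galois covers, which canonically decompose into a Galois piece (corresponding, via the $G$-invariants formula and exactness of rational $G$-invariants, to the extra transfer-functoriality) and a purely inseparable piece (corresponding to $\Frob$-coverings). The candidate intermediate category is $\catD^{\fh}_{\et,\B^1}(\RigNor/B)$, which by Proposition~\ref{compcor} is realized as the subcategory of $\fh$-local objects inside $\catD_{\Frobet,\B^1}(\RigNor/B)$. Concretely, I would construct functors
\[
\Psi\colon \RigDA^{\eff}_{\Frobet}(B^{\Perf}) \longrightarrow \catD^{\fh}_{\et,\B^1}(\RigNor/B),
\qquad
\Phi\colon \RigDM^{\eff}_{\et}(B^{\Perf}) \longrightarrow \catD^{\fh}_{\et,\B^1}(\RigNor/B),
\]
with $\Psi$ induced by the functor $\Sh_{\Frob}(\RigSm/B^{\Perf}) \to \Sh_{\Frob}(\RigNor/B)$ of Remark~\ref{addtransfer}, $(X,-n)\mapsto X$, followed by $\fh$-sheafification, and $\Phi$ induced by restricting a presheaf with transfers on $\RigSm/B^{\Perf}$ to an $\fh$-sheaf on $\RigNor/B$ via pseudo-Galois descent. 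A natural isomorphism $\Phi\circ\LL a_{tr}\cong \Psi$ would then reduce the theorem to showing that both $\Phi$ and $\Psi$ are equivalences.

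That $\Phi$ is an equivalence essentially reformulates the rational-coefficient comparison between $\et$-motives with transfers and $\fh$-motives on normal varieties, as already developed in \cite[Section 2.2]{ayoub-rig} and encapsulated in the pseudo-Galois characterization of Remark~\ref{pseudoG}. The main obstacle will be proving that $\Psi$ is an equivalence, i.e., that every $(\Frobet,\B^1)$-local motive without transfers on $\RigSm/B^{\Perf}$ automatically satisfies $\fh$-descent on $\RigNor/B$, and conversely that every $\fh$-local object on the right is accounted for. I would approach this by reducing, via the locality axiom (Theorem~\ref{locality}) and the compact-limit description of Lemma~\ref{DA=limDA}, to a single Galois descent statement: for a Galois cover $Y\to X$ of smooth varieties over some $B^{(-n)}$ with group $G$, the canonical map $\Lambda(X)\to \Lambda(Y)^G$ is an isomorphism in the $(\et,\B^1)$-localized derived category with $\Q$-coefficients. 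This is precisely the content of Ayoub's characteristic-$0$ effective comparison \cite[Theorem B.1]{ayoub-h1}; the entire point of the perfected base $B^{\Perf}$ together with the $\Frob$-localization is to collapse all purely inseparable pseudo-Galois covers on the source of $\Psi$, so that what remains is exactly Galois descent, reducible level-by-level to the essentially characteristic $0$ situation Ayoub has already handled.
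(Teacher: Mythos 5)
Your reduction to the case $B=\Spa K$ is exactly the paper's own argument: since $K$ is perfect, $(\Spa K)^{(-n)}\cong\Spa K$ canonically, so $\RigDA^{\eff}_{\Frobet}((\Spa K)^{\Perf})\cong\RigDA^{\eff}_{\Frobet}(K)$ and $\RigDM^{\eff}_{\et}((\Spa K)^{\Perf})\cong\RigDM^{\eff}_{\et}(K)$, and the claim is a literal specialization of Theorem~\ref{DA=DMp-app}. That much is correct.

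There is, however, a genuine gap in the plan you sketch for Theorem~\ref{DA=DMp-app} itself. You say the theorem reduces to ``showing that both $\Phi$ and $\Psi$ are equivalences.'' This is too strong and in fact false: the functors $\LL i^*\colon\RigDA^{\eff}_{\Frobet}(B^{\Perf})\to\catD^{\fh}_{\et,\B^1}(\RigNor/B)$ and $\LL j^*\colon\RigDM^{\eff}_{\et}(B^{\Perf})\to\catD^{\fh}_{\et,\B^1}(\RigNor/B)$ are not essentially surjective. The target is generated by the motives $\Lambda(X)$ of \emph{all} normal $B$-varieties $X$, while the source categories are generated by motives of \emph{smooth} objects over $B^{(-n)}$; nothing forces a non-smooth $\Lambda(X)$ to lie in the essential image of either. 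What is true, and what the paper proves, is only that $\LL i^*$ and $\LL j^*$ are fully faithful. From $\LL j^*\circ\LL a_{\tr}\cong\LL i^*$ and fully faithfulness of $\LL j^*$ and $\LL i^*$, one deduces that $\LL a_{\tr}$ is fully faithful. The equivalence is then completed by a separate, elementary compactness argument (Proposition~\ref{step1a}): $\LL a_{\tr}$ is triangulated, commutes with small sums, and maps a set of compact generators of $\RigDA^{\eff}_{\Frobet}(B^{\Perf})$ to a set of compact generators of $\RigDM^{\eff}_{\et}(B^{\Perf})$, whence it is also essentially surjective. Your outline never supplies this essential surjectivity step, and the route you propose to replace it (equivalence of $\Phi$, $\Psi$ onto the common target) does not hold.

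A secondary, smaller discrepancy: the Galois-descent input you invoke is not quite a cover of objects smooth over $B^{(-n)}$, nor is it obtained by appealing to Ayoub's characteristic-zero theorem level-by-level. The heart of the matter (Proposition~\ref{arefhlocal}) is that $\LL i^*M$ is $\fh$-local; this is proved by stratifying the base of a pseudo-Galois cover $X'\to X$ in $\RigNor/B$ (Lemma~\ref{strat}) into pieces over which the cover is a composition of an \'etale cover and a $\Frob$-cover, using the locality axiom (Theorem~\ref{locality}) to reduce to the strata, collapsing the $\Frob$-parts by Proposition~\ref{quillenpairs}, and handling the \'etale parts by the split-cover argument with $G$ acting transitively on fibers. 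This is self-contained rather than a reduction to the characteristic-$0$ theorem.
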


\begin{thm}\label{DA=DM-app}
	Let $B$ be a normal  variety over a field $K$ of characteristic $0$. The functor $a_{tr}$ induces an equivalence of triangulated categories:
	\[
	{\LL a_{tr}}\colon{\RigDA^{\eff}_{\et}}(B )\cong{\RigDM^{\eff}_{\et}}(B ).
	\]
\end{thm}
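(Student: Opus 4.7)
The plan is to deduce this as an immediate corollary of Theorem \ref{DA=DMp-app}, by observing that in characteristic zero every Frobenius construction in the paper trivializes. First I would unpack the conventions set up in Section \ref{frobtop}: since $\car K = 0$, the author explicitly sets $X^{(n)} := X$ and takes $\Phi \colon X^{(n-1)} \to X^{(n)}$ to be the identity for every $n \in \Z$. Consequently each transition functor $\RigSm/B^{(-n-1)} \to \RigSm/B^{(-n)}$ appearing in the $2$-colimit of Definition \ref{RigSmBPerf} is the identity, so the colimit collapses and $\RigSm/B^{\Perf}$ is canonically identified with $\RigSm/B$ (and likewise $\RigCor/B^{\Perf}$ with $\RigCor/B$).

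Next I would check that under this identification the $\Frob$-topology on $\RigSm/B^{\Perf}$ becomes trivial. By Proposition \ref{frobcov}, a presheaf $\mcF$ is a $\Frob$-sheaf if and only if $\mcF(X^{(-1)},-n-1) \to \mcF(X,-n)$ is an isomorphism, which holds tautologically because both sides equal $\mcF(X,-n)$; equivalently, the class $\Phi$ in Corollary \ref{perfsm} consists of identity maps, so its saturation contains only isomorphisms and every presheaf is already a $\Frob$-sheaf. Hence $\tau_{\Frobet} = \tau_{\et}$ on $\RigSm/B^{\Perf} = \RigSm/B$, and the $(\B^1,\Frobet)$-local model structure on $\Ch\Psh(\RigSm/B^{\Perf})$ coincides with the $(\B^1,\et)$-local model structure on $\Ch\Psh(\RigSm/B)$. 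The same comparison holds on the level of presheaves with transfers.

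Combining these identifications, one obtains
\[
\RigDA^{\eff}_{\Frobet}(B^{\Perf}) = \RigDA^{\eff}_{\et}(B), \qquad \RigDM^{\eff}_{\et}(B^{\Perf}) = \RigDM^{\eff}_{\et}(B),
\]
and the derived adjunction $\LL a_{\tr}$ of Theorem \ref{DA=DMp-app} is transported to the derived adjunction $\LL a_{\tr}$ of the present statement. The equivalence then follows by direct specialization of Theorem \ref{DA=DMp-app}. There is no substantive obstacle in this argument; the only point requiring attention is the unambiguous application of the characteristic-zero convention $X^{(n)} = X$, $\Phi = \id$, which ensures that the perfection $B^{\Perf}$ and the $\Frob$-localization appearing in the main theorem both become vacuous operations in this setting.
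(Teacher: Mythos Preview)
Your proposal is correct and matches the paper's own treatment: the paper presents this theorem as an immediate corollary of Theorem \ref{DA=DMp-app} without giving a separate proof, relying precisely on the characteristic-zero conventions you cite (that $X^{(n)}=X$, $\Phi=\id$, and the $\Frob$-topology is trivial) together with the remark preceding Proposition \ref{frobetlocal} that $\RigDM^{\eff}_{\et}(B^{\Perf})=\RigDM^{\eff}_{\et}(B)$ when $\car K=0$. Your unpacking of why the perfection and the $\Frobet$-localization become vacuous is exactly the intended argument.
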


The proof of Theorem \ref{DA=DMp-app} is divided into the following steps.

\begin{enumerate}
	\item We first produce a functor $\LL a_{\tr}\colon{\RigDA^{\eff}_{\Frobet}}(B^{\Perf} )\ra{\RigDM^{\eff}_{\et}}(B^{\Perf} )$ commuting with sums, triangulated, sending a set of compact generators of the first category into a set of compact generators of the second.
	\item We define a fully faithful functor $\LL i^*\colon {\RigDA^{\eff}_{\Frobet}}(B^{\Perf} )\ra\catD_{\Frobet,\B^1}^{\fh}(\RigNor/B )$.
	\item We define a fully faithful functor $\LL j^*\colon {\RigDM^{\eff}_{\et}}(B^{\Perf} )\ra\catD_{\Frobet,\B^1}^{\fh}(\RigNor/B )$.
	\item We check that $\LL j^*\circ\LL a_{\tr}$ is isomorphic to $\LL i^*$ proving that $\LL a_{\tr}$ is also fully faithful.
\end{enumerate}

We now prove the first step.  

\begin{prop}\label{step1a}
	Let $B$ be a normal variety over $K$. 
	The functor $a_{\tr}$ induces a triangulated functor \[\LL a_{\tr}\colon {\RigDA^{\eff}_{\Frobet}}(B^{\Perf})\ra{\RigDM^{\eff}_{\et}}(B^{\Perf})\] commuting with sums, sending a set of compact generators of the first category into a set of compact generators of the second.
\end{prop}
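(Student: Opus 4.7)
The plan is to verify that the pair $(a_{\tr}, o_{\tr})$ descends to a Quillen adjunction between $\Ch_{\Frobet, \B^1}\Psh(\RigSm/B^{\Perf})$ and $\Ch_{\et, \B^1}\PST(\RigSm/B^{\Perf})$, and then to deduce the remaining properties formally. By Proposition \ref{rigdmperfet}, $(a_{\tr}, o_{\tr})$ is already a Quillen adjunction for the $(\et, \B^1)$-localizations on both sides (this is essentially the definition of $\Ch_{\et, \B^1}\PST$). By the universal property of left Bousfield localization together with the description of the $\Frobet$-localization from Proposition \ref{frobetlocal}, it then suffices to check that $a_{\tr}$ sends each additional generating map of the Frob-localization, namely $\Lambda(X^{(-1)}, -n-1) \to \Lambda(X, -n)$ and its shifts, to a weak equivalence in the target.

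The key observation is that $a_{\tr}$ sends these Frobenius maps not merely to weak equivalences, but to genuine isomorphisms in $\PST(\RigSm/B^{\Perf})$. Indeed, by definition the morphisms in $\RigCor/B^{\Perf}$ are computed in $\Sh_{\fh}(\RigNor/B)$, and every $\fh$-sheaf is a fortiori a $\Frob$-sheaf, the $\fh$-topology being finer than the $\Frob$-topology. By Proposition \ref{frobcov}, any $\Frob$-sheaf $\mcF$ satisfies $\mcF(X^{(-1)}) \cong \mcF(X)$, so Yoneda yields a canonical isomorphism $\Lambda_{\tr}(X^{(-1)}, -n-1) \cong \Lambda_{\tr}(X, -n)$ in $\PST(\RigSm/B^{\Perf})$, as required.

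Once the Quillen adjunction is established, the derived functor $\LL a_{\tr}$ is triangulated (being a left Quillen functor between stable model categories) and commutes with small direct sums as any left adjoint. By construction $\LL a_{\tr}\Lambda(X, -n) = \Lambda_{\tr}(X, -n)$, and these two families form respective sets of generators of $\RigDA^{\eff}_{\Frobet}(B^{\Perf})$ and $\RigDM^{\eff}_{\et}(B^{\Perf})$ by the very construction of the model structures and Proposition \ref{rigdmperfet}.

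The main (mild) subtlety I expect is verifying that compactness is preserved under each of the Bousfield localizations at play, so that the representables $\Lambda(X, -n)$ and $\Lambda_{\tr}(X, -n)$ remain compact in the homotopy categories, not merely generating. This is standard but deserves a line of justification: in each case the localization is performed at a set of maps between representable (hence $\omega$-small) cofibrant objects, so the fibrant replacement functor commutes with directed colimits on representables, implying that the images of the $\Lambda(X, -n)$ and $\Lambda_{\tr}(X, -n)$ are compact. Alternatively, one may use Lemma \ref{DA=limDA} and its evident analogue for transfers to reduce compactness to the known case of $\RigDA^{\eff}_{\et}(B^{(-n)})$ and $\RigDM^{\eff}_{\et}(B^{(-n)})$.
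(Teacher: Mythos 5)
Your proof follows essentially the same route as the paper: start from the Quillen adjunction at the $(\et,\B^1)$-level (Proposition \ref{rigdmperfet}), then observe that the Frob-localizing maps $\Lambda(X^{(-1)},-n-1)\to\Lambda(X,-n)$ are sent by $a_{\tr}$ to actual isomorphisms because $X^{(-1)}\to X$ is already invertible in $\RigCor/B^{\Perf}$, so the universal property of Bousfield localization makes $\LL a_{\tr}$ descend; the remaining assertions (triangulated, commutes with sums, carries compact generators to compact generators) are then formal. The extra attention you pay to why the representables stay compact through the localizations fills in a detail the paper leaves implicit, but the substance of the argument is identical.
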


\begin{proof}
	The functor $a_{\tr}$ induces a Quillen functor \[\LL a_{\tr}\colon\Ch_{\et}\Psh(\RigSm/B^{\Perf})\ra\Ch_{\et}\PST(\RigSm/B^{\Perf})\] sending $\Lambda(X,-n)$ to $\Lambda_{\tr}(X)$. We are left to prove that it  factors over the $\Frob$-localization, i.e. that the map $\Lambda_{\tr}(X^{(-1)})\ra\Lambda_{\tr}( X)$ is an isomorphism in $\RigDM^{\eff}_{\et}(B^{\Perf})$ for all $X\in\RigSm/B^{(-n)}$. Actually, since the map $X^{(-1)}\ra X$ induces an isomorphism of $\fh$-sheaves, we deduce that it is an isomorphism in the category $\RigCor/B^{\Perf}$ hence also in $\RigDM^{\eff}_{\et}(B^{\Perf})$.
\end{proof}

We are now ready to prove the second step.

\begin{prop}\label{B.2}
	Let $B$ be a  normal variety over $K$. The functors $\RigSm/B^{(-n)}\ra\RigNor/B$ 
	induce a fully faithful functor $$\LL i_B^*\colon {\RigDA^{\eff}_{\Frobet}(B^{\Perf} )}\ra\catD_{\Frobet,\B^1}(\RigNor/B ).$$
\end{prop}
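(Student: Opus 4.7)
The plan is to realize $\LL i_B^*$ as the left derived functor of a left Quillen functor, and then to verify full faithfulness on compact generators. The functors $\RigSm/B^{(-n)}\to\RigNor/B$ assemble into a continuous functor $\iota\colon\RigSm/B^{\Perf}\to\RigNor/B$ sending $(X,-n)$ to $X$, viewed as a normal variety over $B$ via the composition $X\to B^{(-n)}\to B$. Continuity of $\iota$ with respect to the $(\Frobet,\B^1)$-structures, which follows from Proposition~\ref{afrobet} together with the identity $\iota(\B^1_X,-n)=\B^1_{\iota(X,-n)}$, produces a Quillen pair $(\iota^*,\iota_*)$ between the corresponding localized projective model structures. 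We set $\LL i_B^*\colonequals\LL\iota^*$; at the level of representables it sends $\Lambda(X,-n)$ to $\Lambda(X)$.

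To prove that $\LL i_B^*$ is fully faithful I would check that the unit $M\to\RR\iota_*\LL\iota^*M$ is invertible for $M$ running through the compact generators $\Lambda(X,-n)$, which is sufficient because $\LL\iota^*$ preserves small sums, the objects $\LL\iota^*\Lambda(X,-n)$ remain compact, and the class of $M$ for which the unit is invertible is a localizing triangulated subcategory. Concretely, for all $(Y,-m)\in\RigSm/B^{\Perf}$ and $i\in\Z$ one must show
\[
\RigDA^{\eff}_{\Frobet}(B^{\Perf})\bigl(\Lambda(Y,-m)[i],\Lambda(X,-n)\bigr)\cong\catD_{\Frobet,\B^1}(\RigNor/B)\bigl(\Lambda(Y)[i],\Lambda(X)\bigr).
\]
Both sides should be recovered as the same filtered colimit of \'etale hypercohomology groups. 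By Lemma~\ref{DA=limDA} and the argument of its steps~1--6, the left-hand side equals $\varinjlim_{r}\HH^{-i}_{\et}(Y_r,R_r)$, where $Y_r,X_r$ denote the Frobenius base-changes to $B^{(-r)}$ and $R_r$ is an $(\et,\B^1)$-fibrant replacement of $\Lambda(X_r)$ on $\RigSm/B^{(-r)}$. For the right-hand side, Corollary~\ref{perfsm} and Proposition~\ref{afrobet} allow us to construct a $(\Frobet,\B^1)$-fibrant replacement of $\Lambda(X)$ on $\RigNor/B$ by assembling $(\et,\B^1)$-fibrant replacements $R'_r$ of $\Lambda(X_r)$ on $\RigNor/B^{(-r)}$ along the Frobenius, so the right-hand side is $\varinjlim_r\HH^{-i}_{\et}(Y_r,R'_r)$.

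The main obstacle is the level-$r$ comparison $\HH^{-i}_{\et}(Y_r,R_r)\cong\HH^{-i}_{\et}(Y_r,R'_r)$, i.e.\ the statement that the sections of an $(\et,\B^1)$-fibrant replacement of $\Lambda(X_r)$ on $\RigSm/B^{(-r)}$ and on $\RigNor/B^{(-r)}$ agree when evaluated on a smooth $Y_r$. This is a rigid analytic analog of Ayoub's embedding \cite[Theorem B.1]{ayoub-h1}: restriction from $\RigNor/B^{(-r)}$ to $\RigSm/B^{(-r)}$ is cohomologically faithful on $(\et,\B^1)$-local objects, because every normal affinoid admits \'etale-locally a presentation by a smooth affinoid and, with $\Q$-coefficients, the resulting finite Galois descent is exact. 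Once this fixed-$r$ identification is in place, compatibility with the Frobenius transition maps is automatic and the argument closes.
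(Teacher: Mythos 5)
Your setup of the adjunction is correct, but the proof strategy diverges from the paper's and leaves the crucial step unproven. The paper's argument is short and soft: since $i_B$ is fully faithful (radicial maps $B^{(-n)}\to B$ being monomorphisms forces $\Hom_{\cat_B}((Y,-m),(X,-n))=\Hom_B(Y,X)$), one has $i_{B*}i_B^*\cong\id$ already at the presheaf level; it then suffices to observe that $\RR i_{B*}=i_{B*}$, which follows because $i_{B*}$ commutes with $\Frobet$-sheafification (hence preserves $\Frobet$-equivalences) and sends $\Lambda(\B^1_V)\to\Lambda(V)$ to a $\B^1$-equivalence (so preserves $\B^1$-equivalences). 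No hypercohomology computation, no reduction to generators, and no per-level comparison on $\RigNor/B^{(-r)}$ is needed.

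The gap in your argument is the ``level-$r$ comparison'' $\HH^{-i}_{\et}(Y_r,R_r)\cong\HH^{-i}_{\et}(Y_r,R'_r)$. Unwound, this is precisely the assertion that restriction from $\RigNor/B^{(-r)}$ to $\RigSm/B^{(-r)}$ induces a fully faithful functor on $(\et,\B^1)$-local objects when evaluated on smooth representables --- i.e., it is the statement of the proposition for $B^{(-r)}$, without the Frobenius. You have therefore reduced the claim to an unproven instance of itself. The justification you offer (``every normal affinoid admits \'etale-locally a presentation by a smooth affinoid, and finite Galois descent is exact with $\Q$-coefficients'') is not a proof: it is not true that a general normal affinoid is \'etale-locally smooth, and even granting some resolution-type statement, Galois descent on the \emph{source} is not what makes the \emph{direct image} $i_{B*}$ preserve weak equivalences. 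The mechanism that actually closes the gap is the one the paper uses: $i_{B*}$ commutes with $\Frobet$-sheafification (a direct check at the level of sites), and then full faithfulness is a triviality because $i_{B*}i_B^*$ is already the identity on presheaves. A secondary issue is that Lemma~\ref{DA=limDA} is stated for the $\et$-localization, not the $\Frobet$-localization, so even the expression of the left-hand side as a filtered colimit over $r$ needs additional justification in the $\Frobet$ setting; this is handled implicitly in the paper by Lemma~\ref{froblocal} and Proposition~\ref{frobetlocal}, but your proposal does not address it.
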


\begin{proof}
	We let $\cat_B$ be the category introduced in Definition \ref{RigSmBPerf}. As already remarked in the proof of Proposition \ref{quillenpairsS} we can endow it with the $\Frobet$-topology and the topos associated to it is equivalent to the $\Frobet$-topos on $\RigSm/B^{\Perf}$. In particular, the continuous functor $i_B\colon\cat_B\ra\RigNor/B$ induces an adjunction 
	\[
	\adj{\LL i_B^*}{\RigDA^{\eff}_{\Frobet}(B^{\Perf})}{\catD_{\Frobet,\B^1}(\RigNor/B)}{\RR i_{B*}}.
	\]
	As $i_{B*}i_B^*$ is isomorphic to the identity, it suffices to show that $\R i_{B*}=i_{B*}$ so that $\R i_{B*}\LL i^*_B$ is isomorphic to the identity as well. 
	
	The functor $i_{B*}$ commutes with $\Frobet$-sheafification, and hence it preserves $\Frobet$-weak equivalences, and since $i_{B*}(\Lambda(\B^1_V))\cong  \Lambda(\B^1_B)\otimes i_{B*}(\Lambda(V))$ is weakly equivalent to $i_{B*}(\Lambda(V))$ for every $V$ in $\RigNor/B$ we also conclude that it preserves $\B^1$-weak equivalences, as wanted.
\end{proof}

\begin{rmk}\label{i*p}
	As a corollary of the proof of Proposition \ref{B.2} we obtain that the functor $i_{B*}$ preserves $(\Frobet,\B^1)$-equivalences.
\end{rmk}

We remark that the previous result does not yet prove our claim. This is reached by the following crucial fact. Its proof will demand a series of technical lemmas that are proven right below it.

\begin{prop}\label{arefhlocal}
	Let $B$ be a normal variety over $K$. 
	The image of $\LL i_B^*$ is contained in the subcategory of  $\fh$-local objects. 
\end{prop}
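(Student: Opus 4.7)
The plan is to verify the pseudo-Galois descent characterization from Remark~\ref{pseudoG} for $\LL i_B^* M$. The argument reduces to a set of compact generators and then exploits the factorization of pseudo-Galois covers together with $\Frob$-locality and a trace argument.

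\textbf{Step 1 (reduction to generators).} Since $\LL i_B^*$ is triangulated and commutes with small sums (being a left Quillen functor whose derived functor preserves homotopy colimits), and since the full subcategory of $\fh$-local objects in $\catD_{\Frobet,\B^1}(\RigNor/B)$ is triangulated and closed under small sums (this follows immediately from Remark~\ref{pseudoG} and Proposition~\ref{compcor}), it suffices to verify $\fh$-locality on a set of compact generators of $\RigDA^{\eff}_{\Frobet}(B^{\Perf})$. By Lemma~\ref{DA=limDA}, such a set is given by the motives $\Lambda(Z,-n)$ with $Z\in\RigSm/B^{(-n)}$.

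\textbf{Step 2 (factorization of pseudo-Galois covers).} Given a pseudo-Galois cover $Y\to X$ in $\RigNor/B$ with group $G=\Aut(Y/X)$, the function-field extension $K(X)\subset K(Y)$ decomposes as a Galois subextension $K(X)\subset L$ of group $G$ followed by a purely inseparable extension $L\subset K(Y)$. Taking normalizations produces a factorization $Y\to W\to X$ in which $W\to X$ is a Galois cover of normal varieties (possibly ramified) with group $G$, and $Y\to W$ is a $\Frob$-cover. Since $\LL i_B^* M$ is in particular a $\Frob$-local object (being $\Frobet$-local), the morphism $\LL i_B^* M(W)\to \LL i_B^* M(Y)$ induced by $Y\to W$ is a quasi-isomorphism, equivariantly with respect to the natural $G$-action (trivial on the fibers of $Y\to W$). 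This reduces the pseudo-Galois descent for $Y\to X$ to Galois descent for $W\to X$.

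\textbf{Step 3 (Galois descent for $W\to X$ via a trace).} For the ramified Galois cover $W\to X$, the rationality hypothesis on $\Lambda$ is used. The averaging operator $\tfrac{1}{|G|}\sum_{g\in G} g^{*}$ acts on $\LL i_B^* M(W)$ and, being a $\Q$-linear idempotent, realizes $\LL i_B^* M(W)^G$ as a canonical direct summand of $\LL i_B^* M(W)$. It remains to identify this summand with $\LL i_B^* M(X)$. For this, one works with the explicit model of $\LL i_B^*\Lambda(Z,-n)$ as the $(\Frobet,\B^1)$-local replacement of the representable presheaf $\Lambda(Z)$ on $\RigNor/B$; since $\Q$-linearity makes the functor of $G$-invariants exact and compatible with filtered colimits and with the $\et$- and $\B^1$-local replacements used in Proposition~\ref{locsets} (cf.\ the argument of Proposition~\ref{fhet}), one derives $\LL i_B^* M(X)\cong \LL i_B^* M(W)^G$ from the set-theoretic identity $\mcO_W^{G}=\mcO_X$ at the presheaf level.

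\textbf{Main obstacle.} The delicate point is Step 3: transporting the set-level Galois quotient identity to the derived category after performing the $(\Frobet,\B^1)$-localization. Concretely, one must check that the $G$-action commutes with the étale-hypercover description of $\LL i_B^* \Lambda(Z,-n)(X)$ used in Proposition~\ref{fhet} and with the Suslin--Voevodsky-style construction of the $\B^1$-local replacement; here the $\Q$-linearity is essential, since it makes $G$-invariants exact and allows the trace to split the relevant complexes. One should expect the argument to mirror the interaction between $a_{\et}$ and $G$-invariants appearing in the proof of Proposition~\ref{fhet}, combined with the Frob-invariance supplied by Corollary~\ref{perfsm}.
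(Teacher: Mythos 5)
Your Step 2 factorization $Y\to W\to X$ (Galois-then-$\Frob$) is legitimate and the ensuing reduction to the ramified Galois cover $W\to X$ via $\Frob$-locality is correct. The genuine gap is in Step~3: you cannot extract Galois descent for $W\to X$ from the set-theoretic identity $\mcO(W)^G=\mcO(X)$, because that identity underlies the statement $\Hom(W,Z)^G=\Hom(X,Z)$, whereas the $\fh$-descent condition of Remark~\ref{pseudoG} requires instead
\[
\Lambda(Z)(X)\;\stackrel{\sim}{\longrightarrow}\;\bigl(\Lambda\Hom(W,Z)\bigr)^G .
\]
These two do not coincide: for a $\Q$-algebra $\Lambda$ and a finite $G$-set $S$, the module $(\Lambda S)^G$ has a basis indexed by all $G$-orbits of $S$ (orbit averages), while $\Lambda(S^G)$ sits inside it as the span of the singleton orbits only. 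For a ramified Galois cover the action of $G$ on $\Hom(W,Z)$ has plenty of non-fixed points, so $\Lambda(Z)$ is \emph{not} $\fh$-local as a presheaf, and the asserted ``presheaf-level identity'' you want to transport through the $(\Frobet,\B^1)$-localization simply does not hold. The localization must therefore do genuine work, and your appeal to exactness of $G$-invariants and compatibility with filtered colimits does not supply the missing input.

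This is exactly where the paper's proof diverges from yours and where the extra machinery is indispensable. The paper never attempts Galois descent for a globally ramified cover. Instead, after translating via Lemma~\ref{B.5} into a statement internal to $\RigDA^{\eff}_{\Frobet}$ (namely that $\id\to(\R r_*\LL r^*)^G$ is invertible), it invokes Lemma~\ref{strat} to stratify $X$ into locally closed normal pieces $X_i$ over which $X'\to X$ becomes a composition of an \'etale cover and a $\Frob$-cover, and uses the locality axiom (Theorem~\ref{locality}) together with the base-change compatibility of Lemma~\ref{B.7} to reduce to each stratum. There the $\Frob$-part is an equivalence of motivic categories (Proposition~\ref{quillenpairs}), and the \'etale part is handled by the split-after-base-change trick (Lemma~\ref{conservative} plus the observation that $r_i\times_{X_i}r_i$ is a trivial $G$-cover with $G$ acting transitively on the fibers). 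That split-\'etale computation is the actual ``engine'' making $(\R r_*\LL r^*)^G$ the identity, and it has no substitute in your argument. To repair your proof you would need to import some form of the stratification-plus-locality step; without it, Step~3 is a genuinely unproved assertion at the ramification locus of $W\to X$.
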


\begin{proof}
	Let $M$ be an object of $\RigDA^{\eff}_{\Frobet}(B^{\Perf} )$ let $f\colon X\ra B$ be a normal irreducible variety over $B$ and let $r\colon X'\ra X$ be a pseudo-Galois covering in $\Aff\!\Nor/B$ with $G=\Aut(X'/X)$.  We are left to prove that 
	\[
	{\catD_{\Frobet,\B^1}(\RigNor/B )}(\Lambda(X),\LL i^*M)\ra{\catD_{\Frobet,\B^1}(\RigNor/B )}(\Lambda(X'),\LL i^*M)^G
	\]
	is an isomorphism. Using Lemma \ref{B.5} we can equally prove that
	\[
	{\RigDA^{\eff}_{\Frobet}(X^{\Perf} )}(\Lambda,\LL f^*M)\ra{\RigDA^{\eff}_{\Frobet}(X'^{\Perf} )}(\Lambda,\LL r^*\LL f^*M)^G
	\]
	is an isomorphism. Using the notation of Lemma \ref{B.7}, it suffices to prove that the natural transformation $\id\ra(\R r_*\LL r^*)^G$ is invertible.
	
	Using Lemma \ref{strat}, we can define a stratification $(X_i)_{0\leq i\leq n}$ of $X$ made of locally closed connected normal subvarieties of $X$ such that $r_i\colon X'_i\ra X_i$ is a composition of an \'etale cover and a $\Frob$-cover 
	of normal varieties,
	by letting $X'_i$ be 
	the reduction of
	the subvariety $X_i\times_XX'\subset X'$. Using the locality axiom (Theorem \ref{locality}) for $\RigDA^{\eff}_{\Frobet}$ applied to the inclusions $u_i\colon X_i\ra X$ we can then restrict to proving that each transformation $\LL u_i^*\ra\LL u_i^*(\R r_*\LL r^*)^G\cong (\R r_{i*}\LL r_i^*)^G\LL u_i^*$ is invertible, where the last isomorphism follows from Lemma \ref{B.7}. It suffices then to prove that $\id\ra(\R r_{i*}\LL r_i^*)^G$ is invertible. 
	If $s\colon Z\ra T$ is a $\Frob$-cover, the functors $(\LL s^*,\RR s_*)$ define an equivalence of categories  $\RigDA^{\eff}_{\Frobet}(T^{\Perf} )\cong \RigDA^{\eff}_{\Frobet}(Z^{\Perf} )$ by Proposition \ref{quillenpairs} hence we can assume that the maps $r_i$ are \'etale covers. 
	Moreover, since $\LL r_i^*\colon\RigDA^{\eff}_{\Frobet}(X_i^{\Perf} )\ra\RigDA^{\eff}_{\Frobet}(X_i'^{\Perf} )$ is conservative by Lemma \ref{conservative}, we can equivalently prove that $\LL r_i^*\ra\LL r_i^*(\R r_{i*}\LL r_i^*)^G\cong (\R r'_{i*}\LL r_i'^*)^G\LL r_i^*$ is invertible, where $r_i'$ is the   base change of $r_i$ over itself (see Lemma \ref{B.7}). By the assumptions on $r_i$ we conclude that $r_i'$ is a projection $\bigsqcup X'_i\ra X'_i$ with $G$ acting transitively on the fibers, so that the functor $(\R r'_{i*}\LL r_i'^*)^G$ is the identity, proving the claim.
\end{proof}

The following lemmas were used in the proof of the previous proposition.

\begin{lemma}\label{B.5}
	Let $f\colon B'\ra B$ be a map of normal rigid varieties over $K$. For any $M\in\RigDA_{\Frobet}(B )$ there is a canonical isomorphism
	\[
	{\catD_{\Frobet,\B^1}(\RigNor/B)}(\Lambda(B'),\LL i^*_BM)\cong{\RigDA_{\Frobet}(B')}(\Lambda,\LL f^*M).
	\]
\end{lemma}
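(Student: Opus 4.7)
The plan is to transform the left-hand side into the right-hand side by successive applications of derived adjunctions and a base-change isomorphism.

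First, observe that in $\RigNor/B$ the object $B' \to B$ is the image under $f_\sharp$ (post-composition with $f$) of the terminal object $\id_{B'}$ of $\RigNor/B'$; hence the representable $\Lambda(B')$ equals $\LL f_\sharp(\Lambda)$ in $\catD_{\Frobet,\B^1}(\RigNor/B)$. The Quillen adjunction $\LL f_\sharp \dashv \R f^*$ of Propositions \ref{quillenpairsS} and \ref{quillenpairs} then yields
$$\Hom_{\catD_{\Frobet,\B^1}(\RigNor/B)}(\Lambda(B'), \LL i_B^* M) \;\cong\; \Hom_{\catD_{\Frobet,\B^1}(\RigNor/B')}(\Lambda, \R f^* \LL i_B^* M).$$

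Second, and most delicately, I would establish the base-change isomorphism $\R f^* \LL i_B^* M \simeq \LL i_{B'}^* \LL f^* M$. On compact generators $\Lambda(X,-n)$ of $\RigDA^{\eff}_{\Frobet}(B^{\Perf})$ with $X \in \RigSm/B^{(-n)}$, and using the identification $B^{(-n)} \times_B B' \cong B'^{(-n)}$ (valid both in characteristic $0$ and in characteristic $p$, from the very definition of the Frobenius twist), the two sides compute the representable sheaf of $X \times_{B^{(-n)}} B'^{(-n)} \cong X \times_B B'$, a smooth — hence normal — variety over $B'^{(-n)}$, inside $\catD_{\Frobet,\B^1}(\RigNor/B')$. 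Since every functor in sight is triangulated and commutes with arbitrary sums, the comparison propagates from generators to arbitrary $M$. Verifying the iso at the derived level demands that fibrant replacements on the $\RigNor$ side restrict compatibly along $f_\sharp$; this compatibility is essentially furnished by Propositions \ref{quillenpairsS}, \ref{Ri*} and the preservation arguments in the proof of Proposition \ref{B.2}.

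Third, I would conclude via the full faithfulness of $\LL i_{B'}^*$ (Proposition \ref{B.2}). The unit $\Lambda \in \catD_{\Frobet,\B^1}(\RigNor/B')$ is the image under $\LL i_{B'}^*$ of the unit $\Lambda \in \RigDA^{\eff}_{\Frobet}(B'^{\Perf})$, as both represent $\id_{B'}$, which lies in $\cat_{B'}$; therefore
$$\Hom_{\catD_{\Frobet,\B^1}(\RigNor/B')}(\Lambda, \LL i_{B'}^* \LL f^* M) \;\cong\; \Hom_{\RigDA_{\Frobet}(B'^{\Perf})}(\Lambda, \LL f^* M).$$
Composing the three isomorphisms produces the stated identification. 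The base-change step is the main obstacle and carries all of the substantive content of the lemma — everything else is formal adjunction bookkeeping.
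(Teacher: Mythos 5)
Your three-step decomposition — adjunction $\LL f_\sharp \dashv \RR f^*$, base-change $\RR f^* \LL i_B^* \simeq \LL i_{B'}^* \LL f^*$, then full faithfulness of $\LL i_{B'}^*$ — is exactly the skeleton of the paper's proof, and steps 1 and 3 match it line for line. Where you diverge is in how step 2 is to be established, and this is where your proposal remains a sketch rather than a proof. The paper avoids "check on compact generators" altogether: it writes down the square of underived pullback functors on presheaves over $\cat_B[\Phi^{-1}]$ and $\RigNor/B[\Phi^{-1}]$, shows that both composites $i_{B'}^* f^*$ and $f^* i_B^*$ are given by the same colimit formula (using the identification $V\times_{B^{(-n)}}B'^{(-n)}\cong(V\times_BB')_{\red}$ in $\RigSm/B'[\Phi^{-1}]$), and then passes to derived categories by invoking Lemma \ref{124}, which shows $f^*$ on $\Ch\Psh(\RigNor/B)$ preserves $(\Frobet,\B^1)$-equivalences so that $\RR f^* = f^*$. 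Your plan of verifying the isomorphism on generators does not immediately work as stated: you never exhibit the natural transformation that would be tested, and the claim that "every functor in sight commutes with arbitrary sums" for $\RR f^*$ is precisely what requires Lemma \ref{124} (or your own version of it) — without knowing $\RR f^* = f^*$ one has no reason for a right derived restriction to commute with sums. Also, your citation of Proposition \ref{Ri*} is misplaced: that result concerns $e_*$ for a \emph{finite} morphism $e$, whereas $f$ here is an arbitrary map of normal varieties; the relevant preservation statement is the one in Lemma \ref{124} (for $f^*$ on the $\RigNor$ side), which your plan omits. Your identifications $B^{(-n)}\times_B B'\cong B'^{(-n)}$ and $X\times_{B^{(-n)}}B'^{(-n)}\cong X\times_B B'$ are correct, so the content you want out of step 2 is right; what is missing is the construction showing the square commutes at the presheaf level and the proof that $f^*$ preserves the localizing weak equivalences, both of which carry the real weight.
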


\begin{proof}
	Consider  the following diagram of functors:
	$$\xymatrix{
		\Psh(\cat_B[\Phi^{-1}])\ar[r]^-{i_B^*}\ar[d]^{f^*} & \Psh(\RigNor/B[\Phi^{-1}])\ar[d]^{f^{*}}\\
		\Psh(\cat_{B'}[\Phi^{-1}])\ar[r]^-{i_{B'}^*}&\Psh(\RigNor/B'[\Phi^{-1}])
	}$$
	Let $\mcF$ be in $\Psh(\cat_B[\Phi^{-1}])$ and $X'$ be in $\RigNor/B'$.  One has  $(i_{B'}^*f^*) (\mcF)(X')=\colim \mcF(V)$ where the colimit is taken over the  maps $X'\ra V\times_{B^{(-n)}}B'^{(-n)}$ in $\RigNor/B'[\Phi^{-1}]$ by letting $V$ vary among varieties which are smooth over some $B^{(-n)}$. On the other hand,  one has  $(f^*i_B^*)(\mcF)(X')=\colim\mcF(V)$ where the colimit is taken over the  maps $X'\ra V$ in $\RigNor/B[\Phi^{-1}]$ by letting $V$ vary among varieties which are smooth over some $B^{(-n)}$. Since 
	$V\times_{B^{(-n)}}B'^{(-n)}\cong(V\times_BB')_{\red}$ in $\RigSm/B'[\Phi^{-1}]$ we deduce that the indexing categories are equivalent, hence  
	the diagram above is commutative and therefore by Corollary \ref{perfsm} and what shown in the proof of Proposition \ref{quillenpairsS} also the following one is: 
	$$\xymatrix{
		\Ch\Sh_{\Frobet}(\RigSm/B^{\Perf})\ar[r]^-{i_B^*}\ar[d]^{f^*} & \Ch\Sh_{\Frobet}(\RigNor/B)\ar[d]^{f^{*}}\\
		\Ch\Sh_{\Frobet}(\RigSm/B'^{\Perf})\ar[r]^-{i_{B'}^*}&\Ch\Sh_{\Frobet}(\RigNor/B')
	}$$
	This fact  together with Lemma \ref{124} implies  $ f^{*}\LL i^*_B\cong\LL i^*_{B'} \LL f^{*}$. By Propositions \ref{quillenpairs} and \ref{B.2} we then deduce
	\[
	\begin{aligned}
	&{\catD_{\Frobet,\B^1}(\RigNor/B)}(\Lambda(B'),\LL i^*_BM)= {\catD_{\Frobet,\B^1}(\RigNor/B)}(\LL f_\sharp(\Lambda),\LL i^*_BM)\cong \\ &\cong  
	{\catD_{\Frobet,\B^1}(\RigNor/B')}(\Lambda, f^{*}\LL i^*_BM)\cong  {\catD_{\Frobet,\B^1}(\RigNor/B')}(\Lambda,\LL i^*_{{B'}}\LL f^{*}M)\cong \\
	&\cong{\catD_{\Frobet,\B^1}(\RigNor/B')}(\LL i^*_{B'}\Lambda,\LL i^*_{{B'}}\LL f^{*}M)\cong {\RigDA_{\Frobet}(B')}(\Lambda,\LL f^{*}M)
	\end{aligned}
	\]
	as claimed.
\end{proof}

\begin{lemma}\label{124}
	Let $f\colon B'\ra B$ be a map of normal varieties over $K$. The functor \[f^*\colon\Ch\Psh(\RigNor/B)\ra\Ch\Psh(\RigNor/B')\] preserves the $(\Frobet,\B^1)$-equivalences.
\end{lemma}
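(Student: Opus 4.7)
The plan is to show that $f^*$ preserves $\Frobet$- and $\B^1$-equivalences separately. The key observation is that, as a precomposition functor along $\RigNor/B'\to\RigNor/B$ sending $(X'\to B')$ to $(X'\to B'\to B)$, $f^*$ is computed levelwise by evaluation: $(f^*\mcF)(X')=\mcF(X')$ with $X'$ viewed in $\RigNor/B$ through the composite structure map. It is therefore exact at the presheaf level and commutes with the homology functors $H_i(-)$, so it suffices to show that $f^*$ commutes with sheafification for the relevant topologies.

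For the $\Frobet$-part, I would reduce via Proposition \ref{afrobet} to checking the commutation of $f^*$ with $a_{\Frob}$ and $a_{\et}$ separately. For $a_{\Frob}$, the formula $a_{\Frob}\mcG(Y)=\varinjlim_n\mcG(Y^{(-n)})$ established in the proof of Proposition \ref{afrobet} gives immediately
\[
f^*(a_{\Frob}\mcG)(X')=\varinjlim_n\mcG(X'^{(-n)})=a_{\Frob}(f^*\mcG)(X'),
\]
since $X'^{(-n)}$ is viewed as an object of $\RigNor/B$ through $X'^{(-n)}\to X'\to B'\to B$. For $a_{\et}$, the category of \'etale hypercovers of $X'$ in $\RigNor/B$ coincides with that in $\RigNor/B'$: any simplicial normal variety $\mcU_\bullet$ \'etale-augmented over $X'$ has a canonical structure map to $B'$ via $\mcU_i\to X'\to B'$ whose composition with $f$ recovers the structure map to $B$. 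Hence the colimit formulas computing $a_{\et}(f^*\mcG)(X')$ in $\RigNor/B'$ and $f^*(a_{\et}\mcG)(X')$ in $\RigNor/B$ match term by term, yielding the desired commutation.

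For the $\B^1$-part, by \cite[Proposition 4.2.74]{ayoub-th2} it is enough to verify that $f^*(\Lambda(\B^1_X)\to\Lambda(X))$ is a $\B^1$-weak equivalence in $\Ch\Psh(\RigNor/B')$ for every $X\in\RigNor/B$. Although $f^*\Lambda(X)$ need not be representable by a single object of $\RigNor/B'$, the interval structure on $\B^1$ provides an explicit homotopy on $f^*\Lambda(\B^1_X)$ between the identity and the composition of the projection with the zero section, induced by multiplication by the $\B^1$-coordinate on the second factor of $\B^1_X$; this is precisely the mechanism employed in \cite[Theorem 2.5.24]{ayoub-rig} and reused in the proof of Proposition \ref{Ri*} above.

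The main subtle point is the commutation with $a_{\et}$, where one must carefully identify the two indexing categories of \'etale hypercovers despite the different ambient bases; the $\Frob$-commutation is immediate from the explicit colimit formula, and the $\B^1$-homotopy argument transfers without difficulty from representables to the presheaf $f^*\Lambda(\B^1_X)$ because $f^*$ is exact and compatible with the interval action.
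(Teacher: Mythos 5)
Your proof is correct and follows essentially the same route as the paper's: $f^*$ is precomposition along $\RigNor/B'\to\RigNor/B$, hence exact and colimit-preserving; it commutes with $\Frobet$-sheafification because the $\Frob$- and \'etale coverings of an object $X'$ are the same in $\RigNor/B'$ and $\RigNor/B$ (the paper asserts this in one line where you unpack it via $a_{\Frob}$ and $a_{\et}$, which is a harmless elaboration); and for the $\B^1$-part one invokes \cite[Proposition 4.2.74]{ayoub-th2} and checks on generators $\Lambda(\B^1_X)\to\Lambda(X)$. The only cosmetic difference is that the paper exhibits the $\B^1$-equivalence through the isomorphism $f^*\Lambda(\B^1_X)\cong\Lambda(\B^1_{B'})\otimes f^*\Lambda(X)$, whereas you phrase it as an explicit multiplicative homotopy on $f^*\Lambda(\B^1_X)$; these are two descriptions of the same contraction.
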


\begin{proof}
	Since $f^*$ commutes with $\Frobet$-sheafification and with colimits, it preserves $\Frobet$-equivalences. Since $f^*(\Lambda(\B^1_V))\cong  \B^1_B\otimes f^*(\Lambda(V))$ is weakly equivalent to $f^*(\Lambda(V))$ for every $V$ in $\RigNor/B$ we also conclude  that $f^*$ preserves $\B^1$-weak equivalences, hence the claim.
\end{proof}

\begin{lemma}\label{conservative}
	Let $B$ be a  normal variety over $K$ and let $f\colon X\ra Y$ be a composition of  $\Frob$-coverings and $\et$-coverings in $\RigNor/B$. The functor $\LL f^*\colon\RigDA^{\eff}_{\Frobet}(Y^{\Perf} )\ra\RigDA^{\eff}_{\Frobet}(X^{\Perf} )$ is conservative.
\end{lemma}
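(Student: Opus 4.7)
The plan is to reduce to the case of a single cover and then handle the two types separately, relying on étale hyperdescent in the $\Frobet$-topology.

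First I would observe that the composition of conservative functors is conservative, so using the factorization of $f$ it suffices to assume that $f$ is either a single $\Frob$-cover or a single étale cover. The $\Frob$-cover case is immediate: by Proposition \ref{quillenpairs}, if $f$ is a $\Frob$-cover then $(\LL f^*, \RR f_*)$ is an equivalence of categories, which is in particular conservative.

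The heart of the argument is therefore the étale case. Assume $f\colon X \to Y$ is an étale cover of normal varieties and let $M \in \RigDA^{\eff}_{\Frobet}(Y^{\Perf})$ with $\LL f^*M \cong 0$. Represent $M$ by a $(\Frobet,\B^1)$-fibrant complex of presheaves on $\RigSm/Y^{\Perf}$. For each $U \in \RigSm/Y^{\Perf}$ form the Čech nerve $X_\bullet(U)$ of the étale cover $U \times_Y X \to U$, i.e.\ $X_n(U) = U \times_Y X \times_Y \cdots \times_Y X$ with $n+1$ factors of $X$. Because $f$ is étale, each $X_n(U)$ is étale over $X$ and in particular lies in $\RigSm/X^{\Perf}$, so
\[
M(X_n(U)) \;\simeq\; (\LL f^*M)(X_n(U)) \;\simeq\; 0.
\]
Since the $\Frobet$-topology is generated by $\Frob$-covers and the étale topology, a $(\Frobet,\B^1)$-fibrant object is in particular fibrant for étale hyperdescent, and hence
\[
M(U) \;\simeq\; \holim_{[n] \in \Delta}\, M(X_n(U)) \;\simeq\; 0.
\]
As $U$ was arbitrary, $M \simeq 0$, which proves that $\LL f^*$ is conservative.

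The main technical point to nail down is the étale hyperdescent formula $M(U) \simeq \holim M(X_\bullet(U))$ for a $(\Frobet,\B^1)$-fibrant $M$. This should be read off directly from the construction of $\Ch_{\Frobet,\B^1}\Psh(\RigSm/Y^{\Perf})$ as a left Bousfield localization (Proposition \ref{locsets}): the étale topology is coarser than $\Frobet$, so $\Frobet$-local objects are in particular $\et$-local, and for a complex of presheaves with rational coefficients the $\et$-local condition translates, via Verdier's hypercovering theorem together with the bounded $\et$-cohomological dimension guaranteed by $\Lambda$ being a $\Q$-algebra (as already used in the proof of Lemma \ref{DA=limDA}, Step 5), into étale hyperdescent. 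Once that is in place, the argument above concludes the proof.
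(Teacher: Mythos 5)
Your argument is correct and follows essentially the same route as the paper: after reducing to a single cover (Proposition \ref{quillenpairs} handles the $\Frob$ case as an equivalence), the \'etale case is proved by \'etale (hyper)descent via the \v{C}ech nerve, which is exactly the content of the reference \cite[Lemma 3.4]{ayoub-etale} that the paper cites at this point; you simply spell out the argument rather than outsourcing it. One small slip: $X_n(U)=U\times_Y X^{\times_Y(n+1)}$ is smooth over $X$ but not in general \'etale over $X$ (this would require $U\to Y$ to be \'etale, not merely smooth); fortunately smoothness is all you need to place $X_n(U)$ in $\RigSm/X^{\Perf}$ and to identify $M(X_n(U))$ with $(\LL f^*M)(X_n(U))$ via the adjunction $f_\sharp\dashv f^*$, so the proof goes through unchanged.
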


\begin{proof}
	If $f$ is a $\Frob$-cover, then  $\LL f^*$ is an equivalence by Proposition \ref{quillenpairs}.  We are left to prove the claim in case $f$ is an $\et$-covering. In this case, we can use the proof of the analogous statement in algebraic geometry \cite[Lemma 3.4]{ayoub-etale}.
\end{proof}

\begin{lemma}\label{B.7}
	Let $e\colon X'\ra X$ be a finite  morphism of normal  varieties over $K$ and let  $G$ be a finite  group acting on $\R e_*\LL e^*$. There exists a subfunctor $(\R e_*\LL e^*)^G$ of $\R e_*\LL e^*$ such that for all $M$, $N$ in $\RigDA^{\eff}_{\Frobet}(X^{\Perf} )$  one has  
	\[
	{\RigDA^{\eff}_{\Frobet}(X^{\Perf} )}(M,(\R e_*\LL e^*)^G N)\cong{\RigDA^{\eff}_{\Frobet}(X^{\Perf} )}(M,\R e_*\LL e^*N)^G.
	\]
	Moreover for any map $f\colon Y\ra X$ of normal rigid varieties 
	factoring into a closed embedding followed by a smooth map, and any diagram of normal varieties
	$$\xymatrix{
		(Y\times_XX')_{\red}\ar[r]^-{f'}\ar[d]^{e'}&X'\ar[d]^{e}\\
		Y\ar[r]^{f}&X
	}$$
	there is an induced action of $G$ on $\R e'_*\LL e'^*$ and an invertible transformation  $\LL f^*(\R e_*\LL e^*)^G\stackrel{\sim}{\ra}(\R e'_*\LL e'^*)^G\LL f^*$.
\end{lemma}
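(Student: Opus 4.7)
For the first assertion, I would exploit that $\Lambda$ is a $\Q$-algebra and $G$ is finite: the averaging operator $p_G:=\frac{1}{|G|}\sum_{g\in G}g$ is a well-defined idempotent natural endomorphism of $\R e_*\LL e^*$. Since $\RigDA^{\eff}_{\Frobet}(X^{\Perf})$ is a triangulated category with arbitrary sums (hence idempotent complete), $p_G$ splits on every object $N$, and one defines $(\R e_*\LL e^*)^G N:=\im(p_G(N))$ as the direct summand it cuts out. The displayed $\Hom$-identity is then formal: a morphism $\phi\in\Hom(M,\R e_*\LL e^*N)$ is $G$-invariant if and only if $p_G(N)\circ\phi=\phi$, if and only if $\phi$ factors through $\im(p_G(N))=(\R e_*\LL e^*)^G N$.

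For the base change assertion, it suffices to establish a $G$-equivariant isomorphism
\[
\LL f^*\R e_*\LL e^*\stackrel{\sim}{\ra}\R e'_*\LL e'^*\LL f^*,
\]
and then to apply $(-)^G$, which commutes with $\LL f^*$ because the latter is triangulated and $\Q$-linear, hence respects the idempotent $p_G$. Factoring $f=s\circ i$ with $i\colon Y\hookrightarrow W$ a closed immersion and $s\colon W\ra X$ smooth, it suffices to treat each piece separately, since the base change isomorphism composes. In the smooth case, $W\times_X X'$ is smooth over $X'$, hence already reduced, so the diagram is an honest cartesian square, and the smooth base change $\LL s^*\R e_*\cong\R e'_*\LL s'^*$ reduces (using $\R e_*=e_*$ from Proposition \ref{Ri*}) to a direct verification on representable presheaves together with the Quillen adjunctions of Proposition \ref{quillenpairs}.

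In the closed-immersion case, the scheme-theoretic pullback $Y\times_W X'$ may carry nilpotents, but the canonical map $(Y\times_W X')_{\red}\ra Y\times_W X'$ is a universal homeomorphism which, upon perfection, becomes invertible in $\RigDA^{\eff}_{\Frobet}$ by Proposition \ref{frobuniv} combined with the $\Frob$-cover invariance of Proposition \ref{quillenpairs}. Combining this invertibility with the locality axiom (Theorem \ref{locality}) applied to $i$ and its reduced base change $i'$, which expresses $\LL i^*$ and $\LL i'^*$ through distinguished triangles whose remaining terms come from the smooth open complements (already handled), yields the desired isomorphism. I expect this last step to be the main obstacle: one must patch together the normality hypothesis on the entire diagram, the inversion of radicial universal homeomorphisms in the $\Frobet$-localized setting, and the locality triangle, in order to bridge the gap between the scheme-theoretic fiber product and its reduction $(Y\times_W X')_{\red}$.
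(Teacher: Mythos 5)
Your first assertion — defining $(\R e_*\LL e^*)^G$ as the direct summand cut out by the averaging idempotent $p_G=\tfrac{1}{|G|}\sum g$, and deducing the $\Hom$-identity formally — is exactly the paper's argument. Likewise, your reduction of the second assertion to a ($G$-equivariant) base change isomorphism $\LL f^*\R e_*\LL e^*\cong\R e'_*\LL e'^*\LL f^*$, and the factorization into a smooth leg plus a closed-immersion leg, matches the paper; and your treatment of the smooth leg (passing to the left adjoint $f_\sharp$ and checking on representable presheaves) is also the paper's Step 1.

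The gap is in the closed-immersion leg. You propose to expand $\LL i^*$ and $\LL i'^*$ via the locality triangles and argue that the remaining terms ``come from the smooth open complements (already handled).'' But the open-complement terms in the locality triangles involve $j_\sharp j^*$ and $j'_\sharp j'^*$, so what one actually needs to compare is $\LL j_\sharp\,\R e''_*$ with $\R e_*\,\LL j'_\sharp$ (where $e''\colon U'=U\times_X X'\ra U$ is the restriction of $e$ over the open complement). This is \emph{not} the smooth/open base change $\LL j^*\R e_*\cong\R e''_*\LL j'^*$ proved in your smooth step: it is a commutation of $j_\sharp$ (a left adjoint) with $e_*$ (a right adjoint), and there is no adjunction that turns one into the other for free. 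Worse, if you try to establish $\LL j_\sharp\,\R e''_*\cong\R e_*\,\LL j'_\sharp$ by conservativity of $(\LL j^*,\LL i^*)$, the $\LL i^*$-component requires precisely the closed-immersion base change you are trying to prove — so the argument as sketched runs in a circle. The paper escapes this by proving the $j_\sharp$-commutation directly at the unlocalized level (its Steps 3--4): one verifies that $e_*$, $e''_*$, $j_\sharp$, $j'_\sharp$ all preserve $(\Frobet,\B^1)$-equivalences (using Proposition~\ref{Ri*} and the ``extension-by-zero'' description of $j_\sharp$), so they derive trivially, and the sheaf-level identity $j_\sharp e''_*\mcF\cong e_* j'_\sharp\mcF$ then holds ``from the very definitions.'' This is an additional ingredient your plan does not supply.

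A secondary remark: your worry about $(Y\times_W X')$ versus its reduction is legitimate, but appealing to Proposition~\ref{frobuniv} is not quite the right tool — that proposition concerns finite universal homeomorphisms between \emph{normal} varieties, whereas the non-reduced fiber product is not normal. The paper simply works with $(Y\times_X X')_{\red}$ from the outset and never compares it to the non-reduced fiber product; the point is that the locality-based reduction to the open complement only ever uses the reduced diagram, so no such comparison is needed.
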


\begin{proof}
	We define $(\R e_*\LL e^*)^G$ to be subfunctor obtained as the image of the projector $\frac{1}{|G|}\sum g$ acting on $\R e_*\LL e^*$.
	
	In order to prove the second claim, it suffices to prove that $\LL f^*\RR e_*\LL e^*\cong \RR e'_*\LL e'^*\LL f^*$. As the latter term coincides with $\RR e'_*\LL(fe')^*=\RR e'_*\LL(ef')^*=\RR e'_*\LL f'^*\LL e^*$ it suffices to show that the base change transformation $\LL f^*\R e_*\ra \R e'_*\LL f'^*$  is invertible. We can consider individually the case in which $f$ is smooth, and the case in which $f$ is a closed embedding. 
	
	{\it Step 1}: Suppose that $f$ is smooth. Then $f^*$ has a left adjoint $f_\sharp$. We can equally prove that the natural transformation $\LL f'_\sharp \LL e'^*\ra \LL e^* \LL f_\sharp$ is invertible. This follows from the isomorphism between the functors $f'_\sharp e'^*$ and $e^* f_\sharp$ from $\Psh(\RigSm/X'^{\Perf})$ to $\Psh(\RigSm/Y^{\Perf})$ obtained by direct inspection.
	
	{\it Step 2}: Suppose that $f$ is a closed immersion. Let $j\colon U\ra X$ be the open immersion complementary to $f$ and $j'$ be the open immersion complementary to $f'$. By the locality axiom (Theorem \ref{locality}) we can equally prove that $\LL j_\sharp \R e'_*\ra \R e_* \LL j'_\sharp$ is invertible. 
	
	{\it Step 3}: It is easy to prove that the transformation $\LL j_\sharp \R e'_*\ra \R e_* \LL j'_\sharp$  is invertible once we know that $e_*$, $e'_*$, $j_\sharp$ and $j'_\sharp$ preserve the $(\Frobet,\B^1)$-equivalences.  
	Indeed, if this is the case, the functors derive trivially and it suffices to prove that for any $\Frobet$-sheaf $\mcF$ the map $(j_\sharp e'_*)(\mcF)\ra (e_* j'_\sharp)(\mcF)$ is invertible. This follows from the very definitions.
	
	{\it Step 4}: The fact that $j_\sharp$ (and similarly $j'_\sharp$) preserves the $(\Frobet)$-weak equivalences follows from the fact that it respects quasi-isomorphisms of complexes of $\Frobet$-sheaves, since it is the  functor of extension by $0$.  In order to prove that it preserves the $\B^1$-equivalences, by \cite[Proposition 4.2.74]{ayoub-th2} we can prove that $j_\sharp(\Lambda(\B^1_V)\ra\Lambda( V))$ is a $\B^1$-weak equivalence for all $V$ in $\RigSm/U^{\Perf}$ and this is clear. 
	The fact that $e_*$ (and similarly $e'_*$) preserves the $(\Frobet,\B^1)$-equivalences is proved in Proposition \ref{Ri*}. We then conclude the claim in case $f$ is a closed immersion.
\end{proof}

\begin{lemma}\label{strat}
	Let $f\colon X'\ra X$ be a pseudo-Galois map of normal  varieties over $K$. There exists a finite stratification $(X_i)_{1\leq i\leq n}$ of locally closed normal subvarieties of $X$ such that each induced map $f_i\colon (X'\times_XX_i)_{\red}\ra X_i$ is a composition of an \'etale cover and a  $\Frob$-cover of normal rigid varieties.
\end{lemma}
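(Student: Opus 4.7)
I will prove Lemma~\ref{strat} by noetherian induction on $\dim X$, but working with the following mildly strengthened statement: for any finite surjective morphism $g\colon Y\ra X$ of rigid analytic varieties over $K$ with $X$ reduced, there exists a finite stratification of $X$ by locally closed normal subvarieties $X_i$ such that each induced map $(Y\times_X X_i)_{\red}\ra X_i$ is a composition of an \'etale cover and a $\Frob$-cover of normal varieties. Weakening the pseudo-Galois hypothesis to ``$X$ reduced'' is essential, because the inductive step restricts the given map to the closed complement of the first stratum, which is reduced but typically neither normal nor the target of a pseudo-Galois map.

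The heart of the proof is to construct a single dense open normal subvariety $X_1\subseteq X$ on which the desired factorization already exists. First, shrink $X$ to its normal locus (a dense open of $X$ since $X$ is reduced) and then to one of its irreducible components, so that we may assume $X$ is normal and integral. Next, shrink $X_1$ further to a dense open over which $Y$ is normal: the non-normal locus of the reduced variety $Y$ is closed and has dimension strictly less than $\dim Y=\dim X$ by generic normality of reduced varieties, so its image under the finite (hence closed) map $g$ is a proper closed subvariety of $X$. Over the resulting $X_1$, the fiber product $Y\times_X X_1$ is normal and therefore decomposes as a disjoint union $\bigsqcup_k Y^{(k)}$ of its finitely many connected normal integral components. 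For each $k$, let $L_k$ denote the separable closure of $K(X_1)$ in $K(Y^{(k)})$, and let $Y''^{(k)}$ be the normalization of $X_1$ in $L_k$. The map $Y^{(k)}\ra Y''^{(k)}$ is finite surjective between normal integral varieties with purely inseparable function field extension, hence a $\Frob$-cover: for any $b\in\mathcal O(Y^{(k)})$ one has $b^{p^n}\in K(X_1)\subseteq K(Y''^{(k)})$, and since $b^{p^n}$ is integral over the normal ring $\mathcal O(Y''^{(k)})$ and lies in its fraction field, it must belong to $\mathcal O(Y''^{(k)})$, giving the radicial property (and hence, via the remark following the definition of $\Frob$-cover, the universal homeomorphism property). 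The second map $Y''^{(k)}\ra X_1$ has separable function field extension, so is \'etale on a dense open $U_k\subseteq X_1$. Replacing $X_1$ by the finite intersection $\bigcap_k U_k$ yields the required factorization of $(Y\times_X X_1)_{\red}=\bigsqcup_k (Y^{(k)}\times_X X_1)\ra \bigsqcup_k (Y''^{(k)}\times_X X_1)\ra X_1$ as a $\Frob$-cover composed with an \'etale cover of normal varieties.

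For the inductive step, set $Z=(X\setminus X_1)_{\red}$, a proper closed reduced subvariety of $X$ of strictly smaller dimension. The morphism $(Y\times_X Z)_{\red}\ra Z$ is finite and surjective with $Z$ reduced, so the strengthened induction hypothesis supplies a stratification of $Z$ by locally closed normal subvarieties $X_2,\ldots,X_n$, which are automatically locally closed in $X$. Combining $X_1$ with $X_2,\ldots,X_n$ gives the required stratification of $X$; if connectedness of each stratum is required, as in the application in Proposition~\ref{arefhlocal}, further decompose each $X_i$ into its finitely many connected components.

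The main obstacle is to arrange simultaneously, on the stratum $X_1$, that the source $(Y\times_X X_1)_{\red}$, the intermediate $\bigsqcup_k Y''^{(k)}\times_X X_1$ and the target $X_1$ are all normal, so that the phrase ``composition of \'etale cover and $\Frob$-cover of normal varieties'' genuinely applies; this is orchestrated by the successive shrinkings of $X_1$, together with the observations that an open subvariety of a normal variety is normal and that the base change of an \'etale morphism along a normal variety is normal. A secondary technical point is the rigid analytic analog of the classical algebraic fact that a finite morphism of normal integral varieties with purely inseparable function field extension is a universal homeomorphism, which underlies the $\Frob$-cover verification for $Y^{(k)}\ra Y''^{(k)}$ above.
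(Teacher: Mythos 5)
Your proof is correct in outline and takes a genuinely different route from the paper's. The paper disposes of the lemma in two sentences by exhibiting a map of ringed spaces $\Spa R\to\Spec R$ that pulls back finite \'etale maps and open inclusions, and then invoking ``the analogous statement valid for schemes over $K$'' as known; that is, the work of producing the stratification is delegated to the algebraic case. You instead prove the statement directly in the rigid setting via noetherian induction on $\dim X$, after strengthening to arbitrary finite surjective $g\colon Y\to X$ with $X$ reduced. Your factorization is the standard one -- pass to the separable closure $L_k$ of $K(X_1)$ inside $K(Y^{(k)})$, normalize, and shrink to where the resulting map is \'etale -- and is in effect a self-contained proof of the scheme-level fact the paper cites, couched in rigid language and resting on the excellence of affinoid algebras (openness of the normal locus, finiteness of normalization, disjoint-union decomposition of normal affinoids). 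What your route buys is transparency and independence from the transfer-to-schemes device; what the paper's route buys is brevity by relying on a classical stratification theorem.

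Two small corrections. Where you write ``for any $b\in\mathcal O(Y^{(k)})$ one has $b^{p^n}\in K(X_1)\subseteq K(Y''^{(k)})$'', the power $b^{p^n}$ lands in $L_k=K(Y''^{(k)})$ (since $K(Y^{(k)})/L_k$ is purely inseparable), not in $K(X_1)$; the rest of the sentence --- integrality over the normal ring $\mathcal O(Y''^{(k)})$ together with membership in its fraction field $L_k$ --- is exactly right once the field is corrected. Second, after shrinking to the normal locus you say you pass to ``one of its irreducible components''; since the stratification must cover all of $X$, you should either treat all components of the normal locus in parallel (they are open, so this is harmless) or word it as taking $X_1$ to be a dense open of the full normal locus that works simultaneously for every component. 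Neither issue affects the validity of the argument.
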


\begin{proof}
	For every affinoid rigid variety $\Spa R $ there is a map of ringed spaces $\Spa R\ra\Spec R$ which is surjective on points, and such that the pullback of a finite \'etale map $\Spec S\ra\Spec R$ [resp. of an open inclusion $U\ra\Spec R$] over $\Spa R\ra\Spec R$ exists (following the notation of \cite[Lemma 3.8]{huber2}) and is finite \'etale [resp. an open inclusion]. The claim then follows from the analogous statement valid for schemes over $K$.
\end{proof}

\begin{rmk}
	In the  proof of Proposition \ref{arefhlocal}, we made use of the fact that $\Lambda$ is a $\Q$-algebra in a crucial way, for instance, in order to define the functor $(\R e_*\LL e^*)^G$.
\end{rmk}

The following result proves the second step.

\begin{cor}Let $B$ be a normal variety over $K$. 
	The composite functor \[ {\RigDA^{\eff}_{\Frobet}}(B^{\Perf} )\ra\catD_{\Frobet,\B^1}(\RigNor/B )\ra \catD^{\fh}_{\et,\B^1}(\RigNor/B )\] is fully faithful. 
\end{cor}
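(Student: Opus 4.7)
The plan is to assemble this corollary from three ingredients that are already in place: Proposition \ref{B.2}, Proposition \ref{arefhlocal}, and Proposition \ref{compcor}. By Proposition \ref{B.2} the first functor $\LL i_B^*$ is fully faithful. By Proposition \ref{compcor} the second functor can be interpreted as the reflection onto the full subcategory of $\fh$-local objects in $\catD_{\Frobet,\B^1}(\RigNor/B)$, i.e.\ as the left adjoint to the inclusion of a reflective subcategory. By Proposition \ref{arefhlocal} the image of $\LL i_B^*$ lies inside this reflective subcategory. The general fact to invoke is then that the reflection is naturally isomorphic to the identity on objects which are already local; hence the composite agrees, up to natural isomorphism, with $\LL i_B^*$ viewed as landing inside the $\fh$-local subcategory, and is therefore fully faithful.

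Concretely, I would first recall from Proposition \ref{compcor} the identification of $\catD^{\fh}_{\et,\B^1}(\RigNor/B)$ with the full subcategory of $\catD_{\Frobet,\B^1}(\RigNor/B)$ spanned by $\fh$-local objects, and note that under this identification the functor $\catD_{\Frobet,\B^1}(\RigNor/B)\ra \catD^{\fh}_{\et,\B^1}(\RigNor/B)$ is the left adjoint $L$ to the inclusion, with unit $\eta\colon \id\ra L$ an isomorphism on $\fh$-local objects. Then for $M, N\in\RigDA^{\eff}_{\Frobet}(B^{\Perf})$, Proposition \ref{arefhlocal} says $\LL i_B^* M$ and $\LL i_B^* N$ are both $\fh$-local, so
\[
\catD^{\fh}_{\et,\B^1}(\RigNor/B)(L\LL i_B^*M,L\LL i_B^*N)\cong \catD_{\Frobet,\B^1}(\RigNor/B)(\LL i_B^*M,\LL i_B^*N),
\]
using that $\eta_{\LL i_B^*N}$ is an isomorphism and full faithfulness of the inclusion. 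Composing with the isomorphism from Proposition \ref{B.2},
\[
\catD_{\Frobet,\B^1}(\RigNor/B)(\LL i_B^*M,\LL i_B^*N)\cong \RigDA^{\eff}_{\Frobet}(B^{\Perf})(M,N),
\]
the composite map on Hom-sets is an isomorphism, which is exactly the statement of the corollary.

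There is no real obstacle here; the substantive work has already been done in the two preceding propositions. The only thing to be mildly careful about is the model-categorical identification of the reflection functor $\catD_{\Frobet,\B^1}\ra\catD^{\fh}_{\et,\B^1}$ with the left derived functor of the $\fh$-sheafification, and the fact that on $\fh$-local objects the derived unit is an isomorphism; both are built into the Bousfield localization machinery used throughout Section \ref{frobmot}.
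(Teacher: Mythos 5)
Your proof is correct and takes essentially the same route as the paper. The paper compresses the argument to a single line — ``This follows at once from Proposition \ref{compcor} and Proposition \ref{arefhlocal}'' — while you have unpacked exactly the standard reflective-subcategory reasoning that the authors are implicitly invoking (together with Proposition \ref{B.2} for full faithfulness of the first functor): the unit of the Bousfield localization is an isomorphism on local objects, so the composite of a fully faithful functor landing in the local subcategory with the localization functor remains fully faithful.
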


\begin{proof}
	This follows at once from   Proposition \ref{compcor} and Proposition \ref{arefhlocal}.
\end{proof}

We now move to the third step. We recall that the category $\RigCor(B^{\Perf})$ is a subcategory of $\Sh_{\fh}(\RigNor/B)$. 
We denote by $j$ this inclusion of categories.

\begin{prop}
	Let $B$ be a  normal variety over $K$. The functor $j$ induces a fully faithful functor $\LL j^*\colon\RigDM^{\eff}(B^{\Perf} )\ra\catD^{\fh}_{\et,\B^1}(\RigNor/B )$.
\end{prop}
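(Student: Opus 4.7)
The inclusion $j\colon \RigCor(B^{\Perf}) \hookrightarrow \Sh_{\fh}(\RigNor/B)$ is a fully faithful $\Lambda$-linear functor by the very definition of $\RigCor$: the hom-sets on either side agree tautologically, and via Remark \ref{pseudoG} this is the only datum carried by the transfer structure on rational $\fh$-sheaves. Left Kan extension and restriction along $j$ produce an adjoint pair
\[
\adj{j^*}{\Ch\PST(\RigSm/B^{\Perf})}{\Ch\Psh(\RigNor/B)}{j_*},
\]
with $j^*(\Lambda_{\tr}(X)) = \Lambda(X)$ for $X \in \RigSm/B^{\Perf}$, and $j_*$ obtained by applying $\fh$-sheafification and then restricting along $j$. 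The ordinary unit $\id \to j_* j^*$ is an isomorphism on representables (full faithfulness of $j$) and hence everywhere by cocontinuity of $j^*$. The first routine task is to promote this to a Quillen adjunction from the $(\et,\B^1)$-structure on $\PST$ to the $(\fhet,\B^1)$-structure on $\Psh(\RigNor/B)$, which amounts to checking that $j_*$ sends $(\fhet,\B^1)$-local objects to $(\et,\B^1)$-local presheaves with transfers. The $\B^1$-part is immediate; the $\et$-part follows from the commutation of $a_{\et}$ with $\fh$-sheafification (Propositions \ref{afrobet}, \ref{fhet}), together with the canonicity of the transfer structure on a rational $\fh$-sheaf (Remark \ref{pseudoG}).

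Full faithfulness of $\LL j^*$ is then equivalent to invertibility of the derived unit $\Lambda_{\tr}(X) \to \RR j_* \LL j^*(\Lambda_{\tr}(X))$ in $\RigDM^{\eff}_{\et}(B^{\Perf})$ for every $X \in \RigSm/B^{\Perf}$; these objects form a set of compact generators (a with-transfers analogue of Lemma \ref{DA=limDA}), so this suffices. Concretely, I would produce, for every $Y \in \RigSm/B^{\Perf}$ and every $n \in \Z$, a natural isomorphism
\[
\RigDM^{\eff}_{\et}(B^{\Perf})(\Lambda_{\tr}(Y),\Lambda_{\tr}(X)[n]) \;\xrightarrow{\sim}\; \catD^{\fh}_{\et,\B^1}(\RigNor/B)(\Lambda(Y),\Lambda(X)[n]).
\]
Both sides compute a derived hom into a fibrant resolution of $\Lambda_{\tr}(X)$ (respectively $\Lambda(X)$), so the crux is to match these two resolutions after restriction along $j$.

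This identification is the main obstacle, and it is where the rational coefficient hypothesis plays its decisive role. The strategy mirrors that of Proposition \ref{arefhlocal}: given a pseudo-Galois cover $X' \to X$ with group $G$, one uses Lemma \ref{strat} to stratify the base into locally closed normal pieces on which the cover decomposes as a composition of an \'etale cover and a $\Frob$-cover; glues the resulting local comparisons via the locality axiom (Theorem \ref{locality}); and passes to $G$-invariants, an exact operation on $\Q[G]$-modules by Lemma \ref{oexact}. This shows simultaneously that (i) the image under $j_*$ of an $(\fhet,\B^1)$-fibrant complex is already $(\et,\B^1)$-fibrant as a presheaf with transfers, and (ii) the $j_*$-image of such a fibrant resolution of $\Lambda(X)$ is an $(\et,\B^1)$-fibrant resolution of $\Lambda_{\tr}(X)$. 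Together with the unit isomorphism on representables established in the first step, this forces the derived unit to be invertible, completing the proof.
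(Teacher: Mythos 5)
The proposal misses the central device of the paper's proof and in its place invokes machinery that does not apply. The paper's argument hinges on the factorization $o_{\tr}j_* = i_*o_{\fh}$, coming from the commutative square relating $a_{\tr}$ and $a_{\fh}$ over $i$ and $j$. This identity is what reduces everything to facts already on the table: right-Quillenness of $i_*$ and $o_{\fh}$, the preservation of $\Frobet$-equivalences by $i_*$ (Remark \ref{i*p}), and the fact that $o_{\tr}$ reflects (and, by Lemma \ref{2.111}, preserves) $(\et,\B^1)$-weak equivalences. Full faithfulness then follows immediately: $j_*j^*\cong\id$ at the unlocalized level, and since $j_*$ preserves weak equivalences one has $\R j_*=j_*$, so the derived unit is invertible. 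No stratification, no locality axiom, no reduction to compact generators is needed. You never mention this factorization, and your verification that $j_*$ preserves $(\et,\B^1)$-local objects is correspondingly vague --- invoking commutation of sheafification functors and ``canonicity of transfers'' does not establish the needed preservation of fibrant objects, which requires precisely the combination $o_{\tr}j_*=i_*o_{\fh}$ plus Lemma \ref{2.111}.

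More seriously, your final paragraph misdirects the substantial effort. You propose to mimic Proposition \ref{arefhlocal} (stratification by Lemma \ref{strat}, the locality axiom Theorem \ref{locality}, passage to $G$-invariants). But that machinery exists to prove that the image of $\LL i^*$ satisfies $\fh$-descent, which is a genuine issue on the \emph{without-transfers} side because presheaves on $\RigSm/B^{\Perf}$ have no built-in behavior under pseudo-Galois covers. On the \emph{with-transfers} side the situation is the reverse: objects of $\catD^{\fh}_{\et,\B^1}(\RigNor/B)$ are $\fh$-local by definition, and the morphisms in $\RigCor/B^{\Perf}$ are already computed in $\Sh_{\fh}(\RigNor/B)$, so there is literally nothing to check along these lines. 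The difficulty for $\LL j^*$ lies elsewhere --- in showing that restriction along $j$ preserves $\et$-locality and $(\Frobet,\B^1)$-equivalences --- and the stratification argument you gesture at would not address it even if it went through. As written, step (i) and (ii) at the end are assertions, not consequences of the strategy you describe.

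A minor further point: the claim that the underived unit is an isomorphism "by cocontinuity of $j^*$" after checking it on representables requires $j_*j^*$ to commute with colimits, which is not automatic since $j_*$ is a right adjoint; you should either argue this from the fact that the representable objects in $\PST$ and their images under $j$ detect and create the relevant colimits, or simply assert $j_*j^*\cong\id$ as a standard consequence of full faithfulness of $j$ in this enriched presheaf setting, as the paper does.
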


\begin{proof}
	The functor $j$ extends to a functor $\PST(\RigSm/B^{\Perf})\ra\Sh_{\fh}(\RigNor/B)$  and induces a Quillen pair $\adj{j^*}{\Ch\PST(\RigSm/B^{\Perf})}{\Ch\Sh_{\fh}(\RigNor/B)}{j_*}$ with respect to the projective model structures. We prove that it is a Quillen adjunction also with respect to the $(\et,\B^1)$-model structure on the two categories by showing that $j_*$ preserves  $(\et,\B^1)$-local objects. 
	
	From the following commutative diagram
	$$\xymatrix{
		\RigSm/B^{\Perf}\ar[r]\ar[d]	&	\Psh(\RigSm/B^{\Perf})\ar[d]^-{a_{\tr}}\ar[r]^-{i}	&	\Sh_{\Frob}(\RigNor/B)\ar[d]^-{a_{\fh}}\\ 
		\RigCor/B^{\Perf}\ar[r]	    	&	\PST(\RigSm/B^{\Perf})\ar[r]^{j}			&	\Sh_{\fh}(\RigNor/B)
	}$$
	we deduce that $o_{\tr}j_*=i_*o_{\fh}$ which is a right Quillen functor. It therefore suffices to show that if $o_{\tr}\mcF$ is  $(\et,\B^1)$-local then also $\mcF$ is, for every fibrant object $\mcF$. Let $\mcF\ra\mcF'$ be a $(\et,\B^1)$-weak equivalence to a $(\et,\B^1)$-fibrant object of $\Ch\PST(\RigSm/B^{\Perf})$. By Lemma \ref{2.111}, we deduce that $o_{\tr}\mcF\ra o_{\tr}\mcF'$ is a $(\et,\B^1)$-weak equivalence between $(\et,\B^1)$-fibrant objects, hence it is a quasi-isomorphism. As $o_{\tr}$ reflects quasi-isomorphisms, we conclude that $\mcF$ is quasi-isomorphic to $\mcF'$ hence $(\et,\B^1)$-local.
	
	We now prove that $\LL j^*$ is fully faithful by proving  that $\R j_*\LL j^*$ is isomorphic to the identity. As  $j_*j^*$ is isomorphic to the identity, it suffices to show that $\R j_*=j_*$. We start by proving that $j_*$ preserves $\Frobet$-weak equivalences. As shown in Remark \ref{i*p}, the functor $i_*$ preserves $\Frobet$-equivalences. It is also clear that $o_{\fh}$ does. Since $o_{\tr}$ reflects $\Frobet$-weak equivalences, the claim follows from the equality $o_{\tr}j_*=i_*o_{\fh}$. 
	Since  $j_*(\Lambda(\B^1_V))\cong  \Lambda(\B^1_B)\otimes j_*(\Lambda(V))$ is weakly equivalent to $j_*(\Lambda(V))$ for every $V$ in $\RigNor/B$, we also conclude that   $j_*$ preserves $\B^1$-weak equivalences, hence the claim.
\end{proof}

\begin{lemma}\label{2.111} 
	Let $B$ be a  normal variety over $K$. 
	The functor
	\[o_{\tr}\colon \Ch\PST(\RigSm/B^{\Perf})\ra\Ch\Psh(\RigSm/B^{\Perf})\]
	preserves   $(\et,\B^1)$-weak equivalences.
\end{lemma}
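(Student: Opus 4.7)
The plan is to reduce the claim to the fact that $o_{\tr}$ is exact, by showing that the $(\et,\B^1)$-fibrant replacement functor commutes with $o_{\tr}$ up to canonical isomorphism.

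First, three preliminary observations. (a) \emph{Exactness}: on underlying complexes of $\Lambda$-modules, $o_{\tr}$ is the identity and merely forgets the transfer functoriality, so it preserves quasi-isomorphisms and in particular projective weak equivalences. (b) \emph{Commutation with $a_{\et}$}: by the remark following Proposition \ref{rigdmperfet}, the $\et$-sheafification of a presheaf with transfers carries a unique transfer structure extending the original, so the underlying presheaf of $a_{\et}^{\PST}\mcF$ is canonically identified with $a_{\et}^{\Psh}(o_{\tr}\mcF)$; in particular, $o_{\tr}$ preserves $\et$-weak equivalences. (c) \emph{Commutation with the singular construction}: the functor $\Sing^{\B^1}$ associated to the cosimplicial object $\B^{\bullet}$ acts level-wise on the underlying complex of presheaves and manifestly commutes with forgetting transfers.

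Combining (b) and (c), the $(\et,\B^1)$-fibrant replacement functor $L$, constructed by a transfinite alternation of $\Sing^{\B^1}$ and $\et$-fibrant replacement, satisfies $o_{\tr}\circ L\cong L\circ o_{\tr}$. In particular, for any $\mcF$ in $\Ch\PST(\RigSm/B^{\Perf})$ the canonical map $o_{\tr}\mcF\to o_{\tr}L\mcF$ is an $(\et,\B^1)$-weak equivalence in $\Ch\Psh(\RigSm/B^{\Perf})$ with $(\et,\B^1)$-fibrant target.

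Now let $f\colon\mcF\to\mcG$ be an $(\et,\B^1)$-weak equivalence in $\Ch\PST(\RigSm/B^{\Perf})$. The induced map $Lf\colon L\mcF\to L\mcG$ is a weak equivalence between $(\et,\B^1)$-fibrant objects and is therefore a quasi-isomorphism. By (a), $o_{\tr}(Lf)$ remains a quasi-isomorphism, hence in particular an $(\et,\B^1)$-weak equivalence in $\Ch\Psh(\RigSm/B^{\Perf})$. Applying the two-out-of-three property to the commutative square with horizontal maps $o_{\tr}(f)$ and $o_{\tr}(Lf)$, and vertical maps the canonical ones $o_{\tr}\mcF\to o_{\tr}L\mcF$ and $o_{\tr}\mcG\to o_{\tr}L\mcG$ (both $(\et,\B^1)$-weak equivalences by the previous paragraph), yields that $o_{\tr}(f)$ is an $(\et,\B^1)$-weak equivalence. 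The main technical subtlety is point (b): identifying $\et$-sheafification in the two categories via $o_{\tr}$, which genuinely relies on the uniqueness of the transfer structure on the $\et$-sheafification of a presheaf with transfers.
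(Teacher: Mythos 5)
Your overall strategy — reduce to exactness of $o_{\tr}$ by exhibiting an $(\et,\B^1)$-localization functor that commutes with $o_{\tr}$ — is in the right spirit (and the paper's ``proof'' is only a pointer to \cite[Lemma~2.111]{ayoub-h1}, so it offers no explicit details to compare against). Observations (a) and (c) are fine, and (b) correctly identifies the essential input: the unique transfer structure on $a_{\et}\mcF$, which gives $o_{\tr}\circ a_{\et}^{\PST}\cong a_{\et}^{\Psh}\circ o_{\tr}$ and hence preservation of $\et$-weak equivalences.

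The gap is in the sentence ``the $(\et,\B^1)$-fibrant replacement functor $L$, constructed by a transfinite alternation of $\Sing^{\B^1}$ and $\et$-fibrant replacement, satisfies $o_{\tr}\circ L\cong L\circ o_{\tr}$.'' What you have proved in (b) is that $o_{\tr}$ commutes with \emph{sheafification} $a_{\et}$, not with an \emph{$\et$-fibrant replacement} in the local projective model structure. An $\et$-fibrant replacement is strictly more than $a_{\et}$: it involves a resolution realizing $\et$-hyperdescent (injective, Godement, or \v{C}ech--type), and there is no reason such a construction should commute with $o_{\tr}$ on the nose merely because $a_{\et}$ does. So the claimed isomorphism $o_{\tr}L\cong Lo_{\tr}$ does not follow from (b) and (c) as stated. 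Moreover, that the transfinite alternation is an $(\et,\B^1)$-fibrant replacement at all is a Morel--Voevodsky--type localization theorem in this $\B^1$/$\et$ setting, which you invoke without justification; it is roughly as deep as the lemma you are proving.

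To repair the argument along your lines, you should replace ``$\et$-fibrant replacement'' by $a_{\et}$ alone in the alternation (so that the commutation $o_{\tr}L\cong Lo_{\tr}$ really does follow from (b) and (c), using that $o_{\tr}$ preserves filtered colimits since it admits a right adjoint), and then argue separately that this $L$ produces $(\et,\B^1)$-local objects --- a point where the $\Q$-algebra hypothesis on $\Lambda$ and the finite $\et$-cohomological dimension of affinoids enter (compare the use of these hypotheses in the proof of Lemma~\ref{DA=limDA}, Step~5). Alternatively, sidestep the explicit replacement functor entirely: since $o_{\tr}$ commutes with all colimits and preserves $\et$-weak equivalences, and since $o_{\tr}(\Lambda_{\tr}(\B^1_X)\to\Lambda_{\tr}(X))$ is a $(\et,\B^1)$-weak equivalence (via an explicit $\B^1$-homotopy on finite correspondences), one can invoke an abstract criterion of the type used elsewhere in the paper (cf.\ the appeals to \cite[Proposition~4.2.74]{ayoub-th2} in Proposition~\ref{Ri*} and Lemma~\ref{B.7}), provided one checks that it applies to $o_{\tr}$, which is a right Quillen functor rather than a left one.
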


\begin{proof}
	The argument of \cite[Lemma 2.111]{ayoub-h1} easily generalizes to our context. We point out that in the proof, the  the class of \emph{injective} trivial cofibrations in the category of complexes of presheaves is used (see Remark \ref{modelcatch}).
\end{proof}

The fourth step is just an easy check, as the next proposition shows.

\begin{prop} Let $B$ be a  normal variety over $K$. 
	The composite functor $\LL j^*\circ\LL a_{\tr}$ is isomorphic to $\LL i^*$. In particular $\LL a_{\tr}$ is  fully faithful.
\end{prop}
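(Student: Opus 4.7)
The plan is to establish the natural isomorphism $\LL j^*\circ\LL a_{\tr}\cong\LL i^*$ by direct computation on the compact generators $\Lambda(X,-n)$ of $\RigDA^{\eff}_{\Frobet}(B^{\Perf})$, and then to read off the full faithfulness of $\LL a_{\tr}$ from the full faithfulness of $\LL i^*$ and $\LL j^*$ established in the previous steps.

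First I would evaluate both sides on a representable. By the construction of $\LL a_{\tr}$ in Proposition \ref{step1a}, one has $\LL a_{\tr}(\Lambda(X,-n))=\Lambda_{\tr}(X)$, and by the very definition of $j\colon\RigCor/B^{\Perf}\to\Sh_{\fh}(\RigNor/B)$ the object $\LL j^*(\Lambda_{\tr}(X))$ is the $\fh$-sheaf represented by $X\in\RigNor/B$. On the other hand, the continuous functor $i_B\colon\cat_B\to\RigNor/B$ sends $(X,-n)$ to $X$, so $\LL i^*(\Lambda(X,-n))$ is the presheaf represented by $X$ on $\RigNor/B$; once it is pushed into the $\fh$-local subcategory of $\catD_{\Frobet,\B^1}(\RigNor/B)$ via Proposition \ref{compcor} and Proposition \ref{arefhlocal}, it is identified with the same $\fh$-sheaf $a_{\fh}\Lambda(X)$. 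Since representables are projectively cofibrant in both $\Psh(\RigSm/B^{\Perf})$ and $\PST(\RigSm/B^{\Perf})$, this agreement of underived outputs lifts verbatim to the derived level. The conceptual content is the commutativity, on representables, of the square $(i_B^*,a_{\fh})\cong (a_{\tr},j)$, which is immediate from the fact that morphisms in $\RigCor/B^{\Perf}$ are by definition computed in $\Sh_{\fh}(\RigNor/B)$.

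Second, I would extend the agreement from generators to arbitrary objects. The functors $\LL j^*\circ\LL a_{\tr}$ and $\LL i^*$ are triangulated and commute with small sums, being compositions of derived left adjoints. Since the objects $\Lambda(X,-n)[i]$ form a set of compact generators of $\RigDA^{\eff}_{\Frobet}(B^{\Perf})$, a natural isomorphism on this set propagates uniquely to an isomorphism of triangulated functors on the whole source category.

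Finally, combining this with the full faithfulness of $\LL i^*$ (corollary to Proposition \ref{arefhlocal}) and of $\LL j^*$ (preceding proposition), for any $M,N\in\RigDA^{\eff}_{\Frobet}(B^{\Perf})$ one obtains natural isomorphisms
\[
\Hom(M,N)\cong\Hom(\LL i^*M,\LL i^*N)\cong\Hom(\LL j^*\LL a_{\tr}M,\LL j^*\LL a_{\tr}N)\cong\Hom(\LL a_{\tr}M,\LL a_{\tr}N),
\]
the last one by full faithfulness of $\LL j^*$. This is precisely full faithfulness of $\LL a_{\tr}$, which, together with the generator-preservation of Proposition \ref{step1a}, yields the theorem. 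The only delicate point in the argument is the bookkeeping of the identification on representables, where one must ensure that the two derived-functor recipes produce genuinely the same $\fh$-sheaf and that the comparison is natural in $X$; everything else is formal.
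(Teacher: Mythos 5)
Your computation on representables is correct, but you take a different route from the paper and incur an extra step that you acknowledge but don't fully close. The paper verifies that the \emph{underived} square
\[
\xymatrix{
\Psh(\RigSm/B^{\Perf} )\ar[d]^{i}\ar[r]^{a_{\tr}}    &     \PST(\RigSm/B^{\Perf} )\ar[d]^{j}	\\
\Sh_{\Frob}(\RigNor/B )\ar[r]^{a_{\fh}}		&   \Sh_{\fh}(\RigNor/B )
}
\]
commutes \emph{as a square of functors}, by observing that the two composite right adjoints are $o_{\tr}j_*$ and $i_*o_{\fh}$, and these were already identified earlier (in the proof that $\LL j^*$ is fully faithful). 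Once the underived square commutes, deriving the left adjoints directly gives the natural isomorphism $\LL j^*\LL a_{\tr}\cong\LL i^*$ with no appeal to compact generation. Your approach instead evaluates the derived functors on representables and then tries to propagate from a set of compact generators; this is valid provided the agreement on representables is \emph{natural} in $X$ — which is exactly the datum you'd get for free from checking the square at the functor level — and provided the extension from a natural isomorphism on generators to one on the whole category is justified (for triangulated functors commuting with sums this requires some care and is more than just objectwise agreement on generators). You flag this bookkeeping as the delicate point, correctly; the paper's right-adjoint argument is precisely the clean way to dispose of it. In short, both routes lead to the same conclusion, but the paper's check on right adjoints is sharper and avoids the generator-propagation step entirely.
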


\begin{proof}
	It suffices to check that the following square is quasi-commutative.
	$$\xymatrix{
		\Psh(\RigSm/B^{\Perf} )\ar[d]^{i}\ar[r]^{a_{\tr}}    &     \PST(\RigSm/B^{\Perf} )\ar[d]^{j}	\\
		\Sh_{\Frob}(\RigNor/B )\ar[r]^{a_{\fh}}		&   \Sh_{\fh}(\RigNor/B )
	}$$
	
	This can be done by inspecting the two composite right adjoints, which are canonically isomorphic.
\end{proof}

This also  ends the proof of Theorem \ref{DA=DMp-app}.

We remark that in case $K$ is endowed with the trivial norm, we obtain a result on the category of motives  constructed from  schemes over $K$. 
It is the natural generalization of \cite[Theorem B.1]{ayoub-h1} in positive characteristic. We recall that the ring of coefficients $\Lambda$ is assumed to be a $\Q$-algebra.

\begin{thm}\label{DA=DMp-standard}
	Let $B$ be a normal algebraic variety over a perfect field $K$. The functor $a_{tr}$ induces an equivalence of triangulated categories:
	\[
	{\LL a_{tr}}\colon{\DA^{\eff}_{\Frobet}}(B^{\Perf} )\cong{\DM^{\eff}_{\et}}(B^{\Perf} ).
	\]
\end{thm}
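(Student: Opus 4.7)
The plan is to deduce this algebraic version directly from Theorem \ref{DA=DMp-app} by equipping the perfect field $K$ with the trivial non-archimedean norm. Under this trivial valuation $K$ satisfies the standing hypothesis of the paper (perfect and complete), so Theorem \ref{DA=DMp-app} applies. The key observation is that with trivial valuation the Tate algebras collapse onto polynomial algebras, $K\langle\chi_1,\dots,\chi_n\rangle=K[\chi_1,\dots,\chi_n]$, so affinoid rigid analytic varieties over $K$ are exactly the affine schemes of finite type over $K$. Under this identification smoothness, normality, the \'etale topology, finite universal homeomorphisms (hence $\Frob$-covers), pseudo-Galois covers, the relative Frobenius, the perfection $B^{\Perf}$, and the interval object (the closed disc $\B^1=\Spa K\langle\chi\rangle$ corresponds precisely to the affine line $\A^1=\Spec K[\chi]$) all transport faithfully between the two worlds.

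Consequently, for an affine normal variety $B$ over $K$, the sites $(\RigSm/B^{\Perf},\Frobet,\B^1)$ and $(\Sm/B^{\Perf},\Frobet,\A^1)$ are equivalent, and the same holds after adjoining finite correspondences since these are defined purely in terms of $\fh$-sheaves on normal varieties, which also match on both sides. Therefore the model categories $\Ch_{\Frobet,\B^1}\Psh$ and $\Ch_{\et,\B^1}\PST$ of the two frameworks coincide, giving canonical identifications
\[
\DA^{\eff}_{\Frobet}(B^{\Perf})\cong\RigDA^{\eff}_{\Frobet}(B^{\Perf}),\qquad \DM^{\eff}_{\et}(B^{\Perf})\cong\RigDM^{\eff}_{\et}(B^{\Perf}),
\]
intertwining the two $a_{\tr}$ functors. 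The statement for affine $B$ then follows immediately from Theorem \ref{DA=DMp-app}.

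Finally, to pass from affine $B$ to an arbitrary normal algebraic variety $B$, one invokes the same remark used in the rigid setting: both sides satisfy Nisnevich (in fact \'etale) descent in $B$, so one glues the equivalence along a Zariski cover of $B$ by affine opens. More formally, one uses the functoriality from Proposition \ref{quillenpairs} (which carries over verbatim to the algebraic case) to cover $B$ by affine opens $U_i$ and reduces the equivalence on $B$ to the equivalence on each $U_i$ and on the intersections $U_i\cap U_j$ via the \v{C}ech spectral sequence. The main obstacle in this reduction is essentially cosmetic: verifying that all the constructions used in Section \ref{frobmot} and Section \ref{DADMsec} (the $\Frob$-topology, calculus of fractions, the locality axiom of Theorem \ref{locality}, the pseudo-Galois description of $\fh$-sheaves of Remark \ref{pseudoG}, and the critical computation in Proposition \ref{arefhlocal}) go through in the algebraic setting. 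Each of these is either a well-known classical fact for schemes or is a direct translation of the corresponding rigid statement, and no new geometric input is required beyond what Theorem \ref{DA=DMp-app} already supplies.
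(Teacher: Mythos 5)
Your proof is correct and follows the same route as the paper: the paper itself treats Theorem \ref{DA=DMp-standard} as an immediate corollary of Theorem \ref{DA=DMp-app}, obtained by equipping the perfect field $K$ with the trivial non-archimedean valuation, under which Tate algebras become polynomial algebras and affinoid rigid analytic varieties become affine $K$-schemes of finite type. Your write-up makes explicit the identifications (of topologies, Frobenius covers, interval objects, correspondences, and the reduction to affine $B$) that the paper leaves implicit in its one-sentence remark preceding the theorem.
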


We now define the stable version of the categories of motives introduced so far, and remark that Theorem \ref{DA=DM-app} extends formally to the stable case providing a generalization of the result \cite[Theorem 15.2.16]{cd}. 

\begin{dfn}
	We denote by $\RigDA_{\Frobet}(B^{\Perf} )$ [resp. by $\RigDM_{\et}(B^{\Perf} )$] the homotopy category associated to the model category of symmetric spectra (see \cite[Section 4.3.2]{ayoub-th2}) 
	$ \Sp^\Sigma_{T}\Ch_{\Frobet,\B^1}\Psh(\RigSm/B^{\Perf})$ [resp.  $ \Sp^\Sigma_{T}\Ch_{\et,\B^1}\PST(\RigSm/B^{\Perf})$] where $T$ is the cokernel of the unit map $\Lambda(B)\ra\Lambda(\T^1_B)$ [resp $\Lambda_{\tr}(B)\ra\Lambda_{\tr}(\T^1_B)$]. 
\end{dfn}

\begin{cor}\label{DA=DMp-appstab}
	Let $B$ be a normal  variety over $K$. The functor $a_{tr}$ induces an equivalence of triangulated categories:
	\[
	{\LL a_{tr}}\colon{\RigDA_{\Frobet}}(B^{\Perf} )\cong{\RigDM_{\et}}(B^{\Perf} ).
	\]
\end{cor}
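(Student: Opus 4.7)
The plan is to deduce the stable equivalence from the effective one (Theorem \ref{DA=DMp-app}) by invoking the general principle that stabilization via symmetric spectra preserves Quillen equivalences, provided the functor is monoidal and sends the stabilizing object on one side to the stabilizing object on the other.

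First I would check that the Quillen pair $(a_{\tr}, o_{\tr})$ between $\Ch_{\Frobet,\B^1}\Psh(\RigSm/B^{\Perf})$ and $\Ch_{\et,\B^1}\PST(\RigSm/B^{\Perf})$ is a symmetric monoidal Quillen adjunction, with $a_{\tr}$ strong monoidal. This is immediate from the definition of $a_{\tr}$ on representables: $a_{\tr}(\Lambda(X)\otimes\Lambda(Y)) = a_{\tr}(\Lambda(X\times_B Y)) = \Lambda_{\tr}(X\times_B Y) = \Lambda_{\tr}(X)\otimes\Lambda_{\tr}(Y)$. In particular, the image under $a_{\tr}$ of the stabilizing object $T = \mathrm{coker}(\Lambda(B)\to\Lambda(\T^1_B))$ on the $\RigDA$-side coincides with the stabilizing object $\mathrm{coker}(\Lambda_{\tr}(B)\to\Lambda_{\tr}(\T^1_B))$ on the $\RigDM$-side.

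Next, I would apply the general machinery for symmetric spectra over monoidal model categories developed in \cite[Section 4.3]{ayoub-th2} (in particular \cite[Proposition 4.3.35 and Theorem 4.3.76]{ayoub-th2}, or equivalently Hovey's results on stabilization): a monoidal Quillen equivalence between suitably nice symmetric monoidal model categories whose source and target send the stabilizing object to a cofibrant object weakly equivalent to the stabilizing object on the other side induces, by levelwise application, a Quillen equivalence on the associated categories of symmetric $T$-spectra. The relevant technical hypotheses (left properness, cellularity, symmetry of the cyclic permutation on $T^{\otimes 3}$) hold by the same arguments as in \cite{ayoub-th2}, since the model structures on $\Ch_{\Frobet,\B^1}\Psh(\RigSm/B^{\Perf})$ and $\Ch_{\et,\B^1}\PST(\RigSm/B^{\Perf})$ inherit them from the underlying projective model structures, and the Tate object $T$ is cofibrant in both.

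Combining these observations, the induced derived functor
\[
\LL a_{\tr}\colon \Ho(\Sp^{\Sigma}_T\Ch_{\Frobet,\B^1}\Psh(\RigSm/B^{\Perf}))\longrightarrow\Ho(\Sp^{\Sigma}_T\Ch_{\et,\B^1}\PST(\RigSm/B^{\Perf}))
\]
is an equivalence, which is exactly the statement of the corollary. The only genuinely delicate point I anticipate is verifying that the cyclic permutation on $T^{\otimes 3}$ is homotopic to the identity in both model categories (so that the passage from $T$-spectra to symmetric $T$-spectra does not lose information); this, however, is standard in the motivic setting and follows as in \cite[Section 4.5.2]{ayoub-th2} using the $\B^1$-homotopy equivalence relating the relevant permutation on $(\T^1)^{\wedge 3}$ to the identity, and is preserved by $a_{\tr}$ since the latter is symmetric monoidal.
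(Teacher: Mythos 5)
Your proposal is correct and follows essentially the same route as the paper: deduce the stable equivalence from the effective Quillen equivalence of Theorem \ref{DA=DMp-app} by passing to symmetric $T$-spectra and invoking the stabilization result \cite[Proposition 4.3.35]{ayoub-th2}, together with the observation that $a_{\tr}$ is monoidal and carries the Tate object $T$ to the transfer-side Tate object. The paper states this in two lines with a single citation, whereas you additionally spell out the monoidality check and flag the cyclic-permutation condition on $T^{\otimes 3}$ (which is not strictly needed here, since both source and target are already categories of \emph{symmetric} spectra), but this is elaboration rather than a different argument.
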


\begin{proof}
	Theorem \ref{DA=DM-app} states that the adjunction
	\[
	\adj{a_{tr}}{\Ch_{\Frobet,\B^1}\Psh(\RigSm/B^{\Perf})}{\Ch_{\Frobet,\B^1}\PST(\RigSm/B^{\Perf})}{o_{\tr}}
	\]
	is a Quillen equivalence. It therefore induces a Quillen equivalence on the categories of symmetric spectra
	\[
	\adj{a_{tr}}{\Sp^\Sigma_{T}\Ch_{\Frobet,\B^1}\Psh(\RigSm/B^{\Perf})}{\Sp^\Sigma_{T}\Ch_{\Frobet,\B^1}\PST(\RigSm/B^{\Perf})}{o_{\tr}}
	\]
	by means of \cite[Proposition 4.3.35]{ayoub-th2}.
\end{proof}

We now assume that $\Lambda$ equals $\Z$ if $\car K=0$ and equals $\Z[1/p]$ if $\car K=p$. 
In analogy with  the statement   $\DA_{\et}(B,\Lambda)\cong\DM_{\et}(B,\Lambda)$ proved for motives associated to schemes (see  \cite[Appendix B]{ayoub-etale}) it is expected that the following result also holds.

\begin{conj}
	Let $B$ be a  normal variety over $K$. The functors $(a_{tr}, o_{tr})$ induce an equivalence of triangulated categories:
	\[
	{\LL a_{tr}}\colon{\RigDA_{\et}}(B,\Lambda )\cong{\RigDM_{\et}}(B,\Lambda ).
	\]
\end{conj}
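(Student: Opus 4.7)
The plan is to transport the strategy of Ayoub \cite[Theorem B.1]{ayoub-etale} from the algebraic to the rigid-analytic setting. The coefficient ring $\Lambda$ fits into an arithmetic fracture square
\[
\Lambda \ra \Q \times \prod_{\ell \neq p} \Z_\ell,
\]
so it suffices to prove the equivalence after tensoring with $\Q$ and, separately, after tensoring with $\Z/\ell^n$ for every prime $\ell \neq p$ and every $n \geq 1$. Both $\RigDA_{\et}(B,\Lambda)$ and $\RigDM_{\et}(B,\Lambda)$ are compactly generated, and the comparison functor $\LL a_{tr}$ preserves compact generators (the stable analogue of Proposition \ref{step1a}), so the reassembly of the pieces into a single equivalence is formal.

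The rational piece will come from Corollary \ref{DA=DMp-appstab}, which provides $\RigDA_{\Frobet}(B^{\Perf},\Q) \cong \RigDM_{\et}(B^{\Perf},\Q)$. To deduce $\RigDA_{\et}(B,\Q) \cong \RigDM_{\et}(B,\Q)$ from this, one needs the natural restriction functors
\[
\RigDA_{\et}(B,\Q) \ra \RigDA_{\Frobet}(B^{\Perf},\Q), \qquad \RigDM_{\et}(B,\Q) \ra \RigDM_{\et}(B^{\Perf},\Q)
\]
to be equivalences. With $\Q$-coefficients the relative Frobenius becomes invertible, because a $\Frob$-cover is a finite universal homeomorphism of degree a power of $p$ and the associated transfer divided by this degree provides a left inverse to restriction; this already implies $\RigDA_{\et}(B^{\Perf},\Q) \cong \RigDA_{\Frobet}(B^{\Perf},\Q)$. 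In $\RigDM_{\et}$ the transfers are already present and the inverse comes for free. Continuity along the tower $\{B^{(-n)}\}$, in the spirit of Lemma \ref{DA=limDA}, then handles the descent from $B^{\Perf}$ to $B$.

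For the $\ell$-torsion piece with $\ell \neq p$, the plan is to establish a rigid-analytic Suslin-Voevodsky-Ayoub \emph{étale rigidity} theorem: the composite
\[
\RigDA_{\et}(B,\Z/\ell^n) \ra \RigDM_{\et}(B,\Z/\ell^n) \ra \catD_{\et}(B_{\et},\Z/\ell^n),
\]
with the second arrow given by restriction to the small étale site, should be an equivalence on compact objects. The main geometric input is the $\B^1$-invariance of $\ell$-torsion étale cohomology of smooth rigid varieties, which is a theorem of Berkovich-Huber; combined with proper base change and standard trace-transfer arguments, this should force both $a_{tr}$ and the fibre functor to the small étale site to be equivalences with finite coefficients prime to $p$. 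Taking homotopy limits in $n$ then yields the equivalence with $\Z_\ell$-coefficients, providing the second input for the fracture square.

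The main obstacle is clearly the rigid-analytic rigidity theorem, and especially its \emph{transfer-compatible} formulation. The version for $\RigDA_{\et}$ alone is within reach through work of Bachmann and Ayoub-Vezzani, but the comparison with $\RigDM_{\et}$ requires a careful analysis of trace maps on finite surjective correspondences between normal rigid varieties and of their effect on étale cohomology with torsion coefficients prime to the residue characteristic. A secondary technical difficulty is promoting Frobenius invertibility from $\Q$- to $\Z[1/p]$-coefficients, which should again reduce to trace and continuity arguments but needs to be checked carefully in the limit category $\RigSm/B^{\Perf}$.
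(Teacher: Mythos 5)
The statement you are trying to prove is labelled a \emph{Conjecture} in the paper, and the paper does not prove it. After stating it, the author only remarks that, ``using the techniques developed in \cite{ayoub-etale}, it would suffice to show'' two formal properties of the $2$-functor $\RigDA_{\et}$: \emph{separateness} (that $\LL f^*$ is an equivalence for any $\Frob$-cover $f\colon B'\ra B$) and \emph{rigidity} (that $\catD\Sh_{\et}(\Et/B,\Z/N\Z)\ra\RigDA_{\et}(B,\Z/N\Z)$ is an equivalence when $\car K\nmid N$). Both are left open. So there is no proof in the paper to compare you against; the task was to recognize that there is no proof yet.

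Your proposal reproduces, in slightly different dress, exactly the two ingredients the paper names: your ``rational piece'' (Frobenius invertibility in $\RigDA_{\et}(-,\Q)$ via trace maps, plus continuity along $\{B^{(-n)}\}$) is precisely separateness, and your ``$\ell$-torsion piece'' (a transfer-compatible rigid-analytic rigidity theorem identifying $\RigDA_{\et}(B,\Z/\ell^n)$ with the derived category of the small \'etale site) is precisely rigidity. The arithmetic fracture square and the compact-generation bookkeeping linking the two pieces are standard and are implicitly what ``the techniques of \cite{ayoub-etale}'' refers to. To your credit, you correctly flag that neither ingredient is established: the rigidity theorem for $\RigDA_{\et}$, and especially its compatibility with finite transfers, is the genuine hard point; and the rational argument secretly needs an intrinsic transfer (trace) structure on $\RigDA_{\et}(-,\Q)$ along finite surjective maps between normal rigid varieties, which is not available ``for free'' the way it is in $\RigDM_{\et}$. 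Your writeup is thus an accurate and well-organized problem reduction, matching the paper's own sketch, but it is not a proof and was not claimed as one by either you or the paper. If you want to turn it into a proof, the two concrete things to supply are: (i) a construction of traces/transfers for finite surjective morphisms of normal rigid varieties in $\RigDA_{\et}$ with $\Q$-coefficients (or, equivalently, a rigid analogue of Morel's or Cisinski--D\'eglise's separateness argument), and (ii) a rigid-analytic Suslin--Voevodsky--Bachmann rigidity theorem for $\RigDA_{\et}$ with $\Z/\ell^n$-coefficients, $\ell\neq p$, together with the verification that $a_{\tr}$ and $o_{\tr}$ intertwine it with the corresponding statement for $\RigDM_{\et}$.
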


We remark that in the above statement differs from Corollary \ref{DA=DMp-appstab} for two main reasons: the ring of coefficients is   no longer assumed to be a $\Q$-algebra, and the class of maps with respect to which we localize are the $\et$-local maps and no longer the  $\Frobet$-local maps.

In order to reach this twofold generalization, using the techniques developed in 
\cite{ayoub-etale}, it would suffice to show the two following formal properties of the 2-functor $\RigDA_{\et}$:
\begin{itemize}
	\item \emph{Separateness}: for any $\Frob$-cover $B'\ra B$ the functor \[\RigDA_{\et}(B,\Lambda)\ra\RigDA_{\et}(B',\Lambda)\] is an equivalence of categories.
	\item \emph{Rigidity}: if $\car K\nmid N$ the functor \[\catD\Sh_{\et}(\Et/B,\Z/N\Z)\ra\RigDA_{\et}(B,\Z/N\Z)\] is an equivalence of categories,  where $\Et/B$ is the small \'etale site over $B$.
\end{itemize}

\section*{Acknowledgements}
This paper is part of my PhD thesis, 
carried out in a  co-tutelle program between the University of Milan and the University of Zurich. I am incredibly indebted to my advisor Professor Joseph Ayoub for having suggested me the main result of this paper, the outstanding amount of insight that he kindly shared with me, and his endless patience. I wish to express my gratitude to my co-advisor Professor Luca Barbieri Viale, for his invaluable encouragement and the numerous mathematical discussions throughout the development of my thesis. 
For the plentiful remarks   on preliminary versions of this work and his precious and friendly support, I also wish to thank Simon Pepin Lehalleur.

\bibliographystyle{plainnat}

 \end{document}